\newcommand{\dr}{\partial_R}
\newcommand{\ds}{\displaystyle}
\newcommand{\dz}{\partial_Z}
\newcommand{\dphi}{\partial_{\phi}}
\newcommand{\ephi}{\mathbf{e}_{\phi}}
\newcommand{\dt}{\partial_t}
\newcommand{\lp}{\triangle_{pol}}
\newcommand{\gradp}{\nabla_{pol}}
\newcommand{\gs}{\triangle^{*}}
\newcommand{\U}{\mathbf{U}}
\newcommand{\vv}{\mathbf{v}}
\newcommand{\vB}{\mathbf{B}}
\newcommand{\vJ}{\mathbf{J}}
\newcommand{\vE}{\mathbf{E}}
\newcommand{\rot}{\nabla \times}
\newcommand{\er}{\mathbf{e}_R}
\newcommand{\ez}{\mathbf{e}_Z}
\newcommand{\eps}{\varepsilon}
\newcommand\numberthis{\addtocounter{equation}{1}\tag{\theequation}}
\newtheorem{theorem}{Theorem}[section]
\newtheorem{lemma}[theorem]{Lemma}
\newtheorem{proposition}[theorem]{Proposition}
\newtheorem{corollary}[theorem]{Corollary}
\newenvironment{pr oof}[1][D\'emonstration]{\begin{trivlist}
\item[\hskip \labelsep {\bfseries #1}]}{\end{trivlist}}
\begin{document}

\title{Energy conservation and numerical stability for the reduced MHD models of the non-linear JOREK code}

\author{Emmanuel Franck\footnotemark[1] \footnotemark[2],
Matthias H\"{o}lzl\footnotemark[1], 
Alexander Lessig\footnotemark[1], 
Eric Sonnendr\"{u}cker\footnotemark[1]
}
 \footnotetext[1]{Max-Planck-Institut f\"ur Plasmaphysik, Boltzmannstra\ss e 2
D-85748 Garching, Germany.}
\footnotetext[2]{Email adress: emmanuel.franck@ipp.mpg.de}

\maketitle

\begin{abstract} 
In this paper we present a rigorous derivation of the reduced MHD models 
with and without parallel velocity that are implemented in the 
non-linear MHD code JOREK. The model we obtain contains some terms that 
have been neglected in the implementation but might be relevant in the 
non-linear phase. These are necessary to guarantee exact conservation 
with respect to the full MHD energy.

For the second part of this work, we have replaced the linearized time 
stepping of JOREK by a non-linear solver based on the Inexact Newton 
method including adaptive time stepping. We demonstrate that this 
approach is more robust especially with respect to numerical errors in 
the saturation phase of an instability and allows to use larger time 
steps in the non-linear phase.
\end{abstract}

\tableofcontents

\section{Physical context and resistive MHD}
\subsection{Physical context: ITER and ELM's simulations}
The aim of magnetic confinement fusion is to develop a power plant that 
gains energy from the fusion of deuterium and tritium in a magnetically confined plasma. ITER, a tokamak type fusion 
experiment currently being built in the South of France, is the next 
step towards this goal.

In order to achieve a reasonable lifetime of first wall materials in 
ITER and future fusion reactors, plasma instabilities like edge 
localized modes (ELMs) \cite{zohm,snyder} need to be well controlled. Numerical modelling 
can help to develop the necessary understanding of the relevant physical 
processes. A physical model well suited to describe those large scale 
instabilities is the set of magneto-hydrodynamic equations (MHD) or the 
simpler reduced MHD model.

\subsection{Resistive MHD}
We begin by introducing the resistive Magnetohydrodynamic (MHD) fluid system in 3D. The spatial variable is $\mathbf{x}\in\mathbb{R}^3$. We note $\rho$ the mass density of the plasma, $\mathbf{v}$ the velocity, $T$ the temperature, $p=\rho T$ the pressure, $\mathbf{B}$ the magnetic field, $\mathbf{J}$ the current and $\mathbf{E}$ the electric field. The evolution of the plasma can be described by the following MHD model
\begin{equation}\label{MHD}
\left\{\begin{array}{l}
\ds\dt \rho+\nabla\cdot(\rho\vv)=0\\
\\
\ds\rho\dt \vv+\rho\vv\cdot\nabla \vv+\nabla (p)=\vJ\times\vB+ \nabla \cdot(\nu \nabla \vv) \\
\\
\ds \dt p+\vv\cdot\nabla p+\gamma p\nabla\cdot \vv=0\\
\\
\ds\dt \vB=-\rot \vE=\rot (\vv\times\vB-\eta\vJ)\\
\\
\ds \nabla \times \mathbf{B} =\mathbf{J}\\
\\
\ds \nabla\cdot\mathbf{B}=0
\end{array}\right.
\end{equation}
with $\nu$ the viscosity coefficient, $\eta$ the resistivity coefficient. The resistive term originates from the collision between the two species electrons and ions present in the plasma. The viscosity term is a very simple approximation of the stress tensor. The resistive MHD model used here is a simplification of two fluids models (extended MHD). The numerical properties of extended MHD terms in JOREK are beyond the scope of this paper and will be investigated in a future publication.
First we recall the  energy conservation and dissipation properties of the resistive MHD model.
\begin{proposition}
The total energy of the MHD model is given by the sum of the kinetic energy, magnetic energy and internal energy:
$$
E=\rho \frac{|\vv|^2}{2}+\frac{|\vB|^2}{2}+\frac{1}{\gamma-1}p.
$$
with $p=\rho T$ and $\gamma =\frac53$. 
The balance law for the total energy is given by
$$
\dt E+\nabla \cdot\left[\vv \left(\rho\frac{|\vv|^2}{2}+\frac{\gamma}{\gamma-1}p\right)-(\vv\times\vB)\times\vB+\eta (\vJ\times \vB)\right]=-\eta \mid\vJ\mid^2+(\nabla \cdot(\nu \nabla \vv) )\cdot \vv)
$$
If $\vB=\vv=\mathbf{0}$ and $\rho=T=0$ on $\partial \Omega$ we obtain
$$
\frac{d}{dt}\int_{\Omega} E =-\eta \int_{\Omega} \mid \vJ^2\mid - \nu\int_{\Omega} \mid \mathbf{W}\mid ^2 - \nu \int_{\Omega} \mid \nabla \cdot \vv\mid ^2\leq 0
$$
with $\mathbf{W}=\nabla \times \vv$.
\end{proposition}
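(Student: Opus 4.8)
The plan is to establish the energy balance law by computing the time derivative of each of the three energy components separately, then combining them and using the MHD equations to cancel cross terms.

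First I would differentiate the kinetic energy density $\rho\frac{|\vv|^2}{2}$ in time, using the mass conservation equation $\dt\rho+\nabla\cdot(\rho\vv)=0$ and the momentum equation to replace $\rho\dt\vv$. This produces terms of the form $\vv\cdot(\vJ\times\vB)$, $-\vv\cdot\nabla p$, and the viscous contribution $\vv\cdot\nabla\cdot(\nu\nabla\vv)$, together with advective terms that should assemble into a divergence. Next I would differentiate the internal energy $\frac{1}{\gamma-1}p$ using the pressure equation $\dt p+\vv\cdot\nabla p+\gamma p\nabla\cdot\vv=0$; the key is to combine $\frac{1}{\gamma-1}(-\vv\cdot\nabla p-\gamma p\nabla\cdot\vv)$ with the $-\vv\cdot\nabla p$ term from the kinetic part so that the pressure contributions reorganize into the enthalpy flux $\nabla\cdot\!\left(\frac{\gamma}{\gamma-1}p\,\vv\right)$. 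Finally I would differentiate the magnetic energy $\frac{|\vB|^2}{2}$, writing $\dt\frac{|\vB|^2}{2}=\vB\cdot\dt\vB=\vB\cdot\rot(\vv\times\vB-\eta\vJ)$, and apply the vector identity $\vB\cdot\rot\vE=\nabla\cdot(\vE\times\vB)+\vE\cdot\rot\vB$ with $\rot\vB=\vJ$ to extract both the Poynting-type flux and the bulk terms $-(\vv\times\vB)\cdot\vJ+\eta|\vJ|^2$.

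The central cancellation I expect is between the Lorentz-force work $\vv\cdot(\vJ\times\vB)$ arising from the momentum equation and the induction term $-(\vv\times\vB)\cdot\vJ$ arising from Faraday's law; these are equal by the scalar triple product identity $\vv\cdot(\vJ\times\vB)=(\vv\times\vB)\cdot\vJ$, so they cancel exactly and leave $-\eta|\vJ|^2$ as the only magnetic bulk term, matching the right-hand side. Collecting the three divergence contributions then yields the stated flux vector, including the $-(\vv\times\vB)\times\vB+\eta(\vJ\times\vB)$ combination once $\vE\times\vB$ is expanded. For the integrated inequality, I would integrate the balance law over $\Omega$, discard the divergence term by the divergence theorem using the vanishing boundary data, and then handle the viscous volume term $\int_\Omega\vv\cdot\nabla\cdot(\nu\nabla\vv)$ by integration by parts, splitting $\nabla\vv$ into its rotational and dilatational parts to recover $-\nu\int_\Omega|\mathbf{W}|^2-\nu\int_\Omega|\nabla\cdot\vv|^2$ with $\mathbf{W}=\rot\vv$.

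The main obstacle will be the viscous term: the identity turning $\int_\Omega\vv\cdot\nabla\cdot(\nu\nabla\vv)$ into a clean sum of the squared vorticity and squared divergence requires the vector identity $\triangle\vv=\nabla(\nabla\cdot\vv)-\rot(\rot\vv)$ together with careful integration by parts, and it is only valid up to boundary terms that must be controlled by the hypothesis $\vv=\mathbf{0}$ on $\partial\Omega$. I would treat $\nu$ as constant here so that $\nabla\cdot(\nu\nabla\vv)=\nu\triangle\vv$; the bookkeeping of the surface integrals is where sign errors are most likely, so I would verify that every boundary contribution indeed vanishes under the stated conditions on $\vv$, $\vB$, $\rho$, and $T$ before asserting the final sign-definite inequality.
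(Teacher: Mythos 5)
Your proposal follows essentially the same route as the paper's proof: build the kinetic, internal, and magnetic energy balances from the mass/momentum, pressure, and induction equations, cancel the Lorentz work term against its counterpart from Faraday's law using the divergence identity $\nabla\cdot(\mathbf{a}\times\mathbf{b})=(\rot\mathbf{a})\cdot\mathbf{b}-\mathbf{a}\cdot(\rot\mathbf{b})$, extract the Joule term via $\vB\cdot\rot\vJ=\nabla\cdot(\vJ\times\vB)+|\vJ|^2$, and treat the viscous term with $\triangle\vv=\nabla(\nabla\cdot\vv)-\rot(\rot\vv)$ and integration by parts under the stated boundary conditions. One bookkeeping caution: the triple-product identity actually reads $\vv\cdot(\vJ\times\vB)=-(\vv\times\vB)\cdot\vJ$, and correspondingly the bulk terms produced by the induction equation are $(\vv\times\vB)\cdot\vJ-\eta|\vJ|^2$ rather than their negatives, so your two sign slips compensate and the cancellation leaving $-\eta|\vJ|^2$ does go through once the signs are tracked consistently.
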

\begin{proof}
We multiply the mass equation by $\frac{|\vv|^2}{2}$, the velocity equation by $\vv$, the pressure equation by $\frac{1}{\gamma-1}$ and the magnetic field equation by $\vB$. We obtain 
\begin{equation}\label{MHDGOLmul}
\left\{\begin{array}{l}
\ds \frac{|\vv|^2}{2}\left(\dt \rho+\nabla\cdot(\rho\vv)\right)=0\\
\\
\ds\vv \cdot \left(\rho\dt \vv+\rho\vv\cdot\nabla \vv+\nabla p-\vJ\times\vB- \nabla \cdot(\nu \nabla \vv) \right)=0 \\
\\
\ds \dt \frac{p}{\gamma -1}+\frac{1}{\gamma -1}\vv\cdot\nabla p+\frac{\gamma}{\gamma -1} p\nabla\cdot \vv=0\\
\\
\ds\vB\cdot\dt \vB=\vB \cdot\left(\nabla \times \left(\vv\times\vB-\eta\vJ\right)\right)\\
\\
\ds \nabla \times \mathbf{B} =\mathbf{J}\\
\\
\ds \nabla\cdot\mathbf{B}=0
\end{array}\right.
\end{equation}
First we multiply the velocity equation  by $\vv$ and the mass equation by $\frac{|\vv|^2}{2}$ to obtain the following equation on the kinetic energy
$$
\dt (\rho \vv)+\nabla \cdot\left(\rho |\vv|^2\vv\right)+\vv\cdot\nabla p=(\vJ\times\vB)\cdot\vv + \nabla \cdot(\nu \nabla \vv).
$$
Adding this equation to the pressure equation and the magnetic field equation multiplied by $\vB$, we obtain 
\begin{align*}
\dt E= &-\frac{|\vv|^2}{2}\nabla\cdot(\rho\vv)-\rho|\vv|^2\cdot\nabla \vv-\frac{\gamma}{\gamma -1}\vv\cdot\nabla p-\frac{\gamma}{\gamma -1}p
\nabla \cdot \vv+\vB(\nabla\times\cdot(\vv \times \vB))+\vv\cdot(\vJ\times\vB)\\
& -\eta (\nabla\times \vJ\cdot \vB)+( \nabla \cdot(\nu \nabla \vv)\cdot \vv).
\end{align*}
Rearranging the terms this becomes
\begin{multline*}
\dt E= -\nabla\cdot\left((\rho\frac{|\vv|^2}{2}+\frac{\gamma}{\gamma-1}p)\vv\right)+\nabla\cdot\left((\vv\times\vB)\times\vB\right)
-\eta (\nabla\times \vJ\cdot \vB)+( \nabla \cdot(\nu \nabla \vv)\cdot\vv).
\end{multline*}
To obtain this we have used $\nabla \cdot(\mathbf{a}\times\mathbf{b})=(\nabla\times\mathbf{a})\cdot\mathbf{b}-\mathbf{a}(\nabla\times\mathbf{b})$. 
Now we use 
$$
\nabla\times \vJ\cdot \vB=\nabla\cdot(\vJ\times \vB)+|\vJ|^2.
$$
To finish the proof we use the definition of the vector Laplacian 
$$ \nabla \cdot( \nabla \vv) =\triangle \vv = \nabla (\nabla \cdot \vv)-\nabla\times(\nabla\times \vv)$$  and an integration by parts.
\end{proof}
\begin{corollary}
If the resistivity and viscosity coefficients are equal to zero the total energy is conserved in time and otherwise it is dissipated in time.
\end{corollary}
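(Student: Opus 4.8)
The plan is to read off the result directly from the integrated energy balance established in the preceding Proposition, so no new computation is really needed. Under the stated boundary conditions $\vB=\vv=\mathbf{0}$ and $\rho=T=0$ on $\partial\Omega$, we have already shown that
$$
\frac{d}{dt}\int_{\Omega} E = -\eta \int_{\Omega} |\vJ|^2 - \nu\int_{\Omega} |\mathbf{W}|^2 - \nu \int_{\Omega} |\nabla \cdot \vv|^2,
$$
with $\mathbf{W}=\nabla\times\vv$. Everything then reduces to inspecting the sign of the right-hand side as a function of the transport coefficients.

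First I would treat the ideal case $\eta=\nu=0$. Substituting these values annihilates every term on the right-hand side, so $\frac{d}{dt}\int_\Omega E=0$; integrating in time yields $\int_\Omega E(t)=\int_\Omega E(0)$ for all $t$, which is exactly the conservation of total energy in the absence of dissipation.

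For the dissipative case I would observe that each of the three integrals $\int_\Omega |\vJ|^2$, $\int_\Omega |\mathbf{W}|^2$ and $\int_\Omega |\nabla\cdot\vv|^2$ is non-negative, being the integral of a square. Consequently, as long as $\eta\geq 0$ and $\nu\geq 0$ with at least one of them strictly positive, the right-hand side is $\leq 0$, so $t\mapsto\int_\Omega E$ is non-increasing and the total energy is dissipated in time.

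The argument presents no genuine obstacle: the substance of the work is already contained in the proof of the Proposition, and the corollary merely records that resistivity and viscosity enter the balance law only through manifestly non-negative quadratic terms weighted by $\eta$ and $\nu$. The single point deserving a word of care is that the boundary contributions produced by the integrations by parts — invoked to pass from the local balance law to the integrated identity — vanish, which is precisely what the homogeneous boundary conditions assumed in the Proposition guarantee.
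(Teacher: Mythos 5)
Your proposal is correct and follows essentially the same route as the paper, which justifies the corollary by invoking the divergence theorem and the homogeneous boundary conditions to reduce everything to the sign of the quadratic dissipation terms $-\eta\int_{\Omega}|\vJ|^2-\nu\int_{\Omega}|\mathbf{W}|^2-\nu\int_{\Omega}|\nabla\cdot\vv|^2$ in the integrated balance law of the Proposition. Your explicit remark that the vanishing of the boundary flux terms is exactly what the assumed boundary conditions guarantee is the same point the paper makes, just stated more carefully.
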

This result comes from the flux divergence theorem and the assumptions on the boundary conditions. Normally the dissipation introduced by the resistive and viscous terms is balanced by the viscous and the Ohmic heating to obtain at the end the conservation of the total energy. However it is classical to neglect these terms and work with the dissipative resistive MHD system. In the following, we will derive a reduced model with the same dissipative energy (or a really close energy). Indeed, energy conservation or dissipation is important for the numerical stability. 
\subsection{Reduced MHD models}
The reduced resistive MHD models are designed to reduce the CPU cost by making assumptions, which are reasonable for the tokamak configuration. Since the perturbation of the toroidal magnetic field is of second order (in terms of a small expansion parameter) and enters into the equation of motion only at third order, it can be neglected in the reduced MHD limit \cite{strauss} such that we take the toroidal magnetic field to be constant in time. The magnetic field $\vB$ can be split into two parts: the toroidal part $\vB_{\phi}$ and the poloidal part $\vB_{pol}$ given by
\begin{equation}\label{BBB}
\vB_{\phi}=\frac{F_0}{R}\ephi\mbox{ and  }\vB_{pol}=\frac{1}{R}\nabla \psi \times \ephi.
\end{equation}
The velocity field depends on the electrical potential in the poloidal plane and the parallel velocity (parallel to the magnetic field). It is given by
\begin{equation}\label{vvv}
\vv=\vv_{pol}+\vv_{||}=-R\nabla u \times \ephi+v_{||}\vB.
\end{equation}
This choice come from to the choice of the electrical potential $V=F_0 u$ with $E=\nabla V$ and the fact that the poloidal velocity is homogeneous to $\vE\times \vB$.
This potential formulation allows to reduce the number of variables and filter the fast magnetosonic waves of the MHD for nearly incompressible flows. The full MHD system with all waves is a very stiff problem with restrictive CFL stability conditions and bad conditioning for the numerical methods. Consequently eliminating these waves allows to obtain a less stiff problem, which is easier to solve. 
To obtain the final reduced models we plug the potential formulations in the full MHD model and use projections to obtain the equations on $u$ and $v_{||}$. For the equation on the electric potential  we project by applying the operator $\ephi\cdot\rot (R^2 ....)$ 
to the momentum equation. To obtain the equation on $v_{||}$ we project by applying the operator $\vB \cdot (...) $ 
to the momentum equation. 

One of the aims of this work is to derive exactly the reduced MHD model used in the JOREK  code and prove that this model satisfies the energy conservation law. Indeed the energy conservation is a very important property to ensure the numerical stability of the time evolution method for nonlinear models. 

\subsection{JOREK code}

The non-linear JOREK code was originally developed by Huysmans \cite{huymans,huymans2}, see also \cite{holzl,jorekphy1,jorekphy2,jorekphy3,jorekphy4,jorekphy5,jorekphy6,jorekphy7,jorekphy8,jorekphy9}, solves the reduced or full MHD equations in realistic 
three-dimensional tokamak geometry. The spatial discretization is performed  
by isoparametric B\'ezier finite elements in the poloidal plane and a 
toroidal Fourier decomposition. As a first step in a simulation, the 
Grad Shafranov equation given by
$$
\gs \psi = -R^2\frac{\partial p}{\partial \psi}-F\frac{\partial F}{\partial \psi}
$$
with $\gs \psi = R \frac{\partial }{\partial R}\left(\frac{1}{R}\frac{\partial \psi}{\partial R}\right)+\frac{\partial^2 \psi}{\partial Z^2}$, $F=R\vB_{\phi}$, $p$ the pressure and $\vB_{\phi}$ the toroidal magnetic field, 
is solved on an initial grid (Fig. 1, on the left) to 
calculate the plasma equilibrium and again on a grid aligned to the 
equilibrium magnetic flux surfaces (Fig. 1, on the right in blue). This second 
grid is used during the following time integration as well, in which the 
(reduced) MHD equations are solved by a fully implicit method 
(Crank-Nicholson or Gear scheme). The resulting large sparse matrix system is 
solved using the iterative GMRES method with a physics-based 
preconditioning during which the direct sparse matrix solver Pastix is 
employed. JOREK is implemented in Fortran 90/95 and uses a hybrid MPI 
plus OpenMP parallelization suitable for large scale simulations on supercomputers. The realistic treatment of the tokamak geometry 
including the plasma region, separatrix and X-point, as well as 
scrape-off layer and divertor region makes the code suitable for 
simulations of many different types of plasma instabilities.
\begin{figure}[t]
\begin{center}
    \includegraphics[scale=.45]{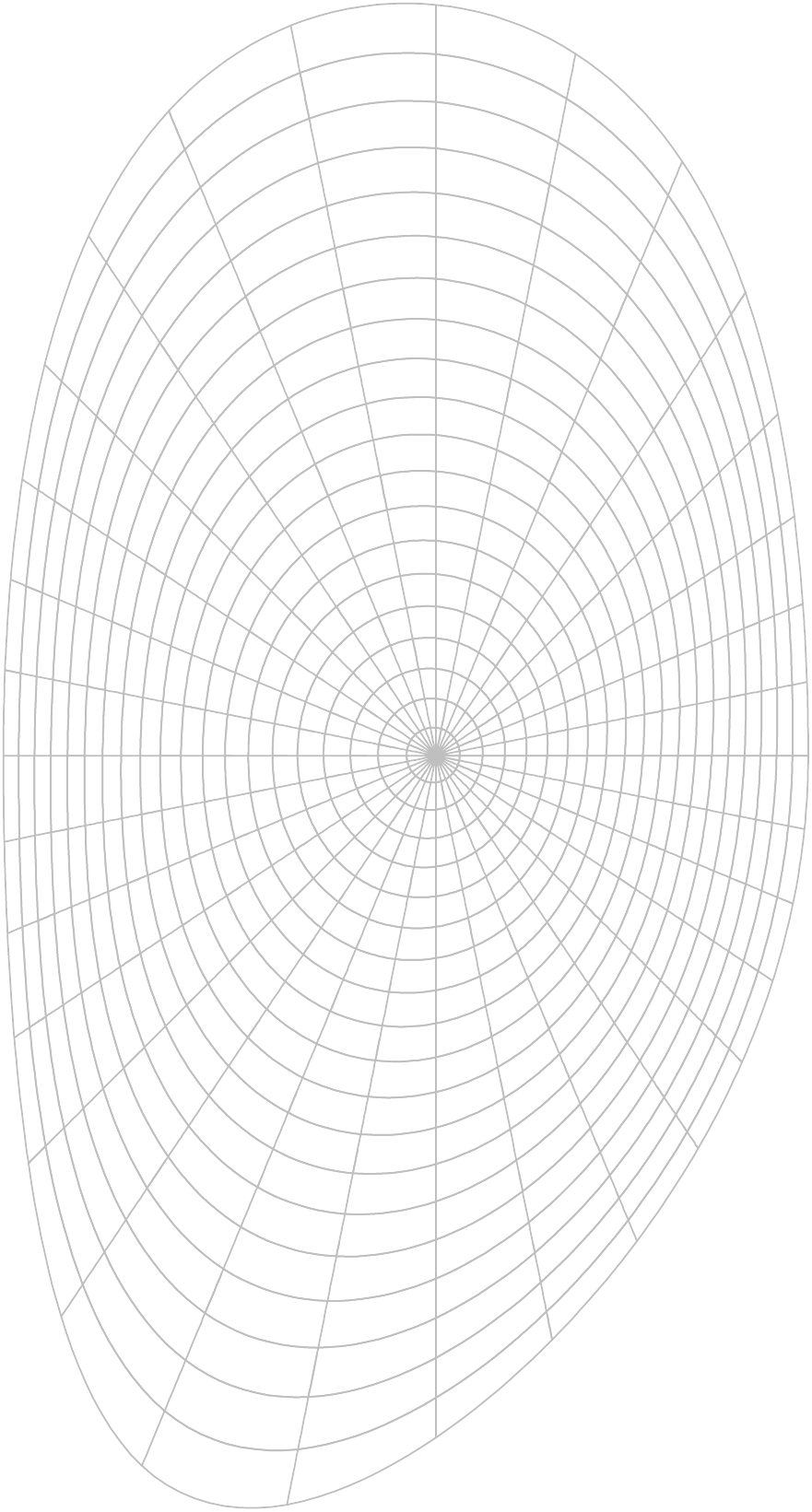}\hspace{60pt}\includegraphics[scale=.45]{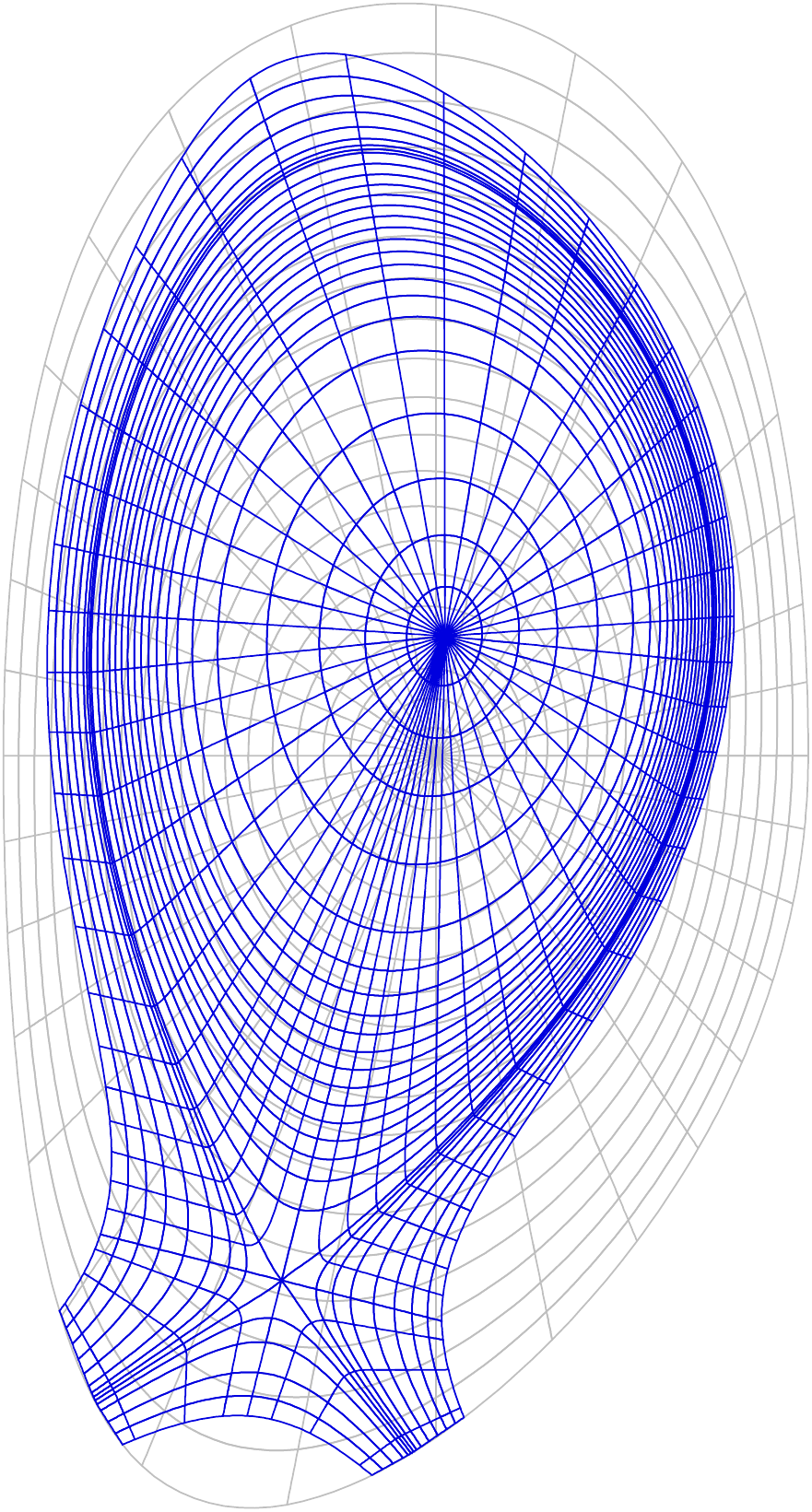}
        \caption{Initial grid (grey) and flux aligned grid (blue)  used in JOREK X-point simulations  (shown with reduced resolutions).}
\end{center}
\end{figure}
~\\
~\\
In the following we will provide a rigorous algebraic derivation of the 
reduced MHD equations that are implemented in JOREK from the full MHD 
equations (Sections 2.1--2.3) and investigate the energy conservation 
properties of this reduced MHD model (Section 2.4). In Section 3, we will 
 introduce a non-linear time integrator based on 
inexact Newton iterations for JOREK in order to increase the robustness 
and performance of the code in highly non-linear stages. Numerical tests 
of the non-linear time stepping scheme are presented in Section 4 and brief 
conclusions of the work are provided in Section 5.

\section{Derivation of the models}

The derivation of  reduced MHD models is not a new research topic. We can find some derivations of models with parallel velocity for small curvature in the Tokamak in \cite{strauss,kruger2}. These derivations are based on an asymptotic analysis with the small parameter $\eps$ which corresponds to the curvature of the geometry. In these calculations, some terms are neglected in the final models. In our case we use an algebraic derivation. Using the same assumptions for the magnetic field and the velocity field as in \cite{strauss,kruger2}.
The same method and the same type of calculation can be found in the works of R. Sart and B. Despr\'es in \cite{redMHD,redMHD2,rmhdinsta}. In these papers the authors propose two methods to obtain the reduced MHD in the low $\beta$ case, where $\beta$ is the ratio between plasma and magnetic pressures (which correspond to $p<< |\vB_{\phi}|^2$) for general density profiles. In this work we use the same technique as in their first paper, but we apply this method to obtain the more complicated models,  which are actually implemented in the JOREK code. 
So far, no exact derivation for the reduced MHD models implemented in JOREK had been published. For this reason we give these proofs and identify previously neglected terms in the reduced MHD physics models.
\subsection{Notation}
The fundamental coordinate system used in JOREK is the cylindrical system $(R,\phi,Z)$ illustrated in Fig. 2. The connection to cartesian coordinates is given by
\begin{equation}\label{coordinates}
\left\{\begin{array}{l}
X=R\cos\phi\\
Y=-R\sin\phi\\
Z=Z
\end{array}\right.
\end{equation}
We define $\er=\nabla R$, $\frac{1}{R}\ephi=\nabla \phi$ and $\ez=\nabla Z$ with $R$, $\phi$, $Z$ functions of ($X$,$Y$,$Z$). By definition of the basis we have $\er\times\ephi=-\ez$, $\ephi\times\ez=-\er$ and  $\ez\times\er=-\ephi$.\\
\begin{figure}[t]
\begin{center}
    \includegraphics[scale=.35]{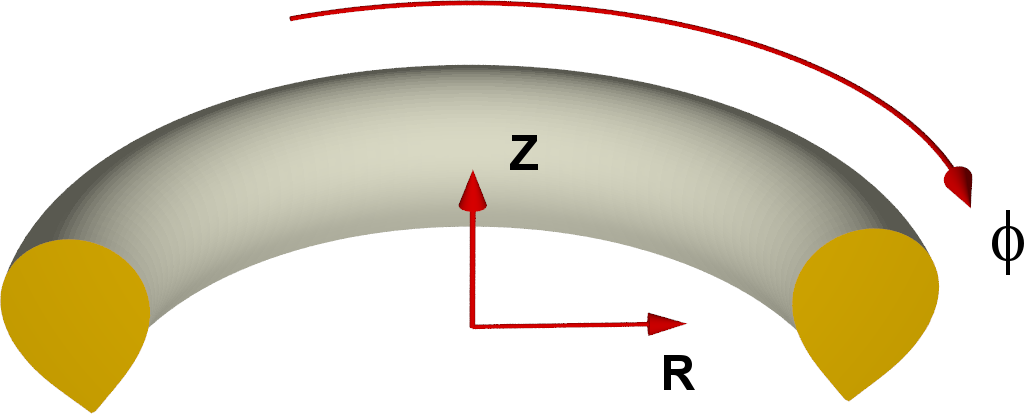}
        \caption{Illustration of the cylindrical coordinate system used in JOREK.}
\end{center}
\end{figure}

The domain is defined by $(R,Z,\phi)\in\Omega=D\times [0,2\pi[$. 
To finish we define the different differential operators used for the calculation:
$$
\nabla f=\dr (f)\er+\frac{1}{R}\dphi (f)\ephi+ \dz (f)\ez
$$
$$
\nabla_{pol} f=\dr (f)\er+ \dz (f)\ez
$$
$$
\nabla\cdot \mathbf{f}=\frac{1}{R}\dr (R f_R)+\frac{1}{R}\dphi (f_{\phi})+ \dz (f_Z)
$$
$$
\nabla\times \mathbf{f}=\left(\frac{1}{R}\dphi f_Z-\dz f_{\phi}\right)\er+\left(\dz f_R-\dr f_{Z}\right)\ephi+\frac{1}{R}\left(\dr(R f_{\phi})-\dphi f_{R}\right)\ez
$$
$$
\triangle^{*}f=R^2\nabla\cdot\left(\frac{1}{R^2}\nabla^{\perp}f\right)=R\dr \left(\frac{1}{R}\dr f\right)+\partial_{ZZ} f
$$
$$
\triangle_{pol}=\nabla \cdot (\nabla_{pol} f)=\frac{1}{R}\dr(R\dr f)+\partial_{ZZ} f
$$
$$
[a,b]=\ephi\cdot(\nabla a \times \nabla b)=\dr a\dz b-\dz a \dr b.
$$
The variables associated to the reduced MHD models are the poloidal magnetic flux $\psi$, the electrical potential $u$,  the density $\rho$, the temperature $T$  and the parallel velocity $v_{||}$. We introduce two additional variables: the toroidal current $j$ defined by $j=\gs \psi$ and the vorticity $w$ defined by $w=\lp u$. This procedure is used to break some high order operators into lower order ones.
For the integration we denote by $dW= R dR dZ$ the cylindrical measure and $dV=dR dZ$. When no measure is given explicitly, $dR dZ$ is used.

\subsection{Derivation of the model}
The starting point of our derivation is the the following resistive MHD model
\begin{equation}\label{MHDre1}
\left\{\begin{array}{l}
\ds \dt \rho+\nabla\cdot(\rho \vv)=0
\\
\ds\rho \dt \vv+\rho\vv\cdot\nabla \vv+\nabla p=\vJ\times\vB
\\
\ds \dt p+ \vv\cdot\nabla p+\gamma p \nabla \cdot\vv=0
\\
\ds\dt \vB=\rot (\vv\times\vB-\eta\vJ)\\
\end{array}\right.
\end{equation}
We do not treat the viscosity term in the following derivation, but discuss it briefly at the end of Section \ref{sec:final_model}.

\subsubsection{Magnetic poloidal flux equation}
We use the magnetic field $\vB=\vB_{\phi}+\vB_{pol}$ given by (\ref{BBB}). Since $\nabla \cdot \vB=0$ and $\dt F_0=0$ we get $\dt \vB=\dt\left(\frac{F_0}{R}\ephi+\nabla \times\left(\frac{1}{R}\psi\ephi\right)\right)=\nabla \times\left(\dt(\frac{1}{R}\psi\ephi)\right)$. Consequently the equation on the magnetic field in (\ref{MHDre1}) becomes
\begin{equation}\label{In1}
\ds\nabla \times\left(\dt(\frac{1}{R}\psi\ephi)\right)=\rot (\vv\times\vB-\eta\vJ)
\end{equation}
with $\vJ=\rot \vB$.
The equation becomes
\begin{equation}\label{In2}
\ds\dt\left(\frac{1}{R}\psi\ephi\right)=\vv\times\vB-\eta\vJ+\nabla V
\end{equation}
with $V$ a potential.
We begin by estimating the term $\vv\times\vB$. Since $\vB\times\vB=0$ we obtain $\vv\times\vB=\left(-R^2\nabla \times\left(\frac{1}{R}u\ephi\right)\right)\times\left(\frac{F_0}{R}\ephi+\nabla \times\left(\frac{1}{R}\psi\ephi\right)\right)$ which gives
\begin{align*}
\vv\times\vB & =\vv_{pol}\times\vB=\left(-R\dz u\er+R\dr u\ez\right)\times \left(\frac{F_0}{R}\ephi+\frac{1}{R}\dz\psi\er-\frac{1}{R}\dr\psi\ez\right)\\
& =F_0\left(\dz u\ez+\dr u\er\right)+[\psi,u]\ephi.
\end{align*}
Now we study the term $\vJ=\rot \vB$.
$$
\vJ=\rot(F_0\nabla \phi)+\rot(\nabla \psi\times \nabla \phi).
$$
Since $F_0$ is constant, using the properties of curl and gradient operators we have $\rot\nabla\phi=0$.  So
$$
\vJ=\rot(\nabla \psi\times \nabla \phi)=\rot\left(\frac{1}{R}\dz\psi\er-\frac{1}{R}\dr \psi \ez\right).
$$
Since $\rot\er=\rot \nabla R=0$ and $\rot\ez=\rot \nabla Z=0$ we have
$$
\vJ=\rot(\nabla \psi\times \nabla \phi)=\nabla\left(\frac{1}{R}\dz\psi\right)\times\er-\nabla\left(\frac{1}{R}\dr\psi\right)\times\ez.
$$
Therefore expanding the gradient for each component we obtain 
$$
\vJ=\rot(\nabla \psi\times \nabla \phi)=-\frac{1}{R}\partial_{ZZ}\psi\ephi+\frac{1}{R^2}\dphi(\dz\psi)\ez-\dr(\frac{1}{R}\dr\psi)\ephi+\frac{1}{R^2}\dphi(\dr\psi)\er,
$$
and using the definition of the Grad-Shafranov diffusion operator we have
$$
\rot(\nabla \psi\times \nabla \phi)=-\frac{1}{R}\triangle^{*}\psi\ephi+\frac{1}{R^2}\dphi(\dz\psi)\ez+\frac{1}{R^2}\dphi(\dr\psi)\er.
$$
We plug together all the terms to obtain
\begin{align*}
\ds\dt\left(\frac{1}{R}\psi\ephi\right) & = + F_0\left(\dz u\ez+\dr u\er\right)+[\psi,u]\ephi\\
&\quad -\eta\left[-\frac{1}{R}\triangle^{*}\psi\ephi+\frac{1}{R^2}\dphi(\dz\psi)\ez+\frac{1}{R^2}\dphi(\dr\psi)\er\right]+\nabla V.
\end{align*}
Now we multiply the previous equation by $\er$ and after by $\ez$ to obtain the expressions of the $R$ and $Z$ derivatives of V:
\begin{equation}\label{In5}
\left\{\begin{array}{l}
\ds \dr V =-F_0\dr u+\frac{\eta}{R^2}\partial_{\phi R}\psi,\\
\\
\ds \dz V =-F_0\dz u+\frac{\eta}{R^2}\partial_{\phi Z}\psi.
\end{array}\right.
\end{equation} 
 Consequently $ V=-F_0 u+\frac{\eta \partial_{\phi} \psi}{R^2}+C$ and $\nabla V\cdot\ephi =-\frac{F_0}{R}\dphi u+\eta \frac{\partial_{\phi\phi} \psi}{R^3}$. We have assumed that the constant $C$ is equal to zero to obtain the electrical potential that is usually chosen and compatible with the choice of the poloidal velocity.
 
 This definition of $V$ gives the final result.
\begin{equation}\label{psifinal}
 \ds\dt\psi=R[\psi,u]+\eta\triangle^{*}\psi-F_0\dphi u+\eta \frac{\partial_{\phi\phi} \psi}{R^2}
\end{equation}
with $j=-R\vJ\cdot\ephi=\triangle^{*}\psi$ the toroidal current. 

\subsubsection{Poloidal momentum equation}
To obtain an equation on the electric potential we apply the projection operator $\ephi \cdot\nabla\times\left(R^2.....\right)$ in the poloidal plane to the momentum equation. The equation obtained is 
\begin{equation}\label{u1}
\ephi \cdot\nabla\times\left[R^2 \left(\rho\dt\vv=-\rho\vv\cdot\nabla \vv-\nabla p+\vJ\times\vB\right)\right].
\end{equation}
We begin by considering the first term of (\ref{u1}): $\ephi\cdot\nabla\times( R^2\rho \dt \mathbf{v})$. Using the definition of $\vv_{pol}$ we obtain
$$
\ds\nabla\times(R^2\rho\dt \mathbf{v}_{pol})=\nabla\times \left[ -\rho R^3\dz(\dt u)\er+\rho R^3\dr(\dt u)\ez\right],
$$ 
and
$$
\ds\ephi\cdot\nabla\times( R^2\rho\dt \mathbf{v}_{pol})=\dz\left(\rho R^3 \dz(\dt u)\right)+\dr\left(\rho R^3\dr(\dt u)\right).
$$ 
By definition of the gradient and the divergence we obtain 
\begin{equation}\label{u2}
\ephi\cdot\nabla\times( R^2\rho \dt \mathbf{v}_{pol})=R\nabla\cdot(\rho R^2\nabla_{pol}\dt u).
\end{equation}
We consider the term associated to the time derivative of the parallel velocity $\ephi \cdot\rot (R^2 \rho\dt \vv_{||})=\ephi \cdot\rot \left[R^2\rho\dt (v_{||}\vB)\right]$. Developing $\vB$ we obtain
$$
\rot (R^2 \rho\dt \vv_{||})=\rot \left[R^2 \rho \left[\dt \left(v_{||}\frac{\dz \psi}{R}\right)\er -\dt \left(v_{||}\frac{\dr \psi}{R}\right)\ez+ \frac{F_0}{R}\dt v_{||}\ephi\right]\right].
$$
After some algebra
\begin{equation}\label{u3}
\ephi \cdot\rot (R^2 \rho\dt \vv_{||})=-\dz \left[R\rho \dt(v_{||}\dz \psi)\right]-\dr \left[R\rho \dt(v_{||}\dr \psi)\right]=
-R \nabla \cdot \left[\rho \dt (v_{||}\nabla_{pol} \psi)\right].
\end{equation}
Secondly, we study the current term $\ephi\cdot\nabla\times( R^2(\vJ\times\vB))$.  We recall the form of the current 
\begin{equation}\label{u4}
\vJ=\nabla\times\vB=\frac{1}{R^2}\left(\partial_{R\phi}\psi \er-R j\ephi+\partial_{Z\phi}\psi\ez
\right)
\end{equation}
computed in the previous subsection. So, using (\ref{u3}) - (\ref{u4}) we have
\begin{align*}
R^2\vJ\times\vB  =& \frac{1}{R}\left[(-R j \dr \psi+F_0\partial_{Z\phi}\psi)\er+(-R j \dz \psi-F_0\partial_{R\phi}\psi)\ez
\right]\\
 & +\frac{1}{R}\left[-((\dr\psi)(\partial_{R\phi}\psi)+(\dz\psi)(\partial_{Z\phi}\psi))\ephi\right].
\end{align*}
Applying the operator $\ephi\cdot\nabla\times(R^2 ...)$ we obtain
$$
\ephi\cdot\nabla\times(R^2\vJ\times\vB)=\dz\left(j\dr\psi\right)-\dz\left(\frac{F_0}{R}\partial_{Z\phi}\psi\right)-\dr\left(j\dz\psi\right)-\dr\left(\frac{F_0}{R}\partial_{R\phi}\psi\right).
$$
A final calculation gives the following result
\begin{equation}\label{u5}
\ephi\cdot\nabla\times(R^2\vJ\times\vB)=[\psi,j]-\frac{F_0}{R}\dphi \gs\psi=[\psi,j]-\frac{F_0}{R}\dphi j.
\end{equation}
For the pressure term, trivial computations allow to obtain the following result
\begin{equation}\label{u6}
\rot (R^2\nabla p)\cdot\ephi=-\dr(R^2)\dz p=-[p,R^2]=[R^2,p].
\end{equation}
The last term considered is  $\ephi\cdot\nabla\times(R^2\rho \vv\cdot\nabla\vv)$. Firstly we study the part which depends only on the poloidal velocity: $\ephi\cdot\nabla\times(R^2\rho \vv_{pol}\cdot\nabla\vv_{pol})$. To begin we denote by $\alpha=-R \dz u $,  $\beta=R\dr u $ and $\widehat{\rho}=R^2\rho$. So
$$
\vv_{pol}= \alpha \er+\beta \ez\mbox{ and } \vv_{pol} \cdot\nabla \vv_{pol} =(\alpha \dr \alpha+\beta \dz \alpha)\er+(\alpha \dr \beta+\beta \dz \beta)\ez.
$$
To estimate this term we propose the following decomposition
\begin{equation}\label{u7}
\rot (\widehat{\rho} \vv_{pol} \cdot\nabla \vv_{pol})\cdot\ephi=(\widehat{\rho} \rot ( \vv_{pol} \cdot\nabla \vv_{pol})+ \nabla \widehat{\rho}\times \vv_{pol} \cdot\nabla \vv_{pol})\cdot\ephi=A+B.
\end{equation}
One has the identities
$$
A= -\widehat{\rho}\left[\dz(\alpha \dr \alpha+\beta \dz \alpha)-\dr(\alpha \dr \beta+\beta \dz \beta)\right],
$$
and
$$
A= -\widehat{\rho}\left[\alpha \dr (\dz \alpha-\dr \beta)+\beta \dz (\dz \alpha-\dr \beta)+(\dr \alpha+\dz \beta)(\dz \alpha-\dr \beta)\right].
$$
Using $(\dz \alpha-\dr \beta)=-R\lp u=-R w$ and $\widehat{\rho}(\dr \alpha+\dz \beta)=-\widehat{\rho}[R,u]$ we obtain
$A =-R \widehat{\rho}[R w,u]-R \widehat{\rho} w [R,u]$.
In a first time we estimate the term $B$. The definition of the vector product gives
$$
B=-\left[\dz(\widehat{\rho})(\alpha \dr \alpha+\beta \dz \alpha)-\dr(\widehat{\rho})(\alpha \dr \beta+\beta \dz \beta)\right]
$$
which we can rewrite in the following form
$$
B=-\left(\dz(\widehat{\rho})\left[\dr(\frac12\alpha^2+\frac12\beta^2)+\beta(\dz \alpha-\dr \beta)\right]-\dr(\widehat{\rho})\left[\dz(\frac12\alpha^2+\frac12\beta^2)-\alpha(\dz \alpha-\dr \beta)\right]\right).
$$
Using $\dz \alpha-\dr \beta=-R w$, we obtain the final expression of the term $B$, which reads
$$
B=-\frac12[R^2|\nabla_{pol}u|^2,\widehat{\rho}]-\dz(\widehat{\rho})\beta(\dz \alpha-\dr \beta)-\dr(\widehat{\rho})\alpha(\dz \alpha-\dr \beta)=-\frac12[R^2|\nabla_{pol}u|^2,\widehat{\rho}]-R^2 w [\widehat{\rho},u].
$$
To finish we sum up $A$ and $B$ to obtain
$$
\rot (\widehat{\rho} \vv_{pol} \cdot\nabla \vv_{pol})\ephi=-\frac12[R^2|\nabla_{pol}u|^2,\widehat{\rho}]-R[R\widehat{\rho} w,u]-R\widehat{\rho} w [R,u].
$$
Therefore
\begin{equation}\label{u8}
\rot (\widehat{\rho} \vv_{pol} \cdot\nabla \vv_{pol})\ephi=-\frac12[R^2|\nabla_{pol}u|^2,\widehat{\rho}]-[R^2\widehat{\rho} w,u].
\end{equation}
~\\
 At this moment of the derivation using the equation on the velocity in the resistive MHD (\ref{MHDre1}), using the projection $\ephi\cdot\nabla\times( R^2\rho \dt \mathbf{v})$ and neglecting all the terms in the velocity equation which depend on the parallel velocity we have obtained the equation on $u$ implemented in the code. Now we propose to derive the terms neglected in the code which correspond to the following cross terms between the parallel and poloidal velocities and given by
 \begin{equation}\label{u9}
\ephi\cdot\rot R^2\rho\left(\dt \mathbf{v}_{||}+\mathbf{v}_{||}\cdot\nabla \mathbf{v}_{||}+\mathbf{v}_{||}\cdot\nabla \mathbf{v}_{pol}+\mathbf{v}_{pol}\cdot\nabla \mathbf{v}_{||}\right).
\end{equation}
Firstly we consider the term $\ephi \cdot\rot (R^2\rho \vv_{||}\cdot\nabla \vv_{||})$. We begin by splitting the term into two parts 
$$
A=\widehat{\rho}\nabla \times (v_{||}\vB\cdot\nabla (v_{||}\vB))\cdot \ephi \mbox{ and }  B=\nabla \widehat{\rho}\times (v_{||}\vB\cdot\nabla (v_{||}\vB)) \cdot \ephi
$$
and we define $v_{||}\vB =v_{||}a\er+v_{||}b\ez+v_{||}c\ephi$ with $a=\frac{\dz \psi}{R}$, $b=-\frac{\dr \psi}{R}$ and $c=\frac{F_0}{R}$, consequently
\begin{align*}
\vB\cdot\nabla (v_{||}\vB)=&+\left[a\dr (a v_{||})+b \dz (a v_{||})+\frac{c}{R}\dphi (a v_{||})\right]\er+\left[ a\dr (bv_{||})+b\dz (bv_{||})+\frac{c}{R}\dphi (bv_{||})\right]\ez\\
&+\left[a\dr (cv_{||})+b\dz (cv_{||})+\frac{c}{R}\dphi (cv_{||})\right]\ephi-\left(v_{||}^2 \frac{c^2}{R}\right)\er+\left(v_{||}^2 \frac{c}{R} a\right)\ephi.
\end{align*}
The term $B$ can be decomposed as $B =C+D$ with
$$
C=(\dr \widehat{\rho})\left[av_{||}\dr (bv_{||})+bv_{||}\dz (bv_{||})\right]-(\dz \widehat{\rho})\left[av_{||}\dr (av_{||})+bv_{||}\dz (av_{||})\right],
$$
and
\begin{align*}
D & = +(\dr \widehat{\rho})\left[\frac{cv_{||}}{R}\dphi (bv_{||})\right]-(\dz \widehat{\rho})\left[\frac{cv_{||}}{R}\dphi (av_{||})\right]+\dz(\widehat{\rho})(v_{||}^2\frac{c^2}{R})\\
& = -\frac{F_0}{R^3}v_{||}\left(\nabla_{pol} \widehat{\rho}\cdot\dphi(v_{||}\nabla_{pol} \psi)\right)+\dz(\widehat{\rho})(v_{||}^2\frac{F_0^2}{R^3}).
\end{align*}
We rewrite the term $C$ to obtain
\begin{align*}
C= &\dr(\widehat{\rho})\left[\dz(\frac12 v_{||}^2a^2+\frac12 v_{||}^2b^2)+av_{||}\dr (bv_{||})-av_{||}\dz (av_{||})\right]\\
& -\dz(\widehat{\rho})\left[\dr(\frac12 v_{||}^2a^2+\frac12 v_{||}^2b^2)+bv_{||}\dz (av_{||})-bv_{||}\dr (bv_{||})\right]
\end{align*}
which is equal to
\begin{equation}\label{u11}
C=\frac12[\widehat{\rho},v_{||}^2\frac{|\nabla_{pol} \psi|^2}{R^2}]-v_{||}\left[\dr(\widehat{\rho})a(\dz (av_{||})-\dr (bv_{||}))+\dz(\widehat{\rho})b(\dz (av_{||})-\dr (bv_{||}))\right].
\end{equation}
We remark that $\dz (av_{||})-\dr (bv_{||})=\frac{v_{||}}{R}\gs \psi+\frac{1}{R}(\gradp v_{||}\cdot \gradp \psi)$. Using this result we obtain the following expression for $B$:\\
~\\
\begin{align*}
B = & +\frac12[\widehat{\rho},v_{||}^2|\vB_{pol}|^2]-\frac{v_{||}^2}{R^2} j [\widehat{\rho},\psi]-\frac{v_{||}}{R^2}(\gradp v_{||}\cdot \gradp \psi)[\widehat{\rho},\psi]\\
& -\frac{F_0}{R^3}v_{||}(\gradp\widehat{\rho}\cdot\dphi(v_{||}\gradp \psi))+\dz(\widehat{\rho})(v_{||}^2\frac{F_0^2}{R^3}),
\end{align*}
with $\vB_{pol}=\frac{1}{R}\nabla \psi \times \ephi$.\\ \\
Now we study the term $A=\widehat{\rho}\nabla \times (v_{||}\vB\cdot\nabla (v_{||}\vB))\cdot \ephi$ which is equal to
\begin{align*}
A = & - \widehat{\rho}\dz\left[ a v_{||}\dr (a v_{||})+b v_{||}\dz (av_{||})+\frac{c v_{||}}{R}\dphi (a v_{||})-v_{||}^2\frac{c^2}{R}\right]\\
&+\widehat{\rho}\dr\left[a v_{||}\dr (b v_{||})+b v_{||}\dz (b v_{||}) +\frac{c v_{||}}{R}\dphi (b v_{||})\right].
\end{align*}
We split these terms into two terms $A=A_1+A_2$ defined by $A_1=-\widehat{\rho}\dz(av_{||}\dr (av_{||})+bv_{||}\dz (av_{||}))+\widehat{\rho}\dr(av_{||}\dr (bv_{||})+bv_{||}\dz (bv_{||}))$
and $A_2=-\widehat{\rho}\dz(\frac{cv_{||}}{R}\dphi (av_{||}))+\widehat{\rho}\dr(\frac{cv_{||}}{R}\dphi (bv_{||}))+\widehat{\rho}\dz (v_{||}^2\frac{c^2}{R})$.\\
~\\
Factorizing  the term $A_1$ we obtain
\begin{align*}
A_1 = & -\widehat{\rho}\left[av_{||}\dr(\dz (av_{||})-\dr (bv_{||}))+bv_{||}\dz(\dz (av_{||})-\dr (bv_{||}))\right]\\
 &  -\widehat{\rho}\left[(\dr (a v_{||})+\dz (bv_{||}) )(\dz (a v_{||})-\dr (bv_{||}))\right].
\end{align*}
~\\
Using that $\dz (a v_{||})-\dr (bv_{||})=\frac{v_{||}}{R}\gs \psi+\frac{1}{R}(\gradp v_{||}\cdot\gradp \psi)$ and $\dr (av_{||})+\dz (bv_{||})=[\frac{v_{||}}{R},\psi]$ we obtain
\begin{align*}
A_1 = & -\frac{\widehat{\rho}v_{||}}{R}[\frac{v_{||}}{R} j,\psi]-\frac{\widehat{\rho}v_{||}}{R}[\frac{1}{R}(\gradp v_{||}\cdot\gradp \psi),\psi]\\
& -\frac{\widehat{\rho}v_{||}}{R} j [\frac{v_{||}}{R},\psi]-\frac{\widehat{\rho}}{R}(\gradp v_{||}\cdot\gradp \psi)[\frac{v_{||}}{R},\psi].
\end{align*}
The properties of the Poisson bracket allow to conclude 
\begin{equation}\label{u12}
A_1=-\widehat{\rho}[\frac{v_{||}^2}{R^2} j ,\psi]-\widehat{\rho}[\frac{v_{||}}{R^2}(\gradp v_{||}\cdot\gradp \psi),\psi].
\end{equation}
For the term $A_2$, some computations allow to obtain the result
\begin{equation}\label{u13}
A_2=-\widehat{\rho}\dz\left(\frac{F_0}{R^3}v_{||}\dphi(v_{||}\dz \psi)\right)-\widehat{\rho}\dr\left(\frac{F_0}{R^3}v_{||}\dphi(v_{||}\dr \psi)\right)+\widehat{\rho}\dz\left(\frac{F_0^2}{R^3}v_{||}^2\right).
\end{equation}
At the end using the properties of the  Poisson bracket  and the product of derivatives  we obtain
\begin{align*}
\ephi \cdot\rot (R^2\rho \vv_{||}\cdot\nabla \vv_{||})= & -[\rho v_{||}^2 j,\psi]-[\rho v_{||}(\gradp v_{||}\cdot\gradp \psi),\psi]+\frac12[\widehat{\rho},v_{||}^2|\vB_{pol}|^2]\\
& -\dz\left(\widehat{\rho}\frac{F_0}{R^3}v_{||}\dphi(v_{||}\dz \psi)\right)-\dr\left(\widehat{\rho}\frac{F_0}{R^3}v_{||}\dphi(v_{||}\dr \psi)\right)\\
& +\dz\left(\widehat{\rho}\frac{F_0^2}{R^3}v_{||}^2\right) \numberthis \label{u12b}.
\end{align*}
To finish the derivation associated with poloidal velocity, we study the last term $\ephi \cdot\rot (\widehat{\rho} \vv_{pol}\cdot\nabla \vv_{||}+\widehat{\rho} \vv_{||}\cdot\nabla \vv_{pol})$. Firstly we note
$$
v_{||}\vB\cdot\nabla \vv_{pol}=v_{||}\left[a\dr \alpha + b\dz \alpha +\frac{c}{R}\dphi \alpha\right]\er+v_{||}\left[a\dr \beta + b\dz \beta +\frac{c}{R}\dphi \beta\right]\ez+v_{||}\frac{c}{R}\alpha \ephi
$$
and
$$
\vv_{pol}\cdot\nabla (v_{||}\vB)=\left[\alpha\dr(a v_{||})+\beta\dz(a v_{||})\right]\er+\left[\alpha\dr(b v_{||})+\beta\dz(b v_{||})\right]\ez+
\left[\alpha\dr(c v_{||})+\beta\dz(c v_{||})\right]\ephi.
$$
The term $\ephi \cdot\rot (\widehat{\rho} \vv_{pol}\cdot\nabla (v_{||}\vB)+ \widehat{\rho}v_{||}\vB\cdot\nabla \vv_{pol})$ can be split into two terms
\begin{align*}
(A) &=\nabla \widehat{\rho} \times ( \vv_{pol}\cdot\nabla (v_{||}\vB)+ v_{||}\vB\cdot\nabla \vv_{pol})\cdot\ephi\\
(B) &=\widehat{\rho}\rot( \vv_{pol}\cdot\nabla (v_{||}\vB)+ v_{||}\vB\cdot\nabla \vv_{pol})\cdot\ephi .
\end{align*}
Using our notation we obtain that $(A)=(A1)+(A2)$ with
\begin{align*}
(A1) = & - \dz \widehat{\rho} \left[v_{||}a\dr \alpha+\alpha \dr (a v_{||})+v_{||}b\dz \alpha+\beta\dz (a v_{||})\right]\\
&  +\dr \widehat{\rho} \left[v_{||}b\dz \beta+\beta \dz (b v_{||})+v_{||}a\dr \beta+\alpha\dr (b v_{||})\right],
\end{align*}
and
$$
(A2)=-\dz \widehat{\rho} \frac{c v_{||}}{R}\dphi\alpha+\dr \widehat{\rho} \frac{c v_{||}}{R}\dphi\beta.
$$
Straightforward calculations show that the term $(A2)$ is equal to $(A2)=\frac{F_0}{R}v_{||}(\nabla_{pol} \widehat{\rho} \cdot \nabla_{pol} (\dphi u))$. Now we consider the term $(A1)$ which can me rewritten in the following form
\begin{align*}
(A1)  = &-\dz (\widehat{\rho}) \left[ \dr(a v_{||}\alpha+b v_{||}\beta) -v_{||}b \dr \beta-\beta\dr (b v_{||})+v_{||}b\dz\alpha+\beta\dz(a v_{||})\right]\\
& +\dr (\widehat{\rho}) \left[ \dz(a v_{||}\alpha+b v_{||}\beta) -v_{||}a \dz \alpha-\alpha\dz (a v_{||})+\alpha \dr(b v_{||})+v_{||}a\dr\beta\right].
\end{align*}
Using the definition of the different coefficients we obtain that $(A1)=-[\widehat{\rho},v_{||}(\gradp \psi\cdot\gradp u)]+(A3)+(A4)$ with
\begin{align*}
(A3) &=-\dz (\widehat{\rho}) v_{||} (-b\dr \beta +b\dz \alpha)+\dr (\widehat{\rho}) v_{||} (-a\dz \alpha +a\dr \beta)\\
(A4) &=-\dz (\widehat{\rho}) (-\beta\dr (v_{||} b) +\beta\dz (a v_{||}))+\dr (\widehat{\rho} ) (-\alpha\dz (v_{||} a) +\alpha\dr (b v_{||})).
\end{align*}
Now we consider the term $(A3)$ which can be factorized in the following way 
$$
(A3)=(\dz (\widehat{\rho}) v_{||} b+\dr (\widehat{\rho}) v_{||}a)(-\dz \alpha +\dr \beta).
$$
Using that $(\dz \alpha -\dr \beta)=-R\lp u$ we obtain that $(A3)=v_{||} w [\widehat{\rho},\psi]$ and $(A4)=-(\dz \widehat{\rho} \beta + \dr \widehat{\rho} \alpha)(\dz (a v_{||})-\dr (b v_{||}))$. \\
We known that $\dz (a v_{||})-\dr (b v_{||})=\frac{1}{R}(\gradp \psi\cdot\gradp v_{||})+\frac{1}{R}v_{||} j $, consequently at the end we have $(A4)=[u,\widehat{\rho}](\gradp \psi\cdot\gradp v_{||}+v_{||} j)$ with $(\dz (\widehat{\rho}) \beta + \dr (\widehat{\rho}) \alpha)=R[u,\widehat{\rho}]$. Putting all the terms together we obtain that 
\begin{align*}
(A) & =-[u,\widehat{\rho}](\gradp \psi\cdot\gradp v_{||})-[u,\widehat{\rho}]v_{||} j -[\widehat{\rho},v_{||}(\gradp \psi\cdot\gradp u)]\\
&\quad+v_{||} w [\widehat{\rho},u]+\frac{F_0}{R}v_{||}(\gradp \widehat{\rho}\cdot\gradp (\dphi u)).
\end{align*}
Now we consider the term $(B)=\widehat{\rho}\nabla \times (\vv_{pol}\cdot\nabla v_{||}\vB+ v_{||}\vB\cdot\nabla \vv_{pol})\cdot\ephi$ decomposed into two terms $(B)=(B1)+(B2)$ with
\begin{align*}
(B1) &=-\widehat{\rho}\dz\left[v_{||}a\dr \alpha+v_{||}b\dz \alpha+\alpha \dr(a v_{||})+\beta \dz(a v_{||})\right]\\
&\quad +\widehat{\rho}\dr \left[v_{||}a\dr \beta+v_{||}b\dz \beta+\alpha \dr(b v_{||})+\beta \dz(b v_{||}) \right]
\end{align*}
and
$$
(B2)=\widehat{\rho}\left(-\dz(\frac{c v_{||}}{R}\dphi\alpha)+\dr(\frac{c v_{||}}{R}\dphi\beta)\right).
$$
We consider the term $(B1)$. We begin by expending $(B1)$ and after rearranging terms we obtain
\begin{align*}
(B1) = &-\widehat{\rho}\left[v_{||}a\dr(\dz \alpha-\dr \beta)+v_{||}b\dz(\dz \alpha-\dr \beta)\right.\\
& \left.+\alpha\dr(\dz (a v_{||})-\dr (b v_{||}))+\beta\dz(\dz (a v_{||})-\dr (b v_{||}))\right.\\
& \left.+(\dr(v_{||}a)+\dz(v_{||}b))(\dz \alpha-\dr \beta)+(\dr\alpha+\dz\beta)(\dz (a v_{||})-\dr (b v_{||}))\right].
\end{align*}
We use that $(\dz \alpha-\dr \beta)=-R\lp u$ and $\dz (a v_{||})-\dr (b v_{||})=\frac{v_{||}}{R} j+\frac{1}{R}(\gradp \psi\cdot\gradp v_{||})$. Using $\dr(v_{||}a)+\dz(v_{||}b)=[\frac{v_{||}}{R},\psi]$ and $(\dr\alpha+\dz\beta)=[u,R]$, we write the term $(B1)$ in the following form
\begin{align*}
(B1) =& +\frac{1}{R}\widehat{\rho}v_{||}[R w,\psi]+\widehat{\rho}R w [\frac{v_{||}}{R},\psi]-R\widehat{\rho}[u,\frac{v_{||}}{R} j ]-\widehat{\rho}[u,R]\frac{ v_{||}}{R} j\\
= & -R\widehat{\rho}[u,\frac{1}{R}(\gradp \psi\cdot\gradp v_{||})] -\frac{\widehat{\rho}}{R}[u,R](\gradp \psi\cdot\gradp v_{||})\\
= & + \widehat{\rho}[v_{||} w,\psi]-\widehat{\rho}[u,v_{||} j ]-\widehat{\rho}[u,(\gradp \psi\cdot\gradp v_{||})].
\end{align*}
The term $(B2)$ is equal to $\widehat{\rho} R\nabla\cdot(\frac{F_0}{R^2}v_{||}\gradp (\dphi u))$, consequently we obtain
\begin{equation}\label{u15}
(B)=\widehat{\rho}[v_{||} w,\psi]-\widehat{\rho}[u,v_{||} j]-\widehat{\rho}[u,(\gradp\psi\cdot\gradp v_{||})]+\widehat{\rho} R\nabla.(\frac{F_0}{R^2}v_{||}\gradp(\dphi u)).
\end{equation}
All together we have derived the following term
\begin{align*}
\ephi \cdot\rot (\widehat{\rho} \vv_{pol}\cdot\nabla \vv_{||}+\widehat{\rho} \vv_{||}\cot\nabla \vv_{pol})= &-[\widehat{\rho},v_{||}(\gradp \psi\cdot\gradp u)]+[\widehat{\rho}v_{||} w,\psi]-[u,\widehat{\rho}v_{||} j ]\\
&-[u,\widehat{\rho}(\gradp \psi\cdot\gradp v_{||})]+ R\nabla\cdot\left(\widehat{\rho}\frac{F_0}{R^2}v_{||}\gradp (\dphi u)\right). \numberthis \label{u13b}
\end{align*}

\subsubsection{Equation on $\rho$ and $T$}
For the thermodynamic equations $\dt \rho =-\rho \nabla \cdot \vv-\vv \cdot\nabla \rho$ and $\dt p =-\gamma p \nabla \cdot \vv-\vv \cdot\nabla p$ , we propose to rewrite the equations in order to obtain a dependency on $u$ and $v_{||}$. 
We begin with
\begin{align*}
\vv_{pol}\cdot\nabla \rho = & +(-R\nabla u \times \ephi)\cdot\nabla \rho \\
= & + (-R\dz u \er +R \dr u \ez)\cdot\left[\dr \rho \er +\frac{1}{R}\dphi \rho \ephi +\dz \rho \ez\right]\\
= & -R(\dz u)(\dr \rho)+R(\dr u)(\dz \rho)=-R[\rho,u].
\end{align*}
Then we compute the second term $\rho \nabla \cdot \vv_{pol}$:
$$
\rho \nabla \cdot \vv_{pol}=-\rho\frac{1}{R}\dr(R^2\dz u)+\rho \dz (R \dr u)=-2\rho \dz u.
$$
Now we derive the term associated to the parallel velocity $\vv_{||}=v_{||}\vB$:
\begin{align*}
v_{||}\vB\cdot\nabla \rho &=v_{||}\left[\frac{F_0}{R}\ephi+\frac{1}{R}\dz \psi \er -\frac{1}{R}\dr \psi \ez\right]\cdot\nabla \rho\\
 &=v_{||}\frac{F_0}{R^2}\dphi \rho + \frac{1}{R}v_{||}[\rho,\psi].
\end{align*}
The second term is
\begin{align*}
\rho \nabla \cdot (v_{||}\vB) &=\rho \nabla \cdot \left[v_{||}(\frac{F_0}{R}\ephi+\frac{1}{R}\dz \psi \er -\frac{1}{R}\dr \psi \ez)\right]\\
 & =\frac{ \rho}{R}[v_{||},\psi]+\frac{\rho F_0}{R^2}\dphi v_{||}.
\end{align*}
Consequently we obtain
\begin{equation}\label{rho}
\dt \rho = R[\rho,u]+2\rho \dz u-\frac{v_{||} F_0}{R^2}\dphi \rho-\frac{v_{||}}{R}[\rho,\psi]-\frac{ \rho}{R}[v_{||},\psi]-\frac{\rho F_0}{R^2}\dphi v_{||}
\end{equation}
and
\begin{equation}\label{T}
\dt p = R[p,u]+2\gamma p \dz u-\frac{v_{||} F_0}{R^2}\dphi p-\frac{v_{||}}{R}[p,\psi]-\frac{\gamma p}{R}[v_{||},\psi]-\frac{\gamma p F_0}{R^2}\dphi v_{||}.
\end{equation}

\subsubsection{Equation on the parallel velocity}
We consider the equation $\rho\dt \vv=-\rho \vv\cdot \nabla \vv -\nabla p+\mathbf{J}\times  \vB$. To obtain the equation on $v_{||}$ we project the momentum equation applying the operator $\vB\cdot (...)$. Firstly we remark  that $\vB \cdot (\mathbf{J}\times  \vB)=Det(\vB,\mathbf{J},\vB)=0$. Secondly we consider $ \vB\cdot \rho\dt (\vv_{||}+\vv_{pol})$.
Using the definition of $\vB$ we prove that the term $\vB\cdot \rho\dt (v_{||}\vB$) is equal to 
\begin{equation}\label{dtvpar1}
\rho|\vB|^2\dt v_{||}+\rho v_{||}\frac{1}{R^2}\gradp \psi\cdot \gradp (\dt \psi).
\end{equation}
For the poloidal term $\vB \cdot \rho(\dt \vv_{pol})$, straightforward computations show that this term is given by 
\begin{equation}\label{dtvpar2}
\vB \cdot \rho(\dt \vv_{pol})=-\rho\nabla_{pol} \psi\cdot \nabla_{pol} (\dt u).
\end{equation}
For the pressure term $\vB\cdot\nabla p$, we obtain
\begin{equation}\label{vp1}
\vB\cdot\nabla p=(\frac{F_0}{R}\ephi+\frac{1}{R}\dz \psi \er-\frac{1}{R}\dr \psi \ez)(\dr p \er+\frac{1}{R}\dphi p \ephi+\dz p \ez)=\frac{F_0}{R^2}\dphi p+\frac{1}{R}[p,\psi].
\end{equation}
Now we consider the following terms (the four last terms which are neglected in the model implemented in the code JOREK):
\begin{equation}\label{vp2}
\vB\cdot \rho\left(\vv_{||}\cdot\nabla \vv_{||}+\mathbf{v}_{||}\cdot\nabla \mathbf{v}_{pol}+\mathbf{v}_{pol}\cdot\nabla \mathbf{v}_{||}+\mathbf{v}_{pol}\cdot\nabla \mathbf{v}_{pol}\right).
\end{equation}
Firstly we study $\vB \cdot(\rho\vv_{||}\cdot\nabla \vv_{||})=\vB \cdot (\rho v_{||}\vB \cdot\nabla( v_{||}\vB))$. For this we note $\vv_{||}=v_{||}\vB=v_{||}(a\er+b \ez+c \ephi)$ 
with $a=\frac{1}{R}\dz \psi$, $b=-\frac{1}{R}\dr \psi$ and $c=\frac{F_0}{R}$. Using these notations we obtain
\begin{align*}
\rho \vv_{||}\cdot\nabla \vv_{||} &=\rho v_{||}\left(a\dr (v_{||}a)+b\dz (v_{||}a)+\frac{c}{R}\dphi (v_{||}a)-v_{||}\frac{c^2}{R}\right)\er\\
& \quad+\rho v_{||}\left(a\dr (v_{||}b)+b\dz (v_{||}b)+\frac{c}{R}\dphi (v_{||}b)\right)\ez\\
&\quad +\rho v_{||}\left(a\dr (v_{||}c)+b\dz (v_{||}c)+\frac{c}{R}\dphi (v_{||}c)\right)\ephi+v_{||}a\frac{c}{R}\ephi.
\end{align*}
Now we rewrite the term as $\vB\cdot (\rho\vv_{||}\cdot\nabla \vv_{||})=W_1+W_2 +W_3$, where $W_1$ is given by
\begin{align*}
W_1&=\rho \frac{c}{R}v_{||}\left[a \dphi (v_{||}a)+b \dphi (v_{||}b)+c \dphi (v_{||}c)\right]+\\
&=\rho \frac{F_0}{2 R^2}\dphi\left(v_{||}^2 a^2+v_{||}^2 b^2+v_{||}^2 c^2\right )=\rho \frac{F_0}{R^2}\dphi\left(\frac{v_{||}^2|\vB|^2}{2}\right).
\end{align*}
The term $W_2$ is given by
\begin{align*}
W_2&=b \rho v_{||}\left[a\dz (v_{||} a)+b \dz (v_{||}b)+c\dz (v_{||}c)\right]\\
&=b\rho \dz\frac12\left(v_{||}^2 a^2+v_{||}^2 b^2+v_{||}^2 c^2\right )=-\frac{\rho}{R}\dr \psi \dz\left(\frac{v_{||}^2|\vB|^2}{2}\right).
\end{align*}
The term $W_3$ is given by
\begin{align*}
W_3 &=a \rho v_{||}\left[a \dr (v_{||}a)+b \dr (v_{||}b)+c \dr (v_{||}c)\right]\\
&=a \rho \dr\frac12\left(v_{||}^2 a^2+v_{||}^2 b^2+v_{||}^2 c^2\right )=\frac{\rho}{R}\dz \psi \dr\left(\frac{v_{||}^2|\vB|^2}{2}\right).
\end{align*}
At the end we obtain 
\begin{equation}\label{vp3}
\vB\cdot (\rho\vv_{||}\cdot\nabla \vv_{||})=-\frac{\rho}{R}[\psi,\frac{v_{||}^2|\vB|^2}{2}]+\rho\frac{F_0}{R^2}\dphi\left(\frac{v_{||}^2|\vB|^2}{2}\right).
\end{equation}
Now we propose to study the fourth term $\vB \cdot \left(\rho\mathbf{v}_{pol}\cdot\nabla \mathbf{v}_{pol}\right)$. To estimate this term we define $\alpha=-R \dz u $ and $\beta=R\dr u $. Using this notation we prove that
$$
\vv_{pol}= \alpha \er+\beta \ez\mbox{ and } \vv_{pol} \cdot\nabla \vv_{pol} =(\alpha \dr \alpha+\beta \dz \alpha)\er+(\alpha \dr \beta+\beta \dz \beta)\ez.
$$
Using the definitions of the coefficients we obtain
$$
\vB\cdot(\rho\mathbf{v}_{pol}\cdot\nabla \mathbf{v}_{pol})=\frac{\rho}{R}\rho\left(\dz(\psi)(\alpha \dr \alpha+\beta \dz \alpha)-\dr(\psi)(\alpha \dr \beta+\beta \dz \beta)\right)
$$
which is equal to
$$
\vB\cdot(\rho\mathbf{v}_{pol}\cdot\nabla \mathbf{v}_{pol})=\frac{1}{R}\left(\dz(\psi)(\dr(\frac12\alpha^2+\frac12\beta^2)+\beta(\dz \alpha-\dr \beta))-\dr(\psi)(\dz(\frac12\alpha^2+\frac12\beta^2)-\alpha(\dz \alpha-\dr \beta))\right)
$$
to obtain
$$
\vB\cdot(\rho\mathbf{v}_{pol}\cdot\nabla \mathbf{v}_{pol})=\frac{1}{2R}\rho[R^2|\nabla_{pol}u|^2,\psi]+\frac{\rho}{R}\left[\dz(\psi)\beta(\dz \alpha-\dr \beta)+\dr(\psi)\alpha(\dz \alpha-\dr \beta)\right].
$$
After straightforward computations we obtain
\begin{equation}\label{vp4}
\vB\cdot(\rho\mathbf{v}_{pol}\cdot\nabla \mathbf{v}_{pol})=\frac{1}{2R}\rho[R^2|\nabla_{pol}u|^2,\psi]+\rho R w [\psi,u].
\end{equation}
 Now we consider the term $\vB \cdot (\rho \vv_{pol}\cdot\nabla (v_{||}\vB))$. To estimate this term we define $\alpha=-R\dz u$, $\beta =R\dr u$, $a=\frac{1}{R}\dz \psi$, $b=-\frac{1}{R}\dr \psi$ and $c=\frac{F_0}{R}$. Consequently we obtain
$$
\vv_{pol}=\alpha \er +\beta \ez,\mbox{ and } v_{||}\vB=v_{||}(a \er + b\ez + c\ephi).
$$
Using  these notations we obtain
\begin{align*}
\rho \vv_{pol}\cdot \nabla (v_{||}\vB) =& +\rho[\alpha \dr (v_{||}a)+\beta \dz (v_{||}a)]\er\\
 & +\rho[\alpha \dr (v_{||}b)+\beta \dz (v_{||}b)]\ez+\rho[\alpha \dr (v_{||}c)+\beta \dz (v_{||}c)]\ephi,
\end{align*}
consequently 
\begin{align*}
\vB\cdot (\rho \vv_{pol}\cdot \nabla (v_{||}\vB)) = & +\rho a[\alpha \dr (v_{||}a)+\beta \dz (v_{||}a)]\\
&+\rho b[\alpha \dr (v_{||}b)+\beta \dz (v_{||}b)]+\rho c[\alpha \dr (v_{||}c)+\beta \dz (v_{||}c)].
\end{align*}
Rearranging terms we obtain
\begin{align*}
\vB\cdot (\rho \vv_{pol}\cdot \nabla (v_{||}\vB)) = &+\rho(a^2+b^2+c^2)\alpha\dr(v_{||})+\rho(a^2+b^2+c^2)\beta\dz(v_{||})\\
& +\frac12 \rho \alpha v_{||}\dr (a^2+b^2+c^2)+\frac12 \rho \beta v_{||}\dz (a^2+b^2+c^2).
\end{align*}
Using that $(a^2+b^2+c^2)=|\vB|^2$ we obtain that
\begin{equation}\label{vp5}
\vB\cdot (\rho \vv_{pol}\cdot \nabla (v_{||}\vB))=R\rho|\vB|^2[u,v_{||}]+R\rho v_{||}[u,\frac{|\vB|^2}{2}].
\end{equation}
To finish we consider the term $\vB \cdot (\rho v_{||}\vB\cdot\nabla (\vv_{pol}))=\rho v_{||}(\vB\cdot(\vB\cdot \nabla \vv_{pol}))$.
We define  $\vv_{pol}=\alpha \er+\beta \ez$ and $\vB=a \er + b\ez +c\ephi$. Using these definitions we obtain
\begin{align*}
\vB\cdot(\vB\cdot \nabla \vv_{pol})&=a\left[a\dr \alpha +b\dz \alpha +\frac{c}{R}\dphi \alpha\right]+b\left[a\dr \beta +b\dz \beta +\frac{c}{R}\dphi \beta\right]+\frac{c^2}{R}\alpha,\\
&=a\left[a\dr \alpha +b\dz \alpha\right]+b\left[a\dr \beta +b\dz \beta\right] +\frac{ac}{R}\dphi \alpha+\frac{bc}{R}\dphi \beta+\frac{c^2}{R}\alpha.
\end{align*}
Now we consider the first term $A=a(a\dr \alpha +b\dz \alpha)+b(a\dr \beta +b\dz \beta)$. For this we rewrite the term in the following form
$$
A=a\dr (a\alpha+b\beta)+b\dz (a\alpha+b\beta)-\alpha\dr(\frac{a^2}{2})-\beta\dz(\frac{b^2}{2})-\beta a\dr b-\alpha b \dz a.
$$
We define $C=a\dr (a\alpha+b\beta)+b\dz (a\alpha+b\beta)$ and $D=-\alpha\dr(\frac{a^2}{2})-\beta\dz(\frac{b^2}{2})-\beta a\dr b-\alpha b \dz a$.
We can rewrite the term $D$ in the following form 
$$
D=-(\alpha\dr(\frac{a^2}{2}+\frac{b^2}{2})+\beta\dz(\frac{a^2}{2}+\frac{b^2}{2})+\alpha b(\dz a-\dr b)-\beta a (\dz a-\dr b)).
$$
We obtain
$$
D=-R[u,\frac{|\vB_{pol}|^2}{2}]+\frac{j}{R}[u,\psi]
$$
Straightforward computations show that $C=\frac{1}{R}[\psi,(\gradp \psi\cdot \gradp u)]$. The term $A$ is given by $A=C+D$. Now we consider the term $B=\frac{ac}{R}\dphi \alpha+\frac{bc}{R}\dphi \beta+\frac{c^2}{R}\alpha$ and it is easy to prove that \\$B=-\frac{F_0}{R^2}(\gradp \psi\cdot \gradp (\dphi u))-\frac{F_0^2}{R^2}\dz u$.
At the end we obtain
\begin{align*}
\vB \cdot (\rho v_{||}\vB\cdot\nabla (\vv_{pol})) & =-R\rho v_{||}[u,\frac{|\vB_{pol}|^2}{2}]+\rho v_{||}\frac{j}{R}[u,\psi]+\frac{\rho v_{||}}{R}[\psi,(\gradp \psi\cdot \gradp u)]\\
& \quad -\frac{\rho v_{||}F_0}{R^2}(\gradp \psi\cdot \gradp (\dphi u))-\rho v_{||}\frac{F_0^2}{R^2}\dz u.\numberthis \label{vp6}
\end{align*}

\subsection{Final Model \label{sec:final_model}} 
We define the magnetic and velocity fields by $\vB=\frac{F_0}{R}\ephi+\frac{1}{R}\nabla \psi \times \ephi$ and $\vv=-R\nabla u \times \ephi+v_{||}\vB$.
Using all the equations (\ref{psifinal}), (\ref{u2},\ref{u3},\ref{u5},\ref{u6},\ref{u8},\ref{u12},\ref{u13}), (\ref{rho}), (\ref{T}), (\ref{dtvpar1},\ref{dtvpar2},\ref{vp1},\ref{vp3},\ref{vp4},\ref{vp5},\ref{vp6}) based on these definitions of the fields and the definition of the toroidal current and poloidal vorticity, we obtain the final reduced MHD model with parallel velocity. 
$$
\left\{\begin{array}{l}
\ds\dt\frac{1}{R^2}\psi=\frac{1}{R}[\psi,u]+\frac{\eta}{R^2}\left(j +\frac{\partial_{\phi\phi} \psi}{R^2}\right)-\frac{1}{R^2}F_0\dphi u,\\
\\
\\
\ds\nabla \cdot(\widehat{\rho}\nabla_{pol} \dt u) -\nabla \cdot (\rho \dt (v_{||}\gradp \psi))=\frac{1}{2R}[R^2|\nabla_{pol}u|^2,\widehat{\rho}]+\frac{1}{R}[\widehat{\rho} R^2w,u]-\frac{1}{R}[R^2,p]+\frac{1}{R}[\psi,j]-\frac{F_0}{R^2}\dphi j\\
\\
\ds+\nabla\cdot (\nu_{perp}\nabla w)+\frac{1}{R}[\rho v_{||}^2 j,\psi]+\frac{1}{R}[\rho v_{||}(\gradp v_{||}\cdot\gradp \psi),\psi]- \nabla \cdot \left(\widehat{\rho}\frac{F_0}{R^2}v_{||}\nabla_{pol} (\dphi u)\right)\\
\\
\ds+\frac{1}{R}[\widehat{\rho},v_{||}(\gradp \psi\cdot\gradp u)]-\frac{1}{2R}[\widehat{\rho},v_{||}^2|\vB_{pol}|^2]-\frac{1}{R}[\widehat{\rho}v_{||} w,\psi]+\frac{1}{R}[u,\widehat{\rho}v_{||} j ]+\frac{1}{R}[u,\widehat{\rho}(\gradp \psi\cdot \gradp v_{||})]\\
\\
\ds+\frac{1}{R}\dz\left(\widehat{\rho}\frac{F_0}{R^3}v_{||}\dphi(v_{||}\dz \psi)\right)+\frac{1}{R}\dr\left(\widehat{\rho}\frac{F_0}{R^3}v_{||}\dphi(v_{||}\dr \psi)\right)-\frac{1}{R}\dz\left(\widehat{\rho}\frac{F_0^2}{R^3} v_{||}^2\right),
\\
\\
\ds w=\lp u,\\
\\
\\
\ds j=\gs \psi,\\
\\
\\
\ds\dt\rho=R[\rho,u]+2\rho \dz u-\frac{v_{||} F_0}{R^2}\dphi \rho-\frac{v_{||}}{R}[\rho,\psi]-\frac{ \rho}{R}[v_{||},\psi]-\frac{\rho F_0}{R^2}\dphi v_{||},\\
\\
\\
\ds\dt p=R[p,u]+2\gamma p \dz u-\frac{v_{||} F_0}{R^2}\dphi p-\frac{v_{||}}{R}[p,\psi]-\frac{\gamma p}{R}[v_{||},\psi]-\frac{\gamma p F_0}{R^2}\dphi v_{||},\\
\\
\\
\rho|\vB^2|\dt v_{||}+\rho v_{||}\frac{1}{R^2}\gradp \psi \cdot\gradp (\dt \psi)-\rho\gradp \psi\cdot \gradp (\dt u)=-\frac{1}{R}[p,\psi]-\frac{F_0}{R^2}\dphi p+\frac{\rho }{R}[\psi,\frac{v_{||}^2|\vB|^2}{2}]\\
\\
\ds-\frac{F_0}{R}\rho \dphi\left(\frac{v_{||}^2|\vB|^2}{2R}\right)-\frac{1}{2R}\rho[R^2|\nabla_{pol}u|^2,\psi]-\frac{\widehat{\rho}}{R} w [\psi,u]-R\rho|\vB|^2[u,v_{||}]-R\rho v_{||}[u,\frac{|\vB|^2}{2}]+R\rho v_{||}[u,\frac{|\vB_{pol}|^2}{2}]\\
\\
\ds-\rho v_{||}\frac{\gs \psi}{R}[u,\psi]-\frac{\rho v_{||}}{R}[\psi,(\gradp \psi\cdot \gradp u)]+\frac{\rho v_{||}F_0}{R^2}(\gradp \psi\cdot \gradp (\dphi u))+\rho v_{||}\frac{F_0^2}{R^2}\dz u.\end{array}\right.
$$
In our derivation we have not treated the viscosity term $\triangle \vv$. This term in the resistive MHD is not really physical. This a very simple approximation of the stress tensor in the fluid model, which is physically justified for a gas but not for a magnetized plasma in a tokamak. It is used in JOREK  to model somewhat the effect of the stress tensor, dissipate the  energy and stabilize the system. For this reason, we propose to use a simple viscosity in the poloidal velocity equation given by $\nu \lp w=\nu \lp^2 u$  rather than compute the reduced viscosity associated to the viscosity $\triangle \vv$. We will discuss  the effects of this simplification on the total energy later.

\subsection{Energy estimate}
For the full MHD model the total energy is conserved in the ideal case and dissipated in the resistive case. To validate the derivation of the model, to validate the choice of the projection operators and to obtain the stability results, which are important for the numerical methods we prove that the reduced MHD model satisfies an energy balance equation compatible with the energy balance associated with full MHD model \cite{redMHD2}-\cite{kruger2}.
Before the energy estimate we introduce the natural Dirirchlet and Neumann boundary conditions \cite{redMHD} given by
\begin{equation}\label{bc}
\psi=u=T=\rho=0\mbox{ on }\partial \Omega\mbox{ and } \frac{\partial u}{\partial \mathbf{n}}=\frac{\partial \psi}{\partial \mathbf{n}}=0
\end{equation}
with $\mathbf{n}$ the outgoing normal to the domain. We can also use the boundary conditions 
\begin{equation}\label{bc2}
\psi=u=T=\rho=0\mbox{ on }\partial \Omega\mbox{ and } w=j=0
\end{equation}
These two boundary conditions are relatively close (see \cite{redMHD}).
\begin{lemma}
We define the energy $E=\frac{|\vB|^2}{2}+\rho\frac{|\vv|^2}{2}+\frac{1}{\gamma-1}p$.We assume that the boundary conditions are given by (\ref{bc}). If $\eta=\nu=0$ the total energy satisfies
$$
\frac{d}{dt}\int_{\Omega} E dW=\frac{d}{dt}\int_{\Omega} R E dV=0
$$
and if $\eta\neq0$ and $\nu\neq0$
$$
\frac{d}{dt}\int_{\Omega} E dW=-\nu\int_{\omega}w^2 dW-\eta\int_{\omega} j^2 dW- \eta \int_{\Omega}|\nabla_{pol}\left(\frac{\partial_{\phi} \psi}{R^2}\right)|^2dW.
$$
\end{lemma}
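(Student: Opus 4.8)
The plan is to mimic the strategy used in the full-MHD proposition: multiply each evolution equation by the appropriate conjugate quantity so that the time derivative of the total energy density emerges, then integrate over $\Omega$ with the cylindrical measure $dW=R\,dR\,dZ$ and use the boundary conditions \eqref{bc} to kill every exact divergence and every integrated-by-parts boundary contribution. The three pieces of the energy $\frac{|\vB|^2}{2}$, $\rho\frac{|\vv|^2}{2}$ and $\frac{1}{\gamma-1}p$ are handled separately, and the central claim is that all the cross terms generated by the parallel-velocity couplings cancel exactly when summed, leaving only the dissipative contributions $-\nu w^2$, $-\eta j^2$ and $-\eta|\gradp(\partial_\phi\psi/R^2)|^2$.

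\textbf{Magnetic energy.} First I would compute $\frac{d}{dt}\int_\Omega \frac{|\vB|^2}{2}\,dW$. Since $\vB_\phi=\frac{F_0}{R}\ephi$ is constant in time, only the poloidal part contributes, and $\frac{|\vB_{pol}|^2}{2}=\frac{1}{2R^2}|\gradp\psi|^2$ up to the toroidal derivative $\frac{1}{2R^2}(\partial_\phi\psi/R)^2$ term. I would multiply the $\psi$-equation \eqref{psifinal} by the correct factor (essentially $-\frac{1}{R^2}\gs\psi=-\frac{j}{R^2}$, i.e. test against $j$) and integrate. The ideal advection term $R[\psi,u]$ should produce, after integration against $j$, a Poisson-bracket expression that later pairs with a matching term from the $u$-equation; the resistive terms $\eta\gs\psi$ and $\eta\partial_{\phi\phi}\psi/R^2$ yield $-\eta\int j^2\,dW$ and $-\eta\int|\gradp(\partial_\phi\psi/R^2)|^2\,dW$ respectively, after an integration by parts that uses $\psi=\partial_\phi\psi=0$ on $\partial\Omega$.

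\textbf{Kinetic and internal energy.} Next I would test the poloidal-momentum (the big $u$-equation) against $u$, the parallel-velocity equation against $v_{||}$, and combine these with the $\rho$- and $p$-equations \eqref{rho}, \eqref{T} multiplied by $\frac{|\vv|^2}{2}$ and $\frac{1}{\gamma-1}$ respectively, exactly as in the full-MHD proposition. The key structural facts to exploit are that each Poisson bracket $[a,b]$ integrates against the cylindrical measure in a skew-symmetric way (so $\int_\Omega \frac{1}{R}[a,b]\,c\,dW$ can be redistributed among its three arguments), and that the $\ephi\cdot\rot(R^2\,\cdot)$ and $\vB\cdot(\cdot)$ projections were designed precisely so that the Lorentz force $\vJ\times\vB$ and the pressure force produce terms that cancel against the corresponding magnetic- and internal-energy contributions. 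Here I would lean on the fact that the derivation recorded every neglected cross term \eqref{u9}, \eqref{u12b}, \eqref{u13b}, \eqref{vp3}--\eqref{vp6}; the energy identity is the consistency check that these retained terms assemble into perfect divergences or mutually cancelling pairs.

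\textbf{The main obstacle} will be the bookkeeping of the parallel-velocity cross terms: the quantities $[\rho v_{||}^2 j,\psi]$, $[\widehat\rho v_{||}w,\psi]$, $[u,\widehat\rho v_{||}j]$, the $\frac{\rho v_{||}}{R^2}\gradp\psi\cdot\gradp(\dt\psi)$ coupling in the $v_{||}$-equation, and the mixed $F_0\dphi$ terms must be shown to cancel in pairs across the $u$-test, the $v_{||}$-test, and the density/pressure contributions. The crucial cancellation is between $\rho v_{||}\frac{1}{R^2}\gradp\psi\cdot\gradp(\dt\psi)$ (from \eqref{dtvpar1}) and $-\rho\gradp\psi\cdot\gradp(\dt u)$ (from \eqref{dtvpar2}) against the $\nabla\cdot(\rho\dt(v_{||}\gradp\psi))$ term in the $u$-equation, which is exactly what makes the time-derivative part of the kinetic energy close as $\frac{d}{dt}\int\rho\frac{|\vv|^2}{2}$. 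I expect every remaining term to be either an exact $\gradp\cdot(\cdots)$ or $\dphi(\cdots)$ whose integral vanishes by \eqref{bc} and periodicity in $\phi$, so that after summing all contributions only the three stated dissipative integrals survive, giving $0$ when $\eta=\nu=0$.
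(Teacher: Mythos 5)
Your plan is essentially the paper's own proof: the paper computes $\frac{d}{dt}\int_\Omega E\,dW$ by exactly the pairing you describe --- the $\psi$-equation against $-j/R^2$, the poloidal momentum equation against $u$, the parallel velocity equation against $\rho v_{||}$, and the density/pressure equations against the pieces of $\frac{|\vv|^2}{2}$ and $\frac{1}{\gamma-1}$ --- and then verifies by bracket antisymmetry, integration by parts and the boundary conditions that everything cancels except the viscous/resistive terms, including the cross-term cancellation you single out between $\rho v_{||}\frac{1}{R^2}\gradp\psi\cdot\gradp(\dt\psi)$, $-\rho\gradp\psi\cdot\gradp(\dt u)$ and $\nabla\cdot(\rho\,\dt(v_{||}\gradp\psi))$. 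The only part of the paper's execution not visible in your outline is organizational: it first derives an auxiliary evolution equation for $\widehat{\rho}=R^2\rho$ and splits your single multiplier $\frac{|\vv|^2}{2}$ into the three pieces $\frac{R^2|\gradp u|^2}{2}$, $\frac{v_{||}^2|\vB|^2}{2}$, $-v_{||}(\gradp u\cdot\gradp\psi)$, after which the cancellation bookkeeping you defer is carried out explicitly in eighteen groups.
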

\begin{proof}
To begin we compute $\frac{dE}{dt}=\frac{d}{dt}\int_{\Omega} \frac{|\vB|^2}{2}+\rho\frac{|\vv|^2}{2}+\frac{1}{\gamma-1}p dW$. We obtain
\begin{align*}
\frac{dE}{dt} & =\int_{\Omega}\dt\frac{|\nabla_{pol} \psi|^2}{2R^2}dW+\int_{\Omega}\rho\dt \frac{|\vv_{pol}|^2}{2}dW+\int_{\Omega}\rho\dt \frac{|\vv_{||}|^2}{2}dW+\int_{\Omega}\rho (\vv_{||}\cdot\dt \vv_{pol}+\vv_{pol}\cdot\dt \vv_{||})dW\\
& \quad +\int_{\Omega}\frac{|\vv_{pol}|^2}{2}\dt \rho dW+\int_{\Omega}\frac{|\vv_{||}|^2}{2}\dt \rho dW+\int_{\Omega}(\vv_{||}\cdot \vv_{pol})\dt \rho dW+\int_{\Omega}\frac{\dt p}{\gamma -1}dW.
\end{align*}
After straightforward computations we shows that the derivative of the energy is given by
\begin{align*}
\frac{dE}{dt} & =\int_{\Omega}\dt\left(\frac{|\gradp \psi|^2}{2R^2}\right)dW+\int_{\Omega}\widehat{\rho}\dt \left(\frac{|\gradp u|^2}{2}\right)dW+\int_{\Omega}\rho |\vB|^2\dt \left(\frac{v_{||}^2}{2}\right)dW\\
& \quad+\int_{\Omega}\frac{\rho v_{||}^2}{R^2}(\gradp \psi\cdot\gradp (\dt \psi))dW\quad-\int_{\Omega}\rho(\gradp u\cdot\dt(v_{||}\gradp \psi))dW-\int_{\Omega}\rho v_{||}(\gradp \psi\cdot\dt(\gradp u))dW\\
& \quad + \int_{\Omega}\frac{|\nabla_{pol} u|^2}{2}\dt \widehat{\rho}dW+ \int_{\Omega}\frac{v_{||}^2|\vB|^2}{2}\dt \rho dW -\int_{\Omega}v_{||}(\gradp u\cdot\gradp \psi)\dt \rho dW  +\int_{\Omega}\frac{\dt p}{\gamma -1} dW.
\end{align*}
The term $\int_{\Omega}\dt\left(\frac{|\gradp \psi |^2}{2R^2}\right)dW$ is equal to $\int_{\Omega}(\frac{\gradp \psi}{R}\cdot\gradp (\dt \psi))dW $.
Integrating by parts we obtain 
$$
\int_{\Omega}\dt\left(\frac{|\gradp \psi|^2}{2R^2}\right)dW=\int_{\Omega}\dt\left(\frac{|\gradp \psi|^2}{2R}\right)dV=-\int_{\Omega}\frac{\gs \psi}{R}\dt \psi dV=-\int_{\Omega}\frac{j}{R^2}\dt \psi dW.
$$
Using an integration by parts we also obtain 
$$
\int_{\Omega}\widehat{\rho}\dt \left(\frac{|\gradp u|^2}{2}\right)=-\int_{\Omega} \nabla \cdot \left(\widehat{\rho}\gradp (\dt u)\right) u dW.
$$
Consequently 
\begin{align*}
\frac{dE}{dt} &=-\int_{\Omega} (\dt \psi)\frac{j}{R^2}dW-\int_{\Omega}\left(\nabla \cdot(\widehat{\rho}\gradp (\dt u))-\nabla \cdot (\rho \dt (v_{||}\gradp \psi))\right)u dW\\
&\quad+\int_{\Omega}\rho\left(|\vB|^2\dt v_{||}+\frac{v_{||}}{R^2}(\gradp \psi\cdot\gradp (\dt \psi))-(\gradp \psi \cdot \dt (\gradp u))\right)v_{||} dW\\
&\quad + \int_{\Omega}\frac{|\gradp u|^2}{2}\dt \widehat{\rho}dW+ \int_{\Omega}\frac{v_{||}^2|\vB|^2}{2}\dt \rho dW-\int_{\Omega}v_{||}(\gradp u\cdot\gradp \psi)\dt \rho dW +\int_{\Omega}\frac{\dt p}{\gamma -1}dW.
\end{align*}
Using $2\dz u=\frac{1}{R}[R^2,u]$, we obtain
$$
\dt\rho=R[\rho,u]+\frac{\rho}{R}[R^2,u]-\frac{v_{||} F_0}{R^2}\dphi \rho-\frac{v_{||}}{R}[\rho,\psi]-\frac{ \rho}{R}[v_{||},\psi]-\frac{\rho F_0}{R^2}\dphi v_{||}.
$$
Before computing the energy estimate we give an equation on $\widehat{\rho}$.
For $\widehat{\rho}$ we multiply by $R^2$ the equation on $\rho$. We obtain
\begin{align*}
\dt \widehat{\rho}= & R^3[\rho,u]+2\widehat{\rho}\dz u+R[\psi,\rho v_{||}]-F_0\dphi(\rho v_{||})\\
= &R[\widehat{\rho},u]-\frac{\widehat{\rho}}{R}[R^2,u]+2\widehat{\rho}\dz u+R[\psi,\rho v_{||}]-F_0\dphi(\rho v_{||}).
\end{align*}
Using that $2\widehat{\rho}\dz u=\frac{\widehat{\rho}}{R}[R^2,u]$, we obtain
$$
\dt \widehat{\rho}=R[\widehat{\rho},u]+R[\psi,\rho v_{||}]-F_0\dphi(\rho v_{||}),
$$
which is equal to
$$
\dt \widehat{\rho}=R[\widehat{\rho},u]+R\rho[\psi, v_{||}]+R v_{||}[\psi,\rho ]-F_0\rho\dphi v_{||}-F_0v_{||}\dphi\rho.
$$
To compute $\frac{dE}{dt}$ we add to the final model three equations on the density:
$$
\left\{\begin{array}{l}
\ds\frac{v_{||}^2|\vB|^2}{2}\dt\rho=\frac{v_{||}^2|\vB|^2}{2R}[\widehat{\rho},u]-\frac{v_{||}^2|\vB|^2}{2}\frac{ F_0}{R^2}\dphi (\rho v_{||})-\frac{v_{||}^2|\vB|^2}{2}\frac{1}{R}[\rho v_{||},\psi],\\
\\
\ds v_{||}(\gradp u\cdot\gradp \psi)\dt\rho=\frac{v_{||}}{R}(\gradp u\cdot\gradp \psi)[\widehat{\rho},u]-v_{||}(\gradp u\cdot\gradp \psi)\frac{ F_0}{R^2}\dphi (\rho v_{||})-v_{||}(\gradp u\cdot\gradp \psi)\frac{1}{R}[\rho v_{||},\psi],\\
\\
\ds \frac{|\nabla_{pol} u|^2}{2}\dt\widehat{\rho}=\frac{|\nabla_{pol} u|^2}{2}R[\widehat{\rho},u]-\frac{|\nabla_{pol} u|^2}{2}F_0\dphi (\rho v_{||})-\frac{|\nabla_{pol} u|^2}{2}R[\rho v_{||},\psi].\end{array}\right.
$$
Now we compute
\begin{align*}
\frac{dE}{dt} & =-\int_{\Omega} (\dt \psi)\frac{j}{R^2}dW-\int_{\Omega}\left(\nabla \cdot(\widehat{\rho}\gradp (\dt u))-\nabla \cdot(\rho \dt (v_{||}\gradp \psi))\right)u dW\\
&\quad +\int_{\Omega}\rho\left(|\vB|^2\dt v_{||}+\frac{v_{||}}{R^2}(\gradp \psi\cdot\gradp (\dt \psi))-(\gradp \psi\cdot\dt (\gradp u))\right)v_{||} dW\\
&\quad+ \int_{\Omega}\frac{|\nabla_{pol} u|^2}{2}\dt \widehat{\rho}dW+ \int_{\Omega}\frac{v_{||}^2|\vB|^2}{2}\dt \rho dW-\int_{\Omega}v_{||}(\gradp u\cdot\gradp \psi)\dt \rho dW +\int_{\Omega}\frac{\dt p}{\gamma -1}dW.
\end{align*}
The derivative in time $\frac{dE}{dt}$ can by writing as the sum of 18 groups of terms: $\frac{dE}{dt}=(E1)+...+(E18)$. Now we propose to prove that each group of terms is equal to zero or negative:
\begin{align*}
(E1) &=-\int_{\Omega}\frac{1}{R}[\psi,u] j dW-\int_{\Omega}\frac{1}{R}[\psi,j]u dW=-\int_{\Omega}([\psi,u] j+[\psi,j]u) dV=0,\\
(E2) &=-\int_{\Omega}\frac{F_0}{R^2}\dphi (u) j dW+\int_{\Omega}\frac{F_0}{R^2}\dphi(j) u dW=\int_{\Omega}\frac{F_0}{R}(\dphi (u) j+ \dphi(j)u) dV=0.
\end{align*}
These results are obtained by integration by parts and using the assumptions on the boundary conditions (\ref{bc}). Now we study the term $(E3)$:
$$
(E3)=-\int_{\Omega}\frac{1}{R}[\widehat{\rho}R^2 w,u]u dW=-\int_{\Omega}[\widehat{\rho}R^2 w,u]u dV=\int_{\Omega}[u,u]\widehat{\rho}R^2 w dV=0.
$$
The term $(E4)$ corresponds to the viscosity and resistivity terms:
\begin{align*}
(E4) & =-\nu\int_{\Omega}\lp w u dW-\int_{\Omega} \eta \frac{j^2}{R^2} dW-\eta \int_{\Omega}\frac{\partial_{\phi\phi} \psi}{R^4} j dW\\
& =-\nu\int_{\Omega} w^2 dW-\int_{\Omega} \eta \frac{j^2}{R^2}dW-\eta \int_{\Omega}|\nabla_{pol}\left(\frac{\partial_{\phi} \psi}{R^2}\right)|^2dW.
 \end{align*}
To obtain this result we use $w=\lp u$ and a double integrating by parts. Now we define the term $(E5)$ which depends on the pressure.
$$
(E5)=\int_{\Omega}\frac{1}{R}[R^2,p]u dW+\frac{1}{\gamma-1}\int_{\Omega}R[p,u]dW+\frac{2\gamma}{\gamma-1}\int_{\Omega}p \dz u dW.
$$
Using $2p\dz u=\frac{p}{R}[R^2,u]$ and integrating by parts we obtain
\begin{align*}
(E5) &=\int_{\Omega}[R^2,p]u dV+\frac{1}{\gamma-1}\int_{\Omega}R^2[p,u]dV+\frac{\gamma}{\gamma-1}\int_{\Omega}p [R^2,u] dV.\\
(E5) &=-\int_{\Omega}[R^2,u]p dV-\frac{1}{\gamma-1}\int_{\Omega}p[R^2,u]dV+\frac{\gamma}{\gamma-1}\int_{\Omega}p [R^2,u] dV=0.
\end{align*}
Now we study the terms $(E6)$ and $(E7)$. In these two cases using integration by parts and the anti-symmetric properties of bracket operator we conclude.
\begin{align*}
(E6) &=-\int_{\Omega}\frac{1}{2R}[R^2|\nabla_{pol} u|^2,\widehat{\rho}]u dW+\int_{\Omega}\frac{|\nabla_{pol} u|^2}{2}R[\widehat{\rho},u]dW,\\
 & =-\int_{\Omega}\frac{1}{2}[R^2|\nabla_{pol} u|^2,\widehat{\rho}]u dV-\int_{\Omega}u[\widehat{\rho},\frac{|\nabla_{pol} u|^2}{2}R^2]dV=0,
\end{align*}
\begin{align*}
(E7) =& +\int_{\Omega}\frac{1}{R}[\widehat{\rho}v_{||}w,\psi]u dV-\int_{\Omega}\frac{\widehat{\rho}}{R}w [\psi,u]v_{||}dW.\\
= &-\int_{\Omega}\widehat{\rho}v_{||}w [u,\psi] dV-\int_{\Omega}\widehat{\rho} w [\psi,u]v_{||}dV=0.
\end{align*}
 The term $(E8)$ correspond to the coupling between the pressure and the parallel velocity $v_{||}$. We obtain
\begin{align*}
(E8) & =-\frac{1}{\gamma-1}\int_{\Omega}\frac{F_0v_{||}}{R^2}(\dphi p)dW-\frac{1}{\gamma-1}\int_{\Omega}\frac{v_{||}}{R}[p,\psi]dW-\frac{\gamma}{\gamma-1}\int_{\Omega}\gamma\frac{p}{R}[v_{||},R]dW\\
& \quad -\frac{\gamma}{\gamma-1}\int_{\Omega}\frac{F_0}{R^2}p(\dphi v_{||})dW-\int_{\Omega}\frac{F_0v_{||}}{R^2}(\dphi p)dW-\int_{\Omega}\frac{v_{||}}{R}[p,\psi]dW.
\end{align*}
Integrating by parts the terms which depend on $\frac{\gamma}{\gamma-1}$ and factorizing we obtain that $(E8)=0$. The term $(E9)$ is defined by 
\begin{align*}
(E9)& = + \int_{\Omega}\frac{v_{||}^2|\vB_{pol}|^2}{2}\frac{1}{R}[\rho v_{||},\psi]dW+\int_{\Omega}\frac{\rho v_{||}}{R}[\psi,\frac{v_{||}^2|\vB_{pol}|^2}{2}]dW\\
&\quad -\int_{\Omega}\frac{v_{||}^2|\vB_{pol}|^2}{2}\frac{F_0}{R^2}\dphi (\rho v_{||})dW-\int_{\Omega}\frac{F_0}{R^2}\rho v_{||}\dphi\left(\frac{v_{||}^2|\vB_{pol}|^2}{2}\right)dW.
\end{align*}
Integrating by parts we obtain
\begin{align*}
(E9) & = +\int_{\Omega}\rho v_{||}\left([\psi,\frac{v_{||}^2|\vB_{pol}|^2}{2}]+[\frac{v_{||}^2|\vB_{pol}|^2}{2},\psi]\right)dV\\
&\quad +\int_{\Omega}\frac{F_0}{R}\rho v_{||}\dphi\left(\frac{v_{||}^2|\vB_{pol}|^2}{2}\right)dV-\int_{\Omega}\frac{F_0}{R}\rho v_{||}\dphi\left(\frac{v_{||}^2|\vB_{pol}|^2}{2}\right)dV=0.
\end{align*}
The term $(E10)$ is defined  by
$$
(E10)=-\int_{\Omega}\frac{|\nabla_{pol} u|^2}{2}R[\rho v_{||},\psi]dW-\int_{\Omega}\frac{1}{2R}\rho v_{||}[R^2|\nabla_{pol} u|^2,\psi]dW.
$$
We apply the classical integration to conclude (E10)=0. Now we study the term $(E11)$
$$
(E11)=-\int_{\Omega}\frac{1}{R}[u,\widehat{\rho}v_{||} j ]u dW-\int_{\Omega}\frac{1}{R}[u,\widehat{\rho}(\gradp \psi\cdot\gradp v_{||})]u dW.
$$
To conclude we use the integration by parts and the fact that $[u,u]=0$. The term $(E12)$ depends of the toroidal direction
\begin{align*}
(E12) =& +\int_{\Omega}\nabla \cdot\left(\widehat{\rho}\frac{F_0}{R^2}v_{||}\gradp (\dphi u)\right)dW-\int_{\Omega}\frac{|\nabla_{pol} u|^2}{2}F_0\dphi (\rho v_{||})\\
 = & -\int_{\Omega}\widehat{\rho}\frac{F_0}{R}v_{||}\left(\gradp u \cdot\gradp (\dphi u)\right)dV-\int_{\Omega}\frac{|\nabla_{pol} u|^2}{2}F_0\dphi \left(\frac{\widehat{\rho}}{R} v_{||}\right).
\end{align*}
Using that $(\gradp u\cdot\gradp(\dphi u))=\dphi(\frac{|\nabla_{pol} u|^2}{2})$ and integrating by parts the second term we conclude.
The term $(E13)$ also depends on the toroidal derivative. It is defined by
\begin{align*}
(E13) & =-\int_{\Omega}\frac{1}{R}\dz\left(\widehat{\rho}\frac{F_0}{R^3}v_{||}\dphi(v_{||}\dz \psi)\right)dW-\int_{\Omega}\frac{1}{R}\dr\left(\widehat{\rho}\frac{F_0}{R^3}v_{||}\dphi(v_{||}\dr \psi)\right)dW\\
& \quad+\int_{\Omega}v_{||}(\gradp u\cdot\gradp \psi)\frac{F_0}{R^2}\dphi(\rho v_{||})dW+\int_{\Omega}\frac{\rho v_{||}^2}{R^2}F_0 \left(\gradp \psi\cdot\gradp (\dphi u)\right)dW.
\end{align*}
We integrate by parts the first term and expand this term, we integrate by parts the second one to obtain
\begin{align*}
(E13) &=\int_{\Omega}\rho  \frac{F_0}{R}v_{||}(\dphi(\gradp \psi)\cdot\gradp u) dV+\int_{\Omega}\rho v_{||}\frac{F_0}{R}(\gradp u\cdot\gradp \psi)\dphi(v_{||})dV\\
& \quad -\int_{\Omega}\frac{F_0\rho v_{||}}{R} \dphi (v_{||}(\gradp u\cdot\gradp \psi))dV+\int_{\Omega}\frac{\rho F_0}{R}v_{||}^2 (\gradp \psi\cdot\gradp (\dphi u))dV.
\end{align*}
To conclude we expand the third term $\frac{F_0\rho v_{||}}{R} \dphi (v_{||}(\gradp u\cdot\gradp \psi))$ in two terms $\frac{F_0\rho v_{||}^2}{R} \dphi (\gradp u\cdot\gradp \psi)$ and $\frac{F_0\rho v_{||}}{R}(\gradp u\cdot\gradp \psi) \dphi (v_{||})$. The sum of the five terms obtained is equal to zero. Now we introduce the terms $(E14)$ and $(E15)$.
\begin{align*}
(E14) &=-\int_{\Omega}\frac{1}{R}[\widehat{\rho},v_{||}(\gradp \psi\cdot\gradp u)]u dW-\int_{\Omega}\frac{1}{R}v_{||}(\gradp u\cdot\gradp \psi)[\widehat{\rho},u]dW,\\
(E15) &=-\int_{\Omega}\frac{1}{R}[\rho v_{||}^2 \gs \psi,\psi]u dW-\int_{\Omega}\rho v_{||}\frac{\gs \psi}{R}[u,\psi]dW.
\end{align*}
A integration by parts of the first term is sufficient to prove that $(E14)$ and $(E15)$ are equal to zero. The term (E16) is defined by 
\begin{align*}
(E16) = & +\int_{\Omega}\frac{1}{2R}[\widehat{\rho},v_{||}^2|\vB_{pol}|^2]u dW+\int_{\Omega}\frac{v_{||}^2|\vB|^2}{2R}[\widehat{\rho},u] dW\\
& -\int_{\Omega}R\rho v_{||}^2[u,\frac{|\vB|^2}{2}]dW-\int_{\Omega}R\rho v_{||}\frac{F_0}{R^2}[u,v_{||}]dW+\int_{\Omega}R\rho v_{||}^2[u,\frac{|\vB_{pol}|^2}{2}]dW.
\end{align*}
The fourth term of $(E16)$ is the toroidal part of the term $R\rho |\vB|^2[u,v_{||}]$ in the parallel velocity equation. Now we split $(E16)$ between two terms $(E16a)$ and $(E16b)$. $(E16a)$ is defined by
\begin{align*}
(E16a) =& +\int_{\Omega}\frac{1}{2R}[\widehat{\rho},v_{||}^2|\vB_{pol}|^2]u dW+\int_{\Omega}v_{||}^2\frac{|\vB_{pol}|^2}{2R}[\widehat{\rho},u]dW\\
& -\int_{\Omega}R\rho v_{||}^2 [u,\frac{|\vB_{pol}|^2}{2}]dW+\int_{\Omega}R\rho v_{||}^2 [u,\frac{|\vB_{pol}|^2}{2}]dW.
\end{align*}
This term is equal to zero (integrating by parts the first term is sufficient to prove this). The $(E16b)$ is defined by
$$
(E16b)=\int_{\Omega}\frac{v_{||}^2}{R}\frac{F_0^2}{2R^2}[\widehat{\rho},u]dW-\int_{\Omega}R\rho v_{||}^2[u,\frac{F_0^2}{2R^2}]dW
-\int_{\Omega}R\rho v_{||}\frac{F_0^2}{R^2}[u,v_{||}]dW
$$
We rewrite the term $(E16b)$ to obtain
\begin{align*}
(E16b) = & +\int_{\Omega}v_{||}^2\frac{F_0^2}{2R^2}[\widehat{\rho},u]dV-\int_{\Omega}\widehat{\rho} v_{||}^2[u,\frac{F_0^2}{2R^2}]dV\\
& -\int_{\Omega}\widehat{\rho} v_{||}\frac{F_0^2}{2R^2}[u,v_{||}]dV-\int_{\Omega}\widehat{\rho} v_{||}\frac{F_0^2}{2R^2}[u,v_{||}]dV
\end{align*}
We combine the second and third terms and use the anti-symmetry property of the bracket for the fourth term. We obtain
$$
(E16b)=-\int_{\Omega}\widehat{\rho} v_{||}[u,\frac{F_0^2}{2R^2}v_{||}]dV+\int_{\Omega}v_{||}^2\frac{F_0^2}{2R^2}[\widehat{\rho},u]dV+\int_{\Omega}\widehat{\rho} v_{||}\frac{F_0^2}{2R^2}[v_{||},u]dV.
$$
Now we combine the two last terms and we use anti-symmetric property of the bracket in the first to obtain
$$
(E16b)=\int_{\Omega}\widehat{\rho} v_{||}[\frac{F_0^2}{2R^2}v_{||},u]dV+\int_{\Omega}v_{||}\frac{F_0^2}{2R^2}[\widehat{\rho}v_{||},u]dV=0.
$$
The result is obtained using an integration by parts. The last $(E17)$ is given by
\begin{align*}
(E17)= &-\int_{\Omega}\frac{1}{R}[\rho v_{||}(\gradp v_{||}\cdot\gradp \psi),\psi] u dW+\int_{\Omega}\frac{1}{R}v_{||}(\gradp u\cdot\gradp v_{||})[\rho v_{||},\psi]dW,\\
&-\int_{\Omega}\frac{1}{R}\rho v_{||}^2[\psi,(\gradp \psi\cdot\gradp u)]dW-\int_{\Omega}R\rho|\vB_{pol}|^2[u,v_{||}]v_{||}dW.
\end{align*}
Firstly $R\rho|\vB_{pol}|^2[u,v_{||}]v_{||}=\frac{\rho}{R}(\gradp \psi\cdot\gradp \psi)v_{||}[u,v_{||}]$, secondly we have the identity 
$$
(\gradp v_{||}\cdot\gradp \psi)[u,\psi]=(\gradp \psi\cdot\gradp \psi)[u,v_{||}]+(\gradp u\cdot\gradp \psi)[v_{||},\psi].
$$
 Using these two identities we obtain
\begin{align*}
(E17) = & +\int_{\Omega}\rho v_{||}(\gradp v_{||}\cdot\gradp \psi)[u,\psi]dV-\int_{\Omega}\rho v_{||}^2[(\gradp u\cdot\gradp \psi),\psi]dV\\
& -\int_{\Omega}\rho v_{||}(\gradp u\cdot\gradp \psi)[v_{||},\psi]dV- \int_{\Omega}\rho v_{||}^2[\psi,(\gradp u\cdot\gradp \psi)]dV\\
& -\int_{\Omega}\rho v_{||}(\gradp \psi\cdot\gradp \psi)[u,v_{||}]dV.
\end{align*}
The sum of second and fourth terms is equal to zero (anti-symmetry property of the bracket). The sum of the other terms is equal to zero (second identity). 
To finish the proof we compute $(E18)$ defined by
$$
(E18)=\int_{\Omega} \frac{1}{R}\dz\left(\widehat{\rho}\frac{F_0^2}{R^3} v_{||}^2\right)u dW+\int_{\Omega}\rho v_{||}
^2\frac{F_0^2}{R^2}\dz u dW.
$$
This term is equal to zero because the sum of the two terms is also equal to zero (using a integration by parts).
This last result concludes the proof.
\end{proof}
This result proves that the physical energy associated with the reduced MHD system is conserved in the ideal case ($\nu=\eta=0$) as for the full MHD case and dissipated in the resistive case. As for the full MHD case the dissipation is linked to the vorticity and the current. However the dissipation terms are not exactly the same in the reduced and full MHD. In the part of the dissipation which depends on the resistive terms, we have the square of the current for the full MHD and the square of the toroidal current of the reduced MHD. Consequently during the reduction the poloidal current disappears (we can explain this by the choice of the projections during the reduction). The ordering proposed in the physics papers show that the poloidal current is smaller than the toroidal current, consequently it is logical that the reduction kills the effects of this part. In the part of the dissipation which depends on the viscous terms we observe that the part linked to compressibility (divergence of $\vv$) and the parallel vorticity disappears. At the end we conserve only the dissipation associated with the poloidal vorticity. Finally, in the ideal case the reduced model conserves the energy as for the full MHD problem and in the resistive and viscous cases the reduced model dissipates energy with  decay terms that are relatively close to the decay terms of the dissipation of the full MHD. First this result validates the reduced model since we obtain two consistent  energy balance estimates associated to the full and reduced MHD models. Secondly the dissipation result  is useful to verify at the mathematical level that the model is well-posed. For example in \cite{redMHD}-\cite{redMHD2}  the authors explain and detail the key role of the energy balance to prove the existence of weak solutions. Finally, this energy estimate is very important to ensure the numerical stability of the schemes. Indeed a way to ensure the stability is to design a numerical method which dissipates the energy at the discrete level and we cannot obtain this stability property a similar energy dissipation on the continuous model.

Let us make a first remark about the resistive term $ \frac{\partial_{\phi\phi} \psi}{R^4} $, which is the poloidal current neglected in the JOREK code. With or without this term we have a model which conserves energy in the ideal case and dissipate the energy in the resistive case. 
And a second remark about the other invariant of the MHD. The classical full MHD admits other quantities, which are conserved. The first invariant is the mass conservation. When we have written the equation on the density we have plugged our reduced velocity field and never used an approximation. Consequently we can write the density equation in a conservative form and obtain the mass conservation. The second invariant is the cross helicity which is conserved only in the incompressible case. In our case we assume that the flow is compressible consequently the cross helicity may not be conserved. After it is not clear that the balance law for the cross helicity is the same for the reduced and the full MHD. The last one is the magnetic helicity (conserved only when the resistivity is equal to zero) defined by $(\mathbf{A}\cdot \vB)$ with $\mathbf{A}$ the vector potential given by
$$
\mathbf{A}=\frac{1}{R}\psi\ephi.
$$
Consequently the equation on the magnetic helicity is the equation on $\psi$. Now using the same boundary conditions that before we obtain
\begin{align*}
(\mathbf{A}\cdot\vB) = & F_0\frac{d}{dt}\int_{\Omega}\frac{1}{R^2}\psi dW\\
=& F_0\int_{\Omega}[\psi,u]dV-F_0\int\frac{F_0}{R}\dphi u dV\\
=&F_0\int_{\Omega}(\dr(\psi\dz u)-\dz(\psi\dr u))dV-F_0\int\frac{F_0}{R}\dphi u dV\\
= & F_0\int_{\partial \Omega}\psi \nabla_{pol}u \cdot\mathbf{n} dV-F_0\int\frac{F_0}{R}\dphi u dV=0
\end{align*}
Consequently the magnetic helicity is conserved.
\section{Discretization of the model}
\subsection{Spatial discretization}
In the JOREK code, different discretization methods are applied for the toroidal direction and the poloidal plane. For the toroidal direction we use a classical Fourier expansion.  This discretization is easy to implement but generates a large matrix. Using a Fast Fourier transformation (FFT) we obtain a faster algorithm to construct the matrix and the right hand side than the classical loop used to assemble the matrix and the right hand side. For the Poloidal plane we use a classical finite element method with numerical viscosity to stabilize the method.  The elements chosen are Cubic Bezier elements which allow to guarantee $C^1$ continuity useful to discretize the fourth order operators and preserve the free divergence constraints. However this $C^1$ reconstruction is not guaranteed for the grid center and for the X-Point. Because of the higher continuity requirement, these elements need only 4 degrees of freedom per grid node compared to the Lagrangian $ \mathbb{Q}_3$ cubic element, which needs 9 degrees of freedom by grid node. Another advantage comes from the isoparametric formulation. Indeed we can discretize the geometrical quantities  like $R$ and $Z$ with Bezier Splines. This property allows to construct the  grid aligned with the magnetic surfaces easily. The details about the discretization using Bezier elements are given in \cite{huymans2}.

\subsection{Original time discretization and preconditioning}
In this section we explain the time discretization originally used in JOREK and the preconditioning used for the linear solver. The different models implemented in the JOREK code (with or without parallel velocity) can be written in the following form
$$
\dt A(\U)=B(\U)
$$
with $A$ and $B$ discrete nonlinear differential operators and $\U=(\psi,u,j,w,\rho,T,v_{||})$. For the time discretization we use the classical Crank Nicholson or a Gear second order scheme allowing to write the time scheme in the following form
$$
(1+\zeta)A(\U^{n+1})-\theta\Delta t B(\U^{n+1})=(1+2\zeta)A(\U^{n})-\zeta A(\U^{n-1})+(1-\theta)\Delta t B(\U^{n})
$$
with $\zeta$ and $\theta$ the parameters of the scheme. If $\theta=1$ and $\zeta=0$ we obtain the implicit Euler scheme, if $\zeta=0$ and $\theta=\frac12$ we obtain the Crank-Nicholson scheme and if $\theta = 1$ and $\zeta =\frac12$ we obtain the Gears scheme. These implicit schemes do not preserve the decay of the discrete time energy, because the system is too nonlinear. Finding an accurate time scheme with this property is an interesting problem for the future. 
Now we define two nonlinear vectors $G(\U)=(1+\zeta)A(\U)-\theta\Delta t B(\U)$ and $b(\U^{n},\U^{n-1})=(1+2\zeta)A(\U^{n})-\zeta A(\U^{n-1})+(1-\theta)\Delta t B(\U^{n})$. At the end we want to solve the following nonlinear system  
$$
G(\U^{n+1})=b(\U^n,\U^{n-1}).
$$
A first order linearization is applied in the original code to obtain the following linear system
$$
\left(\frac{\partial G(\U^{n})}{\partial\U}\right)\delta \U^{n+1}=-G(\U^n)+b(\U^n,\U^{n-1})=R(\U^n)
$$
with $\delta \U^{n+1}=\U^{n+1}-\U^{n}$ and the Jacobian $J_n=\frac{\partial G(\U^{n})}{\partial\U}$. To solve this system we use the classical GMRES method with left preconditioning \cite{precon1,precon2}.
The principle of the left preconditioning is to replace the solver $J_n\delta \U^{n+1}=R(\U^n)$ by $M_n^{-1}J_n\delta \U^{n+1}=M_n^{-1}R(\U^n)$. The last system can be split between two steps. First we solve exactly
$$
M_n \delta \mathbf{y}=R(\U^n)
$$
and then we solve with the GMRES method
$$
M_n^{-1}J_n\delta \U^{n+1}=\delta \mathbf{y}.
$$
It is necessary to obtain the final algorithm that the preconditioning matrix $M_n$ is invertible. The idea currently followed in the code is to write the Jacobian by block, each block corresponding to the coupling terms between two Fourier modes. Under the assumption of weak coupling it is possible to eliminate the non diagonal blocks. We obtain a diagonal block matrix where the blocks correspond to the equations for each Fourier mode. To compute the inverse we use a direct solver (LU method for example) to obtain the inverse of each block and consequently the inverse of $M_n$. To minimize the CPU cost we don't invert $M_n$ at each time step, but only when  the convergence for the previous linear step is too slow.

\subsection{Nonlinear time solvers}
The first order linearization previously used may not be the optimal choice to solve the problem in the nonlinear phase of the run. Consequently we propose to replace this linearization by a Newton procedure. Since we use an iterative solver to compute the solution of the linear system, it will be interesting to use an inexact Newton procedure \cite{IneNew1}-\cite{IneNew2}. This variation of the Newton method means that the convergence criterion of the GMRES method  is adapted using the nonlinear residual and the convergence of the Newton procedure. The aim is to use the nonlinear convergence to minimize the number of GMRES iterations. Indeed it is not necessary to solve with a high accuracy the linear system but just enough at each step to converge to the solution of the nonlinear system at the end. Let us now detail the Inexact Newton algorithm:
~\\

\textbf{Algorithm}
~\\
\begin{itemize}
\item At the time step $n$, we compute $b(\U^n,\U^{n-1})$, $G(\U^n)$.
\item We choose $\epsilon^{GMRES}$ and the initial guess $\delta \U_0$. 
\item At each iteration $k$ of the Newton method we have the solution $\U_k$.
\item We compute $G(\U_k)$ and the Jacobian $J_k$.
\item We solve the linear system with GMRES $J_k\delta \U_k=-G(\U_k)+b(\U^n,\U^{n-1})=R(\U_k,\U^n)$ and the following convergence criterion
$$
\frac{||J_k \delta \U_k - R(\U_k,\U^n)||}{||R(\U_k,\U^n) ||}\leq \epsilon_{GMRES}^k
$$
with
$$
\epsilon_{GMRES}^k=\gamma\left(\frac{||R(\U_k,\U^n)||}{|| R(\U_{k-1},\U^n)||}\right)^{\alpha}.
$$
\item We iterate with $\U_{k+1}=\U_{k}+\delta\U_k$. 
\item We apply a convergence test (for example $||R(\U_k,\U^n)||<\epsilon_{a}+\epsilon_{r}||R(\U^n)||$).
\item When the Newton method has converged we define $\U^{n+1}=\U_{k+1}$. 
\end{itemize}
Here $\epsilon_{a}$ and $\epsilon_{r}$ are the relative and absolute stopping criteria for the Newton procedure.
We couple this algorithm with an adaptive time stepping which allows to use large time steps in the linear phase and smaller time steps in the nonlinear phase. Actually the principle is simple: if the Newton process converges very quickly we increase the time step and if the convergence is slow we decrease the time step for the following iteration. If the Newton process does not converge or if $||R(\U_{k+1},\U^n)||>||R(\U_k,\U^n)||$ during two or three consecutive linear steps we decrease the time step and restart the Newton iterations. To have a smooth evolution of the time step it is necessary to avoid a large increasing or decreasing of the time step.

\section{Numerical results}
In general the different test cases used in this paper have the same structure. First we compute the equilibrium on the poloidal grid (Fig 1., left), compute the aligned grid (Fig 1., right) and begin the time loop. At the beginning of the time loop  peeling-ballooning modes \cite{snyder,holzl} set in which are responsible for the appearance of edge localized modes (ELMs). These linear instabilities are driven by large pressure gradients 
(steep pressure pedestal) and large current densities in the edge.
During these instabilities the energy associated with the non principal modes grows exponentially. The background profiles are modified.  When the energies associated with the non principal modes are sufficiently large, the pressure gradients get smaller which stabilizes the instability.  This is the nonlinear saturation phase. The implicit time methods are known to be stable without restriction on the time step, however this type of result is valid for stable physical dynamics and stable models. In our cases we have physical instabilities consequently the numerical stability is not ensured.
Typically we will show that if the numerical error (time error, linearization error)  becomes too large the numerical simulation does not capture correctly the beginning of the salutation phase and generates critical numerical instabilities.\\
~\\
In this section we present some numerical results for the models with and without parallel velocity. We add to the reduced MHD models, numerical diffusion operators for each equation and  two anisotropic diffusion operators on the density and the temperature (density and pressure equations). For example, for the pressure equation we add the following  diffusion operator
$$
\nabla \cdot (k_{||}\nabla_{||}T+k_{\perp}\nabla_{\perp}T)=\nabla \cdot ((k_{||}-k_{\perp})\nabla_{||}T+k_{\perp}\nabla T)
$$ 
with 
$\nabla_{||} T=\frac{\vB}{||\vB||}\cdot(\frac{\vB}{||\vB||}\cdot\nabla T)$ and $\nabla_{\perp}=\nabla - \nabla_{||} $.\\
We propose to compare the different methods (Exact and Inexact Newton methods and classical linearization)  mainly in the nonlinear phase. Indeed in the linear phase  the classical method is clearly more efficient. In this phase the preconditioning is very efficient and the GMRES solver converges quickly (between 1 and 5 iterations). The Newton procedure converges with 3 iterations in general. Consequently using the Newton method the cost is clearly higher for each time step in the linear phase. In the nonlinear phase the situation is more complicated. The nonlinear phase begins when the quantities associated with the non principal modes have the same order of magnitude as the quantities associated to the principal mode. To compare the numerical results, we define the beginning of the nonlinear phase as the time where the kinetic and magnetic energies for $n \neq 0$ are at the level of the energies associated to $n = 0$.
To compare the classical linearization and the Newton procedure we use the adaptive time stepping. If the algorithm for one time step does not converge we recompute it with a smaller time step (typically $\Delta t_{new} = 0.8 \Delta t_{old} $). For the Newton and the linearization methods the factorization is recomputed for each time step and during a Newton step the factorization is recomputed if the number of GMRES iterations associated with the two last Newton steps is superior to 50.

\subsection{Model without parallel velocity}
\subsubsection{First test case }
This first case corresponds to a simplified equilibrium configuration associated to the JET reactor.
We solve the model without parallel velocity. In this case the numerical viscosity  is zero and the numerical resistivity is $10^{-10}$. The physical viscosity and resistivity, dependent on the temperature are given by $\eta(T)=2 \times 10^{-6} T^{-\frac52}$ and $\nu(T)=4 \times 10^{-6} T^{-\frac52}$. Note that the energy estimate for our model is valid only for a constant viscosity coefficient, which is not the case here. This point will be discussed in the future. We consider a geometry with X-point. The number of degrees of freedom for these simulations is around $1.5\times 10^5$ with around $9.0\times 10^7$ nonzero coefficients. In the toroidal direction we use three Fourier modes $1$, $\operatorname{cos}(n_p \phi)$ and $\operatorname{sin}(n_p\phi)$ with $n_p$ a parameter called the periodicity.
 For the linearization procedure the criterion of convergence for the GMRES procedure is $\eps=10^{-8}$. 
For the Newton procedure the maximum number of Newton iteration is 10 and the criterion of convergence for the Newton procedure $\eps_{a}=10^{-5}$, the $\eps_{GMRES}^0$ of the GMRES convergence  criterion is $0.0005$.  Using $\Delta t = 30$ we compare the results for the linearization method, the exact Newton method and the inexact Newton method. These results are given between the time 1400 and 3500 corresponding to the nonlinear saturation phase. The final time is $3500$. The code is executed with 2 MPI and 16 OpenMP threads per MPI process. In the tables (Tab 1) - (Tab 2) - (Tab 3) we give the average of different quantities associated to the solver during one time step.

\begin{table}[h]
\begin{tabular}{| c ||  c| c| c|}
\hline
\multicolumn{4}{| c |}{Linearization method}\\
\hline
$\Delta t=$ & GMRES Iter. & $LU$ fact. & time\\
\hline
30 &  19  & 1 &  53.25\\
\hline
\end{tabular}
\caption{For the linearization method the average number of GMRES iterations and $LU$ factorizations per time step are given as well as the wall clock time}
\end{table}

\begin{table}[h]
\begin{tabular}{| c ||  c| c| c| c| c|}
\hline
\multicolumn{6}{| c |}{Exact Newton}\\
\hline
$\Delta t=$ & GMRES Iter. & $LU$ fact. & Newton iter. & Total GMRES iter. & time\\
\hline
30 & 19.8 &  1.05  & 3 &  59 & 79.6\\
\hline
40 & 26.6 & 1.28 & 3.2 & 85.5 & 102\\
\hline
\end{tabular}
\caption{For the exact Newton method the average number of total GMRES iterations, $LU$ factorizations, number of Newton iteration and number to GMRES iteration per Newton step per time step are given as well as the wall clock time.}
\end{table}

\begin{table}[h]
\begin{tabular}{| c ||  c| c| c| c| c|}
\hline
\multicolumn{6}{| c |}{Inexact Newton}\\
\hline
$\Delta t=$ & GMRES Iter. & $LU$ fact. & Newton iter. & Total GMRES iter. & time\\
\hline
30 & 3.3 &  1  & 5.7 &  18.7 & 76.25\\
\hline
40 & 5.4 & 1 & 5.8 & 31 & 82.9\\
\hline
\end{tabular}
\caption{For the inexact Newton method the average number of total GMRES iterations, $LU$ factorizations, number of Newton iteration and number to GMRES iteration per Newton step per time step are given as well as the wall clock time.}
\end{table}

Some remarks about these results. It is clear that the CPU cost associated with the Newton procedure is higher compared to the classical linearization for the same time stepping. This result is expected, indeed by definition of the Newton method, the number of linear problems solved is larger with the Newton procedure.
First in the nonlinear phase  we remark that the Newton procedure is also less performing, but using an inexact Newton method we can reduce the  CPU cost. In the Tables 2 and 3 we remark that for the time step $\Delta t =30$, the main difference between the Inexact and exact Newton method is small, but for $\Delta t=40$ the difference is larger. The  main difference between exact and inexact Newton method can be explained by the fact that the number of GMRES iterations is larger using the exact Newton method and  consequently the factorization for the preconditioning is called more often. At the end we remark that the inexact Newton method is clearly more efficient when the problem is more nonlinear and similar when the problem is not too nonlinear. This result verifies the usefulness of the inexact Newton method.

Now we propose to compare the linearization method and the inexact Newton method. In the nonlinear phase the difference is less important. Indeed in the nonlinear phase the number of GMRES iterations for each linear problem is larger. Using an inexact Newton procedure we have more linear problems to solve but each linear system is solved with a small accuracy. Consequently the cost associated with each linear system is smaller when we use the inexact Newton procedure. The Tables 1 - 3 for $\Delta t =30$ show that the total GMRES iterations for one time step are similar between an inexact Newton procedure and the linearization procedure. Consequently the additional cost associated with the inexact Newton method come from the computation of the matrix and in this case is around 1.5 which is an acceptable additional cost. Additionally, the parallel scaling is better for the construction of the matrix than the iterative solver and the preconditioning. Consequently with more MPI process the difference between the CPU cost associated with the inexact Newton method and the linearization method can be reduced. \\

Secondly we compare the two methods with $\Delta t =40, 50, 60$. For the Newton procedure the maximum number of Newton iterations is 20 and the criterion of convergence for the Newton procedure $\eps=10^{-7}$, the $\eps_0$ of the GMRES convergence  criterion is $0.0005$. We plot the kinetic and magnetic energies associated with the different modes for the two procedures and the different time steps (Fig 3.) and (Fig 4.).

\begin{figure}[t]
\begin{center}
    \includegraphics[scale=.6]{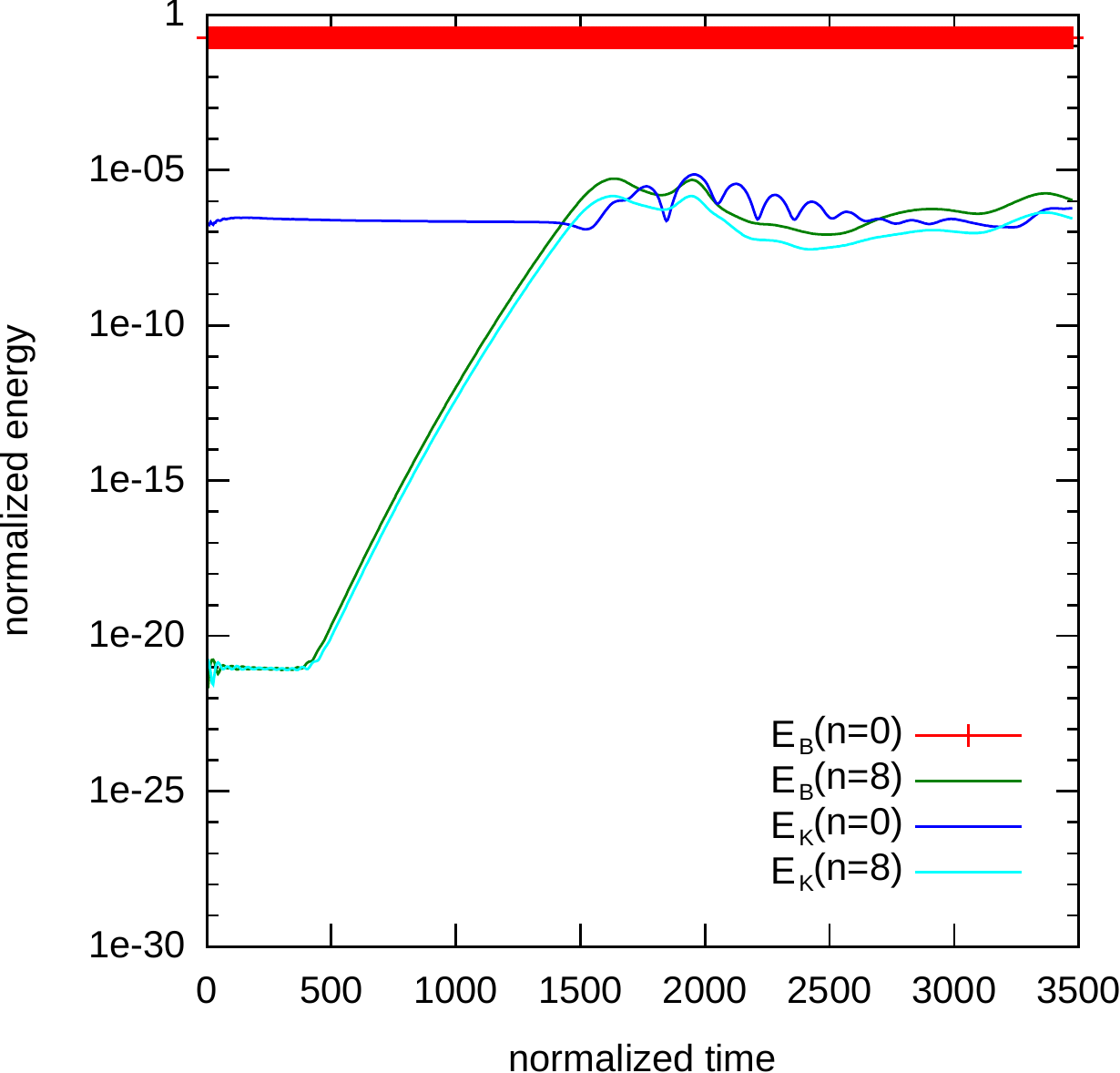}
    \end{center}
  \caption{Kinetic and Magnetic energies for $\Delta t=5$ gives by the Newton method. This solution can be use as reference to validate the solutions computed with very large time steps. The coefficient $n$ corresponds to the periodicity. In this case the periodicity is equal to $8$.}
\end{figure}

\begin{figure}[t]
\begin{center}
    \includegraphics[scale=.5]{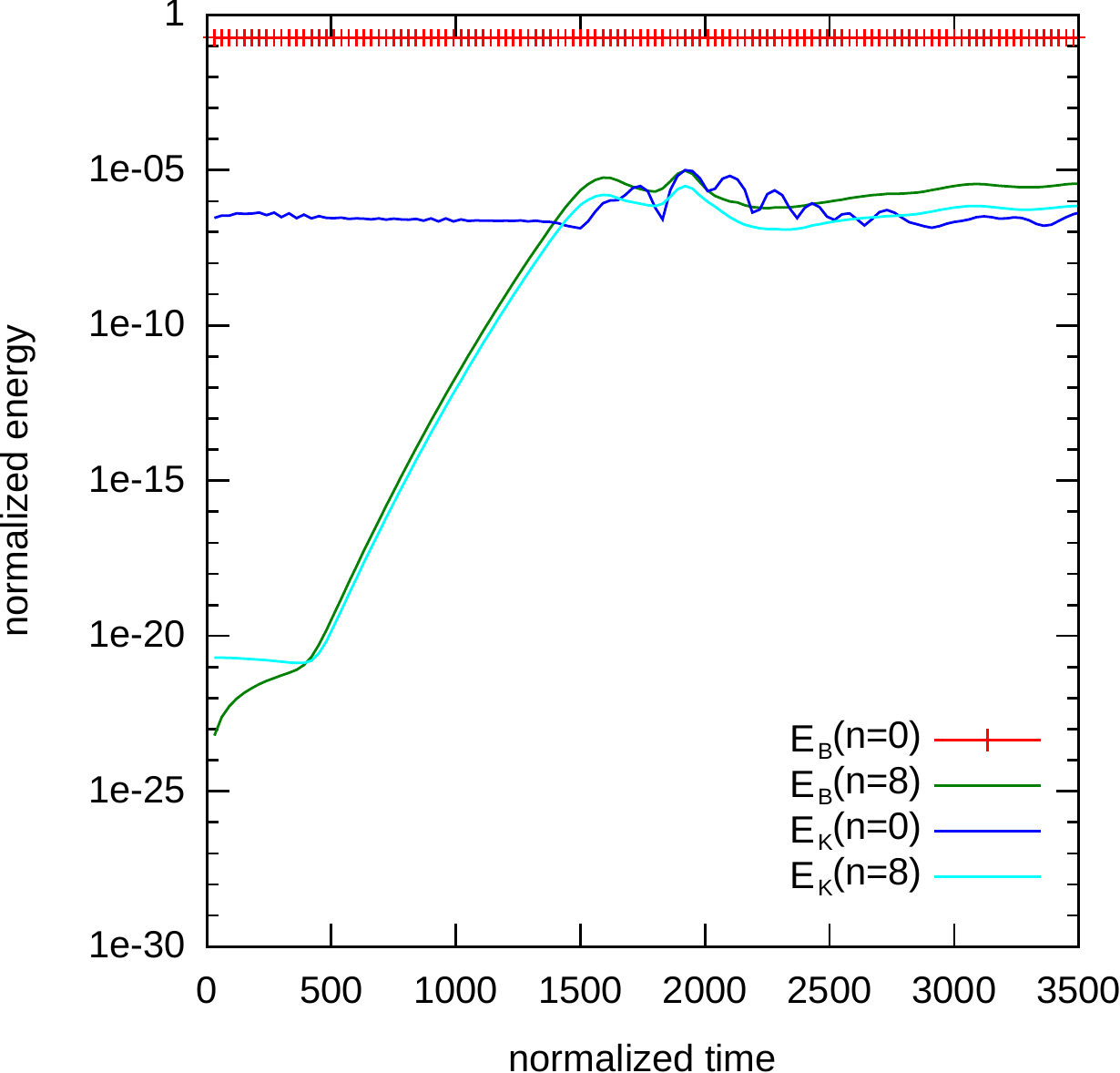}\hspace{40pt}\includegraphics[scale=.5]{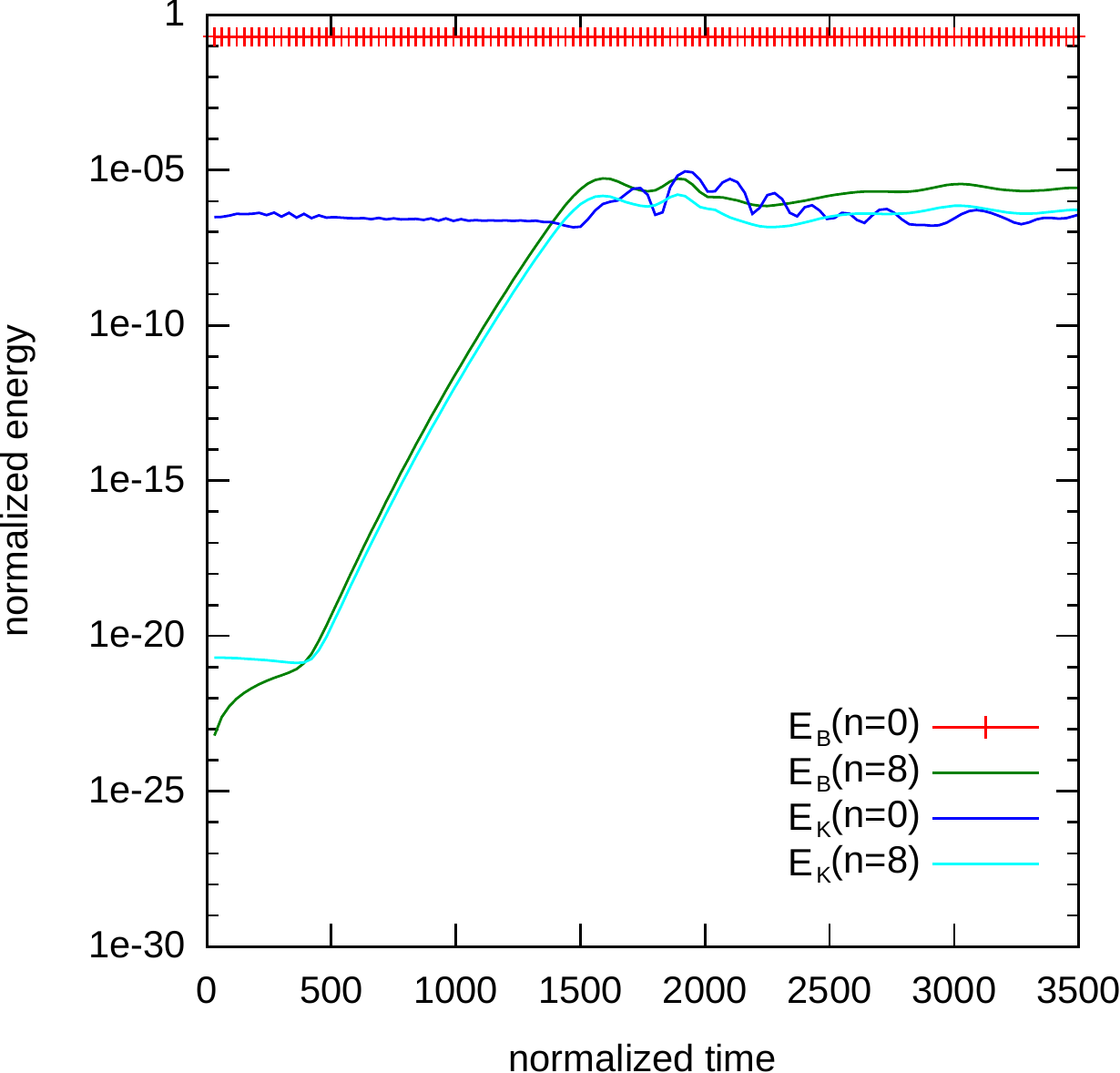}
    \end{center}
\begin{center}
    \includegraphics[scale=.5]{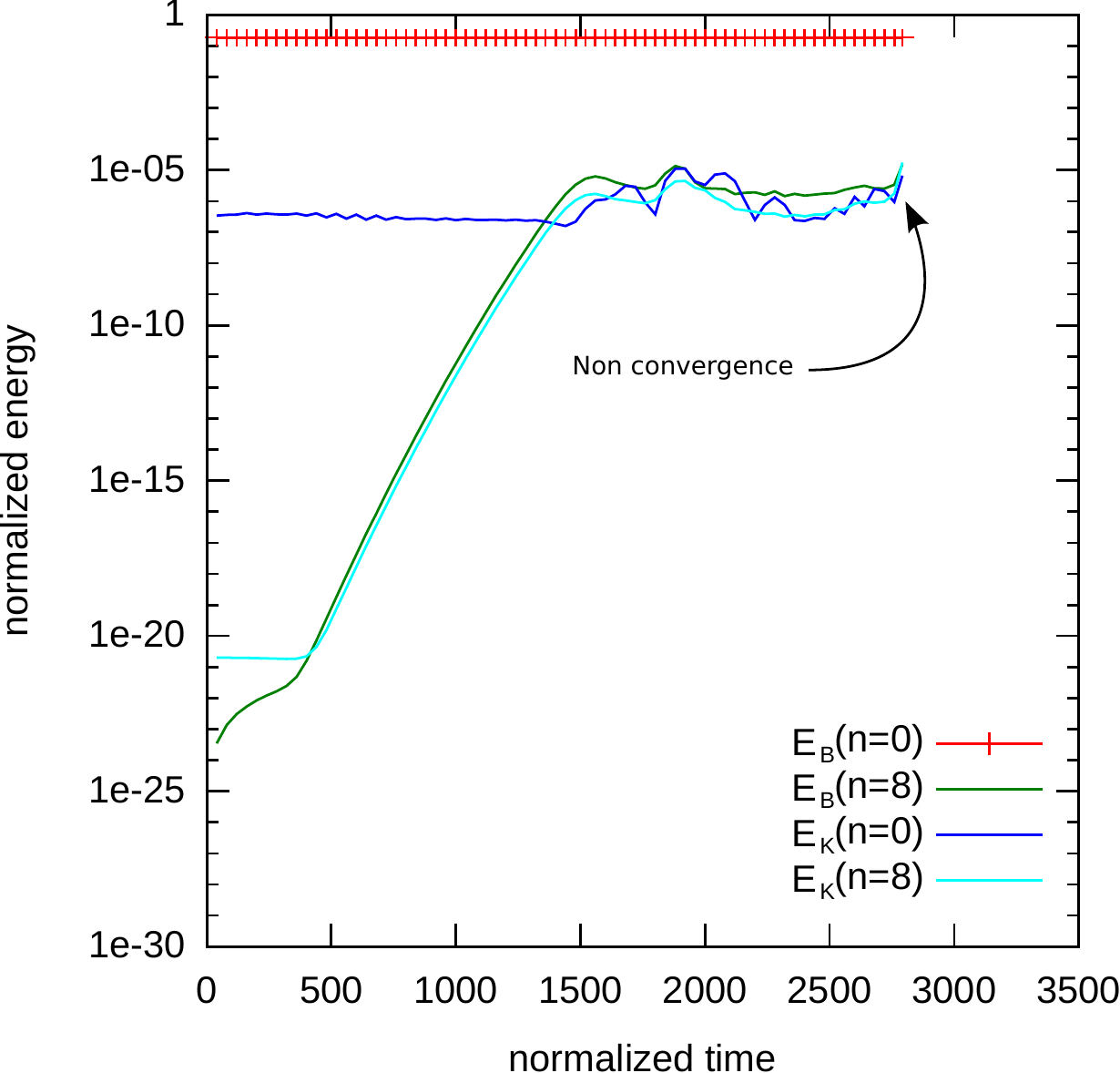}\hspace{40pt}\includegraphics[scale=.5]{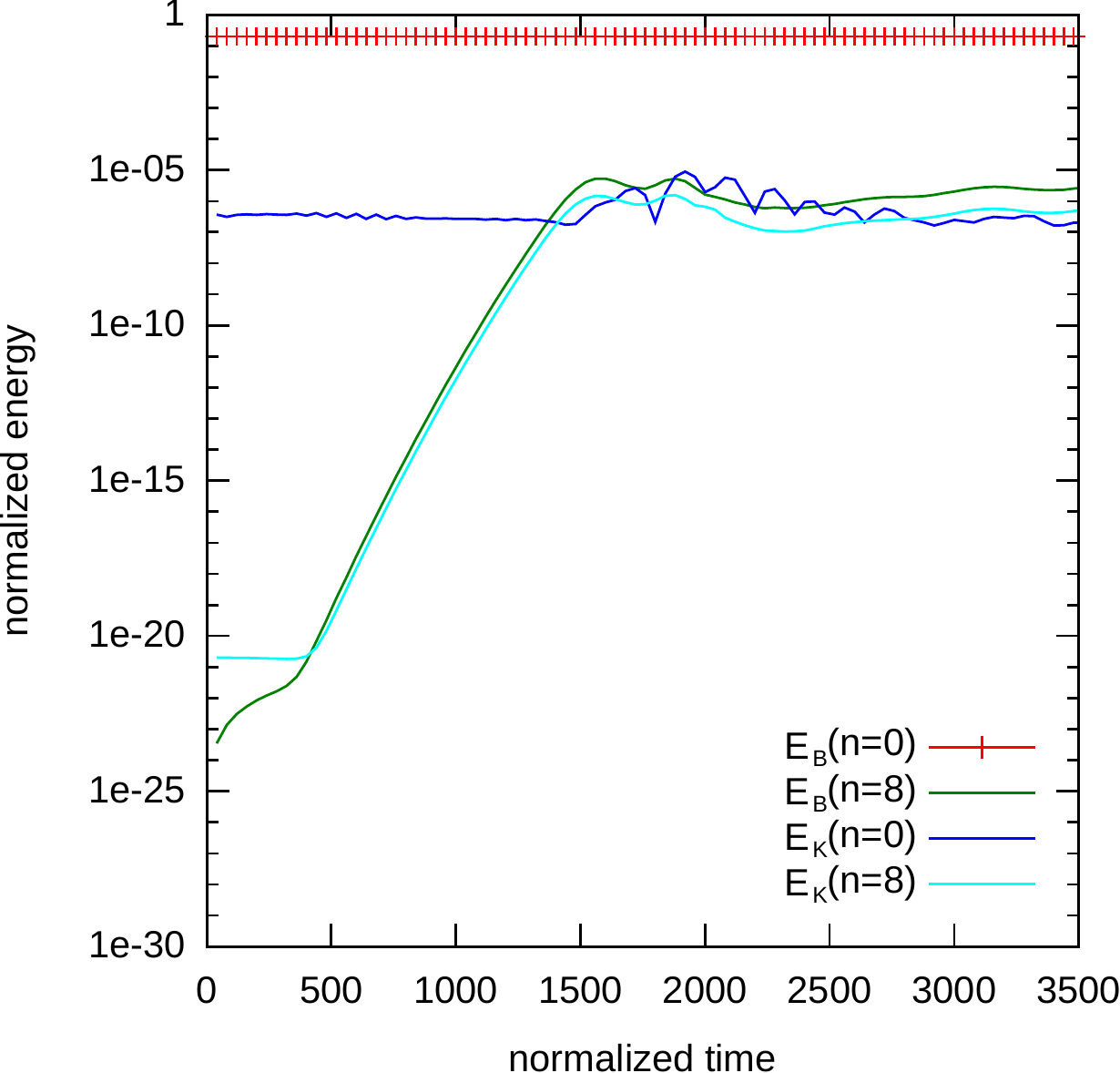}
\end{center}
\begin{center}
    \includegraphics[scale=.5]{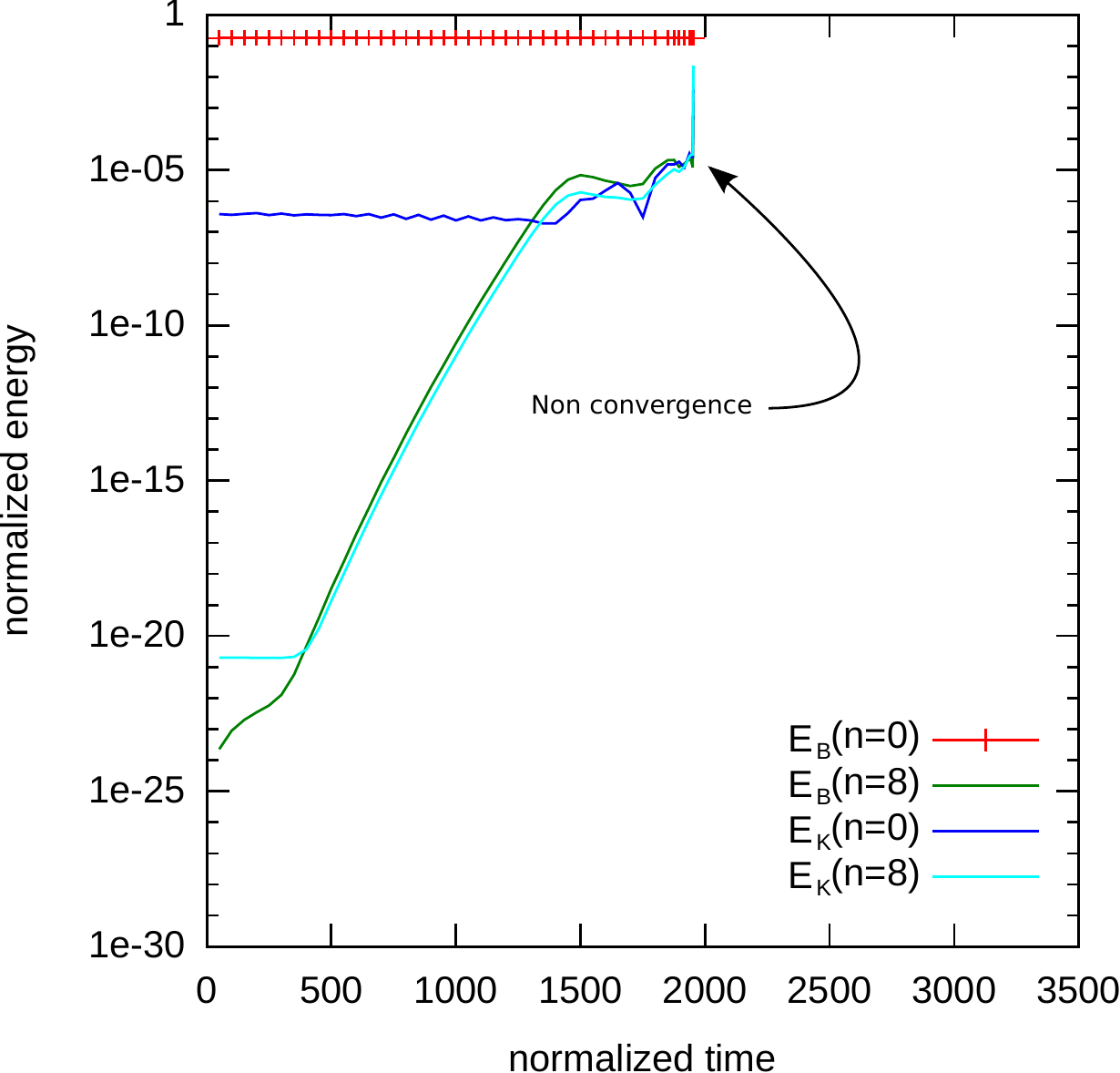}\hspace{40pt}\includegraphics[scale=.5]{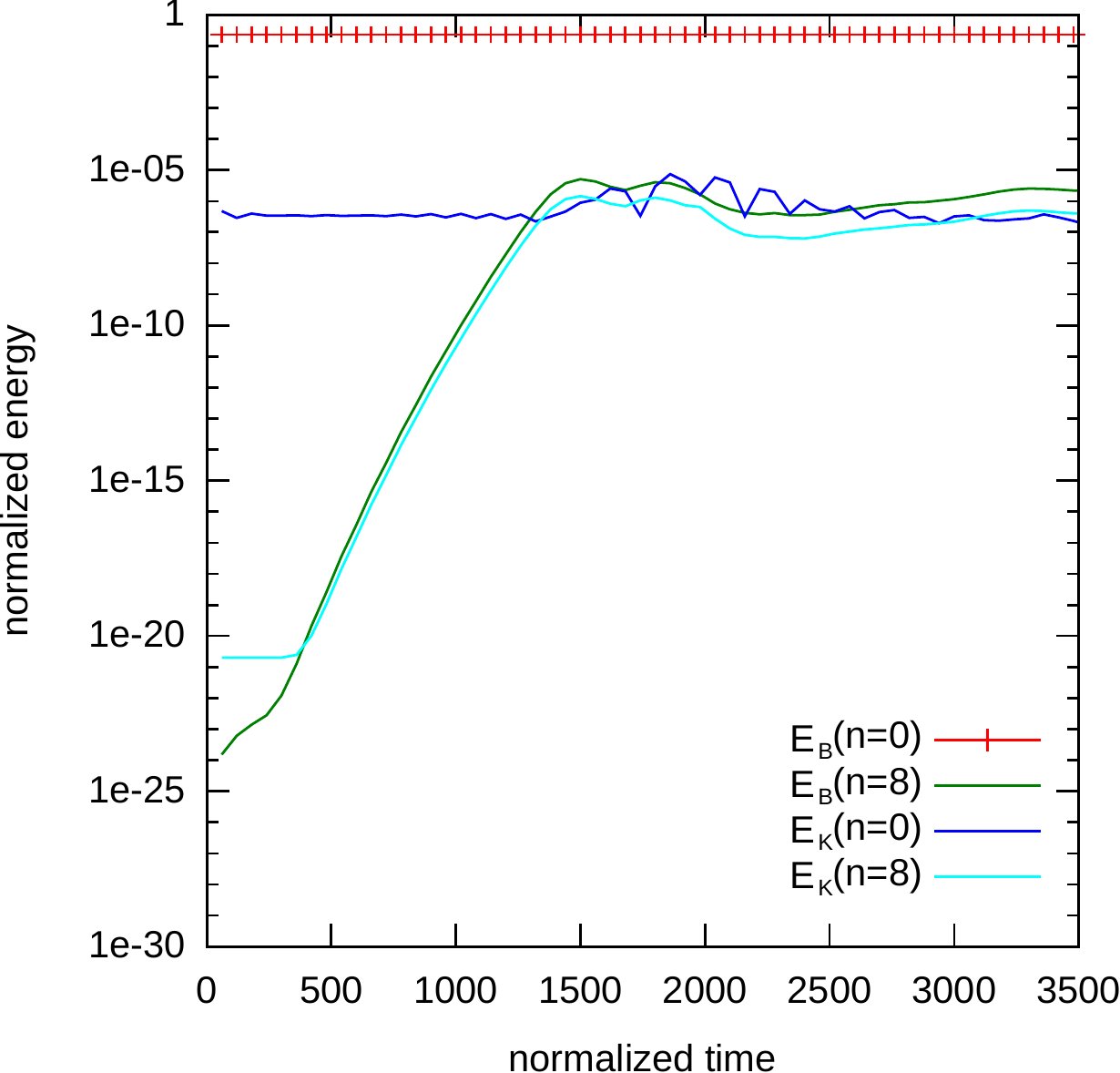}
\end{center}
  \caption{In the left Kinetic and Magnetic energies for Linearization method for $\Delta t=30,40, 50$. In the left Kinetic and Magnetic energies for Newton method for $\Delta t=30, 40, 60$}
\end{figure}

These results show that the Newton procedure with adaptive time stepping is more robust than the classical linearization and allows to use a larger time step.
When we use the classical linearization  with a very large time step, the numerical error linked to the time discretization and the linearization becomes too large such that consequently the numerical scheme does not capture correctly the beginning of the saturation phase. In this case, numerical instabilities appear and the iterative solver does not converge after the beginning of the numerical instability. If we use an adaptive time stepping the situation is the same because in general the scheme computes the beginning of the numerical instabilities and at this moment is too late to adapt and decrease the time step.

With the Newton procedure the situation is different. First the error of linearization and consequently the global numerical error is smaller so we can use larger time steps and capture correctly the beginning of the saturation phase. Secondly we don't have the problem associated with the numerical instabilities with the Newton procedure as the Newton method does not converge in case of the numerical instability such that the time step is recalculated with smaller $\Delta t$. We conclude that the adaptive time stepping works with the Newton method because this procedure detects the beginning of the numerical instabilities by non convergence of the method contrary to the linearization, for which in order to continue the computation it is necessary to adapt the $\Delta t$ before the beginning of the  numerical instability.
 Consequently the Newton procedure is more robust, allowing an efficient adaptive time stepping, which avoids numerical instabilities for large time steps and non convergence issues.
 The figure (Fig 4.)  shows that the code with the linearization method does not converge with $\Delta t=40$ contrary to the Newton method which converges even with $\Delta t=60$. 
 This test case is not too nonlinear and consequently not too stiff for the numerical method. For more nonlinear test cases the Newton procedure gives better results when the problems get stiffer.

\subsubsection{Second test case }
This second test case corresponds to a realistic ASDEX Upgrade equilibrium configuration with unrealistically large resistivity which makes the instability especially violent.
We solve the model without parallel velocity. In this case the numerical viscosity and the numerical resistivity are close to $10^{-11}$. The physical viscosity and resistivity are dependent of the temperature : $\eta(T)=2 \times 10^{-5} T^{-\frac52}$ and $\nu(T)=3 \times 10^{-5} T^{-\frac52}$. We consider a geometry with X-point. In the toroidal direction we use three Fourier modes $1$, $\operatorname{cos}(n_p \phi)$ and $\operatorname{sin}(n_p\phi)$ with $n_p$ a parameter called the periodicity. The final time is $450$.\\

For the linearization procedure the maximum number of GMRES iteration is 500 and the convergence criterion  for the GMRES procedure $\eps=10^{-8}$. 
First we propose to compare the two methods  for $\Delta t =5, 10, 20$. For the Newton procedure the maximum number of Newton iterations is 10 and the the criterion of convergence for the Newton procedure $\epsilon=10^{-5}$, the $\eps_0$ of the GMRES convergence  criterion is $0.0005$. \\

\begin{figure}[t]
\begin{center}
    \includegraphics[scale=.58]{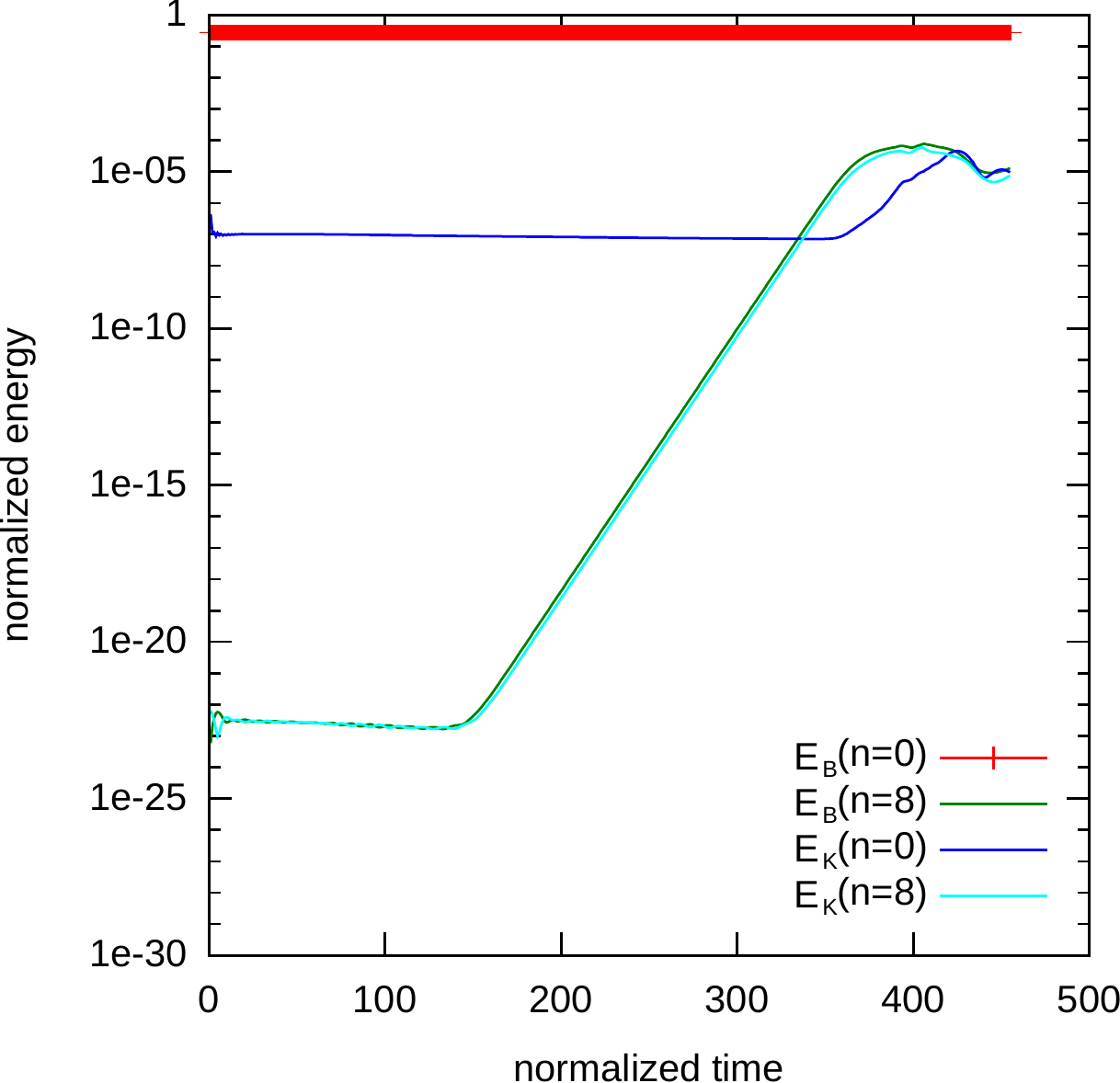}\hspace{10pt}\includegraphics[scale=.58]{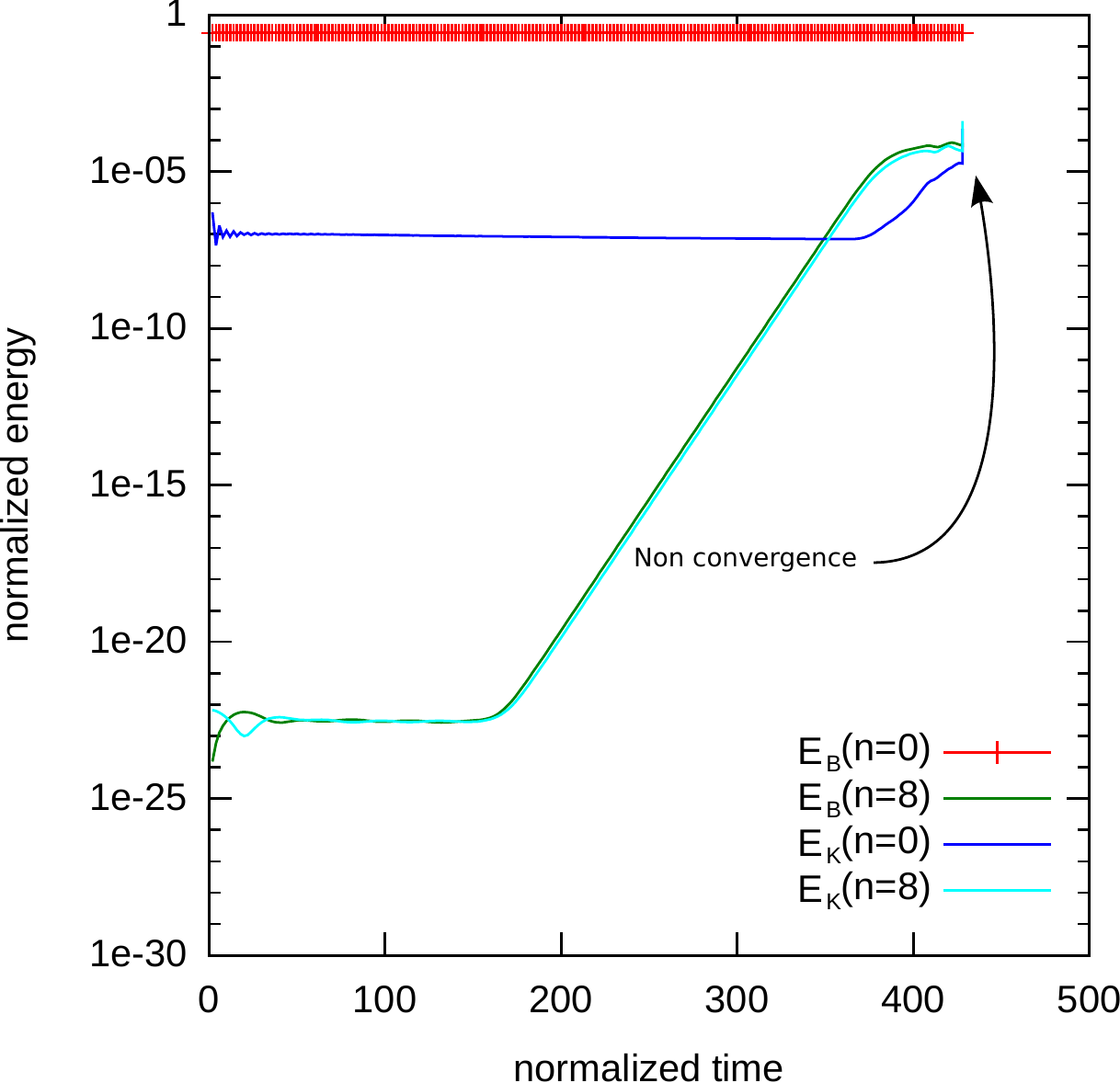}
 \end{center}
 \begin{center}   
    \includegraphics[scale=.58]{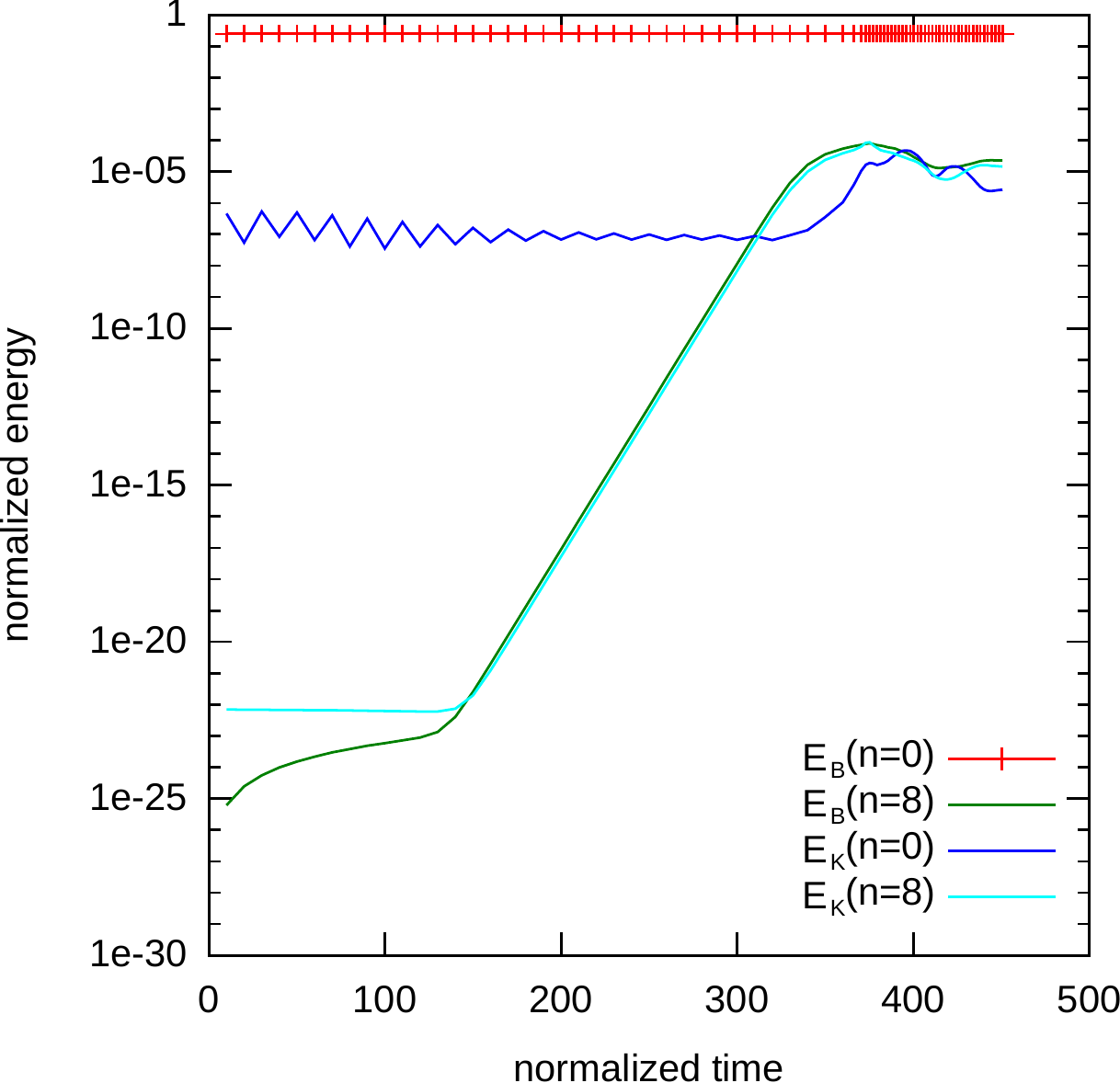}
\end{center}
  \caption{In the left Kinetic and Magnetic energies for Linearization method for $\Delta t=1$. In the middle Kinetic and Magnetic energies for Linearization method for $\Delta t=2$. In the right Kinetic and Magnetic energies for the Newton method for $\Delta t=10$ with adaptive time stepping.}
\end{figure}

This test case with violent physical instabilities allows to confirm the previous remarks about adaptive time stepping and numerical instabilities. First if we choose a too big time step with the linearization method, we have a numerical instability which appears and the adaptive time stepping is not efficient. 

\begin{table}[h]
\begin{tabular}{| c | c |  c | c |  c| }
\hline
 GMRES $\Delta t=1$  &  Newton + adaptive time method \\
\hline
 18800 & 7600 \\
 \hline
\end{tabular}
\caption{CPU time of the simulation for the GMRES method with $\Delta t=1$ and for Newton method with adaptive time method (initial time step $\Delta t=10$).}
\end{table}

Due to the violent physical instabilities the problem is strongly nonlinear in the saturation phase.  Contrary to the previous test case, using the Newton method allows to reduce significantly the CPU cost for the total run (Tab 5.).\\
The inexact Newton method with adaptive time stepping is more robust than the linearization method and allows to reduce the CPU costs for highly nonlinear cases because of the possibility to take larger time steps.

\subsection{Model with parallel velocity}
This test case is the same as the one used in section 4.1.1 but we solve  the model with parallel velocity.
First we compare the two methods in the nonlinear phase with $\Delta t =20$. For the Newton procedure the maximum number of Newton iteration is 10 and the the criterion of convergence for the Newton procedure $\eps=10^{-5}$, the $\eps_0$ of the GMRES convergence  criterion is $0.0005$.  These results are given between the time 1250 and 3500  which correspond to the saturation phase (stiff part of the computation).

\begin{table}[h]
\begin{tabular}{| c ||  c| c| c|}
\hline
\multicolumn{4}{| c |}{Linearization method}\\
\hline
models & GMRES Iter. & Facto. & time\\
\hline
with neglected terms &  25.4  & 1 &  75.7\\ 
\hline
without neglected terms  & 28 & 1 & 83.6 \\
\hline
\end{tabular}
\caption{Average of number of GMRES iteration and factorization computation (preconditioning) during a time step.  Linearization method. $\Delta t=20$}
\end{table}

\begin{table}[h]
\begin{tabular}{| c ||  c| c| c| c| c|}
\hline
\multicolumn{6}{| c |}{Inexact Newton method}\\
\hline
models & GMRES Iter. & Facto. & Newton iter. & Total GMRES iter. & time\\
\hline
with neglected terms & 5.1 & 1  & 6.4 & 32.7 & 119.3\\
\hline
without neglected terms  & 5.2 & 1 & 6.4 & 33.4 & 122.5\\
\hline
\end{tabular}
\caption{Average of number of GMRES iteration and factorization computation (preconditioning) during a time step.  Inexact Newton method. $\Delta t=20$}
\end{table}

\begin{table}[h]
\begin{tabular}{| c ||  c| c| c| c| c|}
\hline
\multicolumn{6}{| c |}{Inexact Newton method}\\
\hline
models & GMRES Iter. & Precon. called & Newton iter. & Total GMRES iter. & time\\
\hline
with neglected terms & 10.9 & 1.1  & 6.95 & 75.6 & 152\\
\hline
without neglected terms  & 8.7 & 1 & 6.7 & 58 & 142\\
\hline
\end{tabular}
\caption{Average of number of GMRES iteration and preconditioning called during a time step.  Inexact Newton method. $\Delta t=20$}
\end{table}

 The conclusions on the comparison between the Newton procedure and the linearization procedure are similar to the conclusion for the first test case: in the nonlinear phase the new method costs around 1.4 - 1.5 times more, but this additional cost can be reduced using a larger time step. Indeed using the Newton procedure (as previously) we can use larger time steps  than with the original linearization method without running into numerical instabilities. For example in this latter case the linearization method is unstable with $\Delta t =25 $ and the Newton method is stable with $\Delta t =40$ (Fig 6.). 

\begin{figure}[t]
\begin{center}
    \includegraphics[scale=.53]{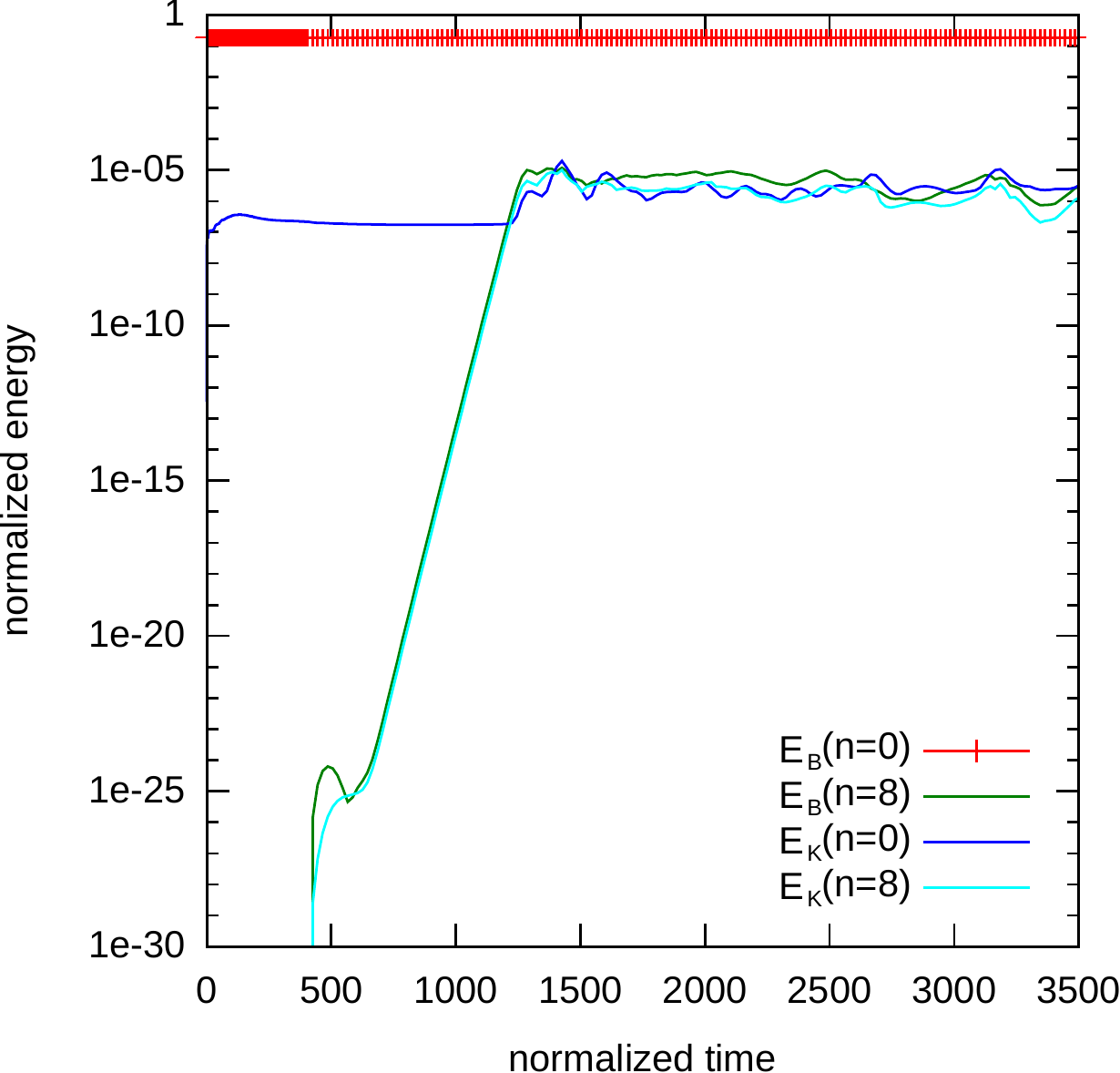}\hspace{30pt}\includegraphics[scale=.53]{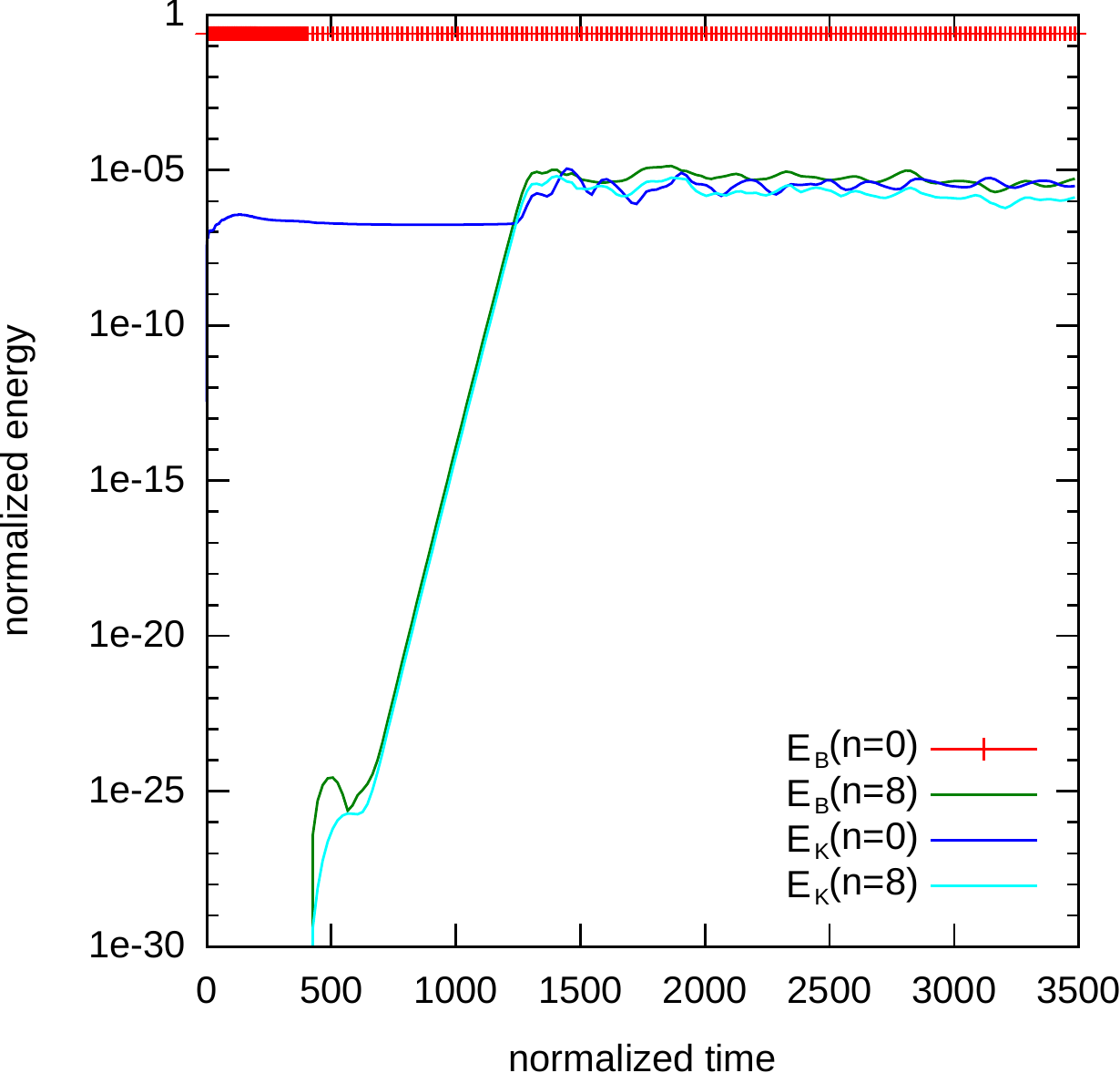}
    \end{center}
\begin{center}
    \includegraphics[scale=.53]{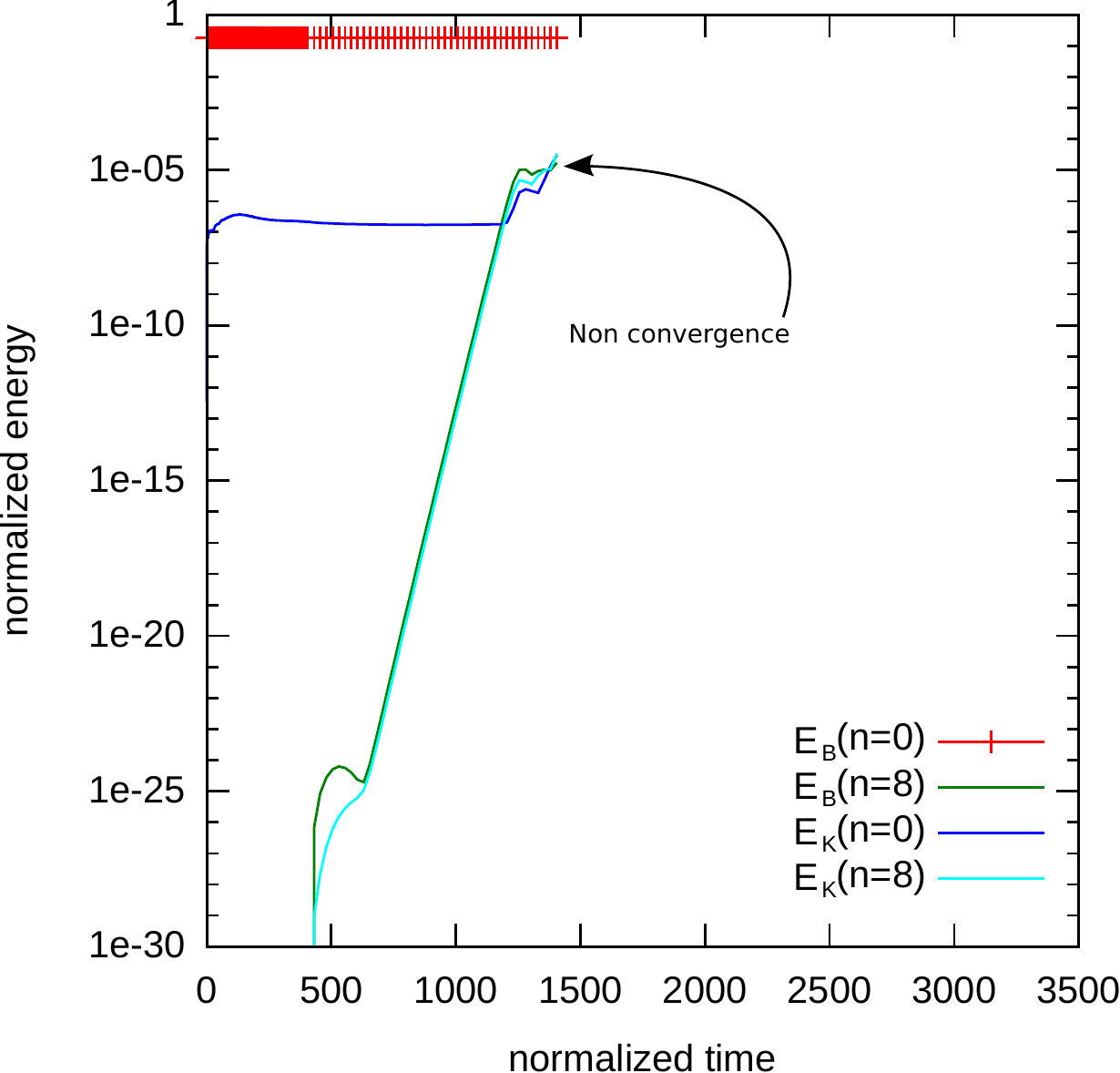}\hspace{30pt}\includegraphics[scale=.53]{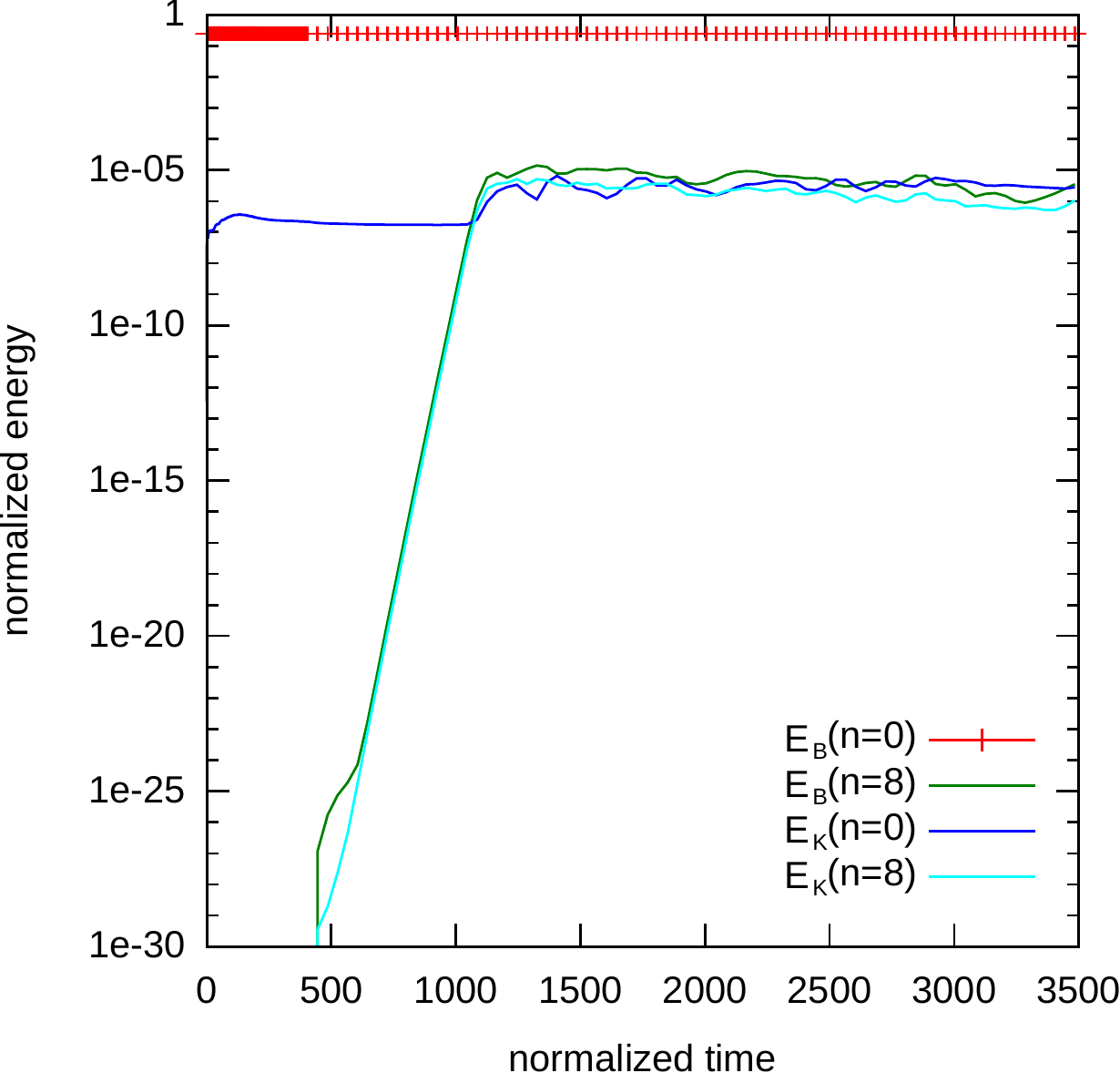}
\end{center}
  \caption{In the left Kinetic and Magnetic energies for Linearization method for $\Delta t=20,25$. In the left Kinetic and Magnetic energies for Newton method for $\Delta t=20, 40$}
\end{figure}

 For all these results we recompute the factorization for the preconditioning at each time step. For the Newton method we have added an additional rule. The factorization is recomputed if the convergence is too slow for the previous linear step. To reduce the CPU time we can use only the second rule for the Newton procedure and the Linearization method. In this case, it is not necessary to compute the factorization for each time step. The different test cases show that for the Newton procedure it will be important to use a smaller $\eps_0$ (initial $\eps$ for the GMRES method in the Inexact Newton procedure) to compute correctly the first Newton iteration.
 
 The last remark about this result is on the difference between the model with and without neglected terms. These terms in the potential and parallel velocities equations come from to the fact  the  poloidal and parallel velocity are not perpendicular, this is the cross terms between the poloidal velocity and the poloidal part of the parallel velocity. In the (Fig 7.) we remark that we have small differences in the dynamics of kinetic and magnetic energies  between the models with and without neglected terms. We observe these differences for the linearization method with $\Delta t=20 $ and for the Newton methods with $\Delta t=40$. With the Newton procedure and $\Delta t=20$ the difference is smaller. In theory these terms are small consequently it is expected that the impact of these terms is small when the numerical error (Time and linearization errors) is small. When the error is larger (Linearization method with $\Delta t=20$, Newton method with $\Delta t=40$) the impact of these terms is more important.  However the impact of these additional terms on the stability, conditioning  and convergence issues is not clear and requires additional studies for exemple when the resistivity and viscosity are close to zero.
 \begin{figure}[t]
\begin{center}
    \includegraphics[scale=.58]{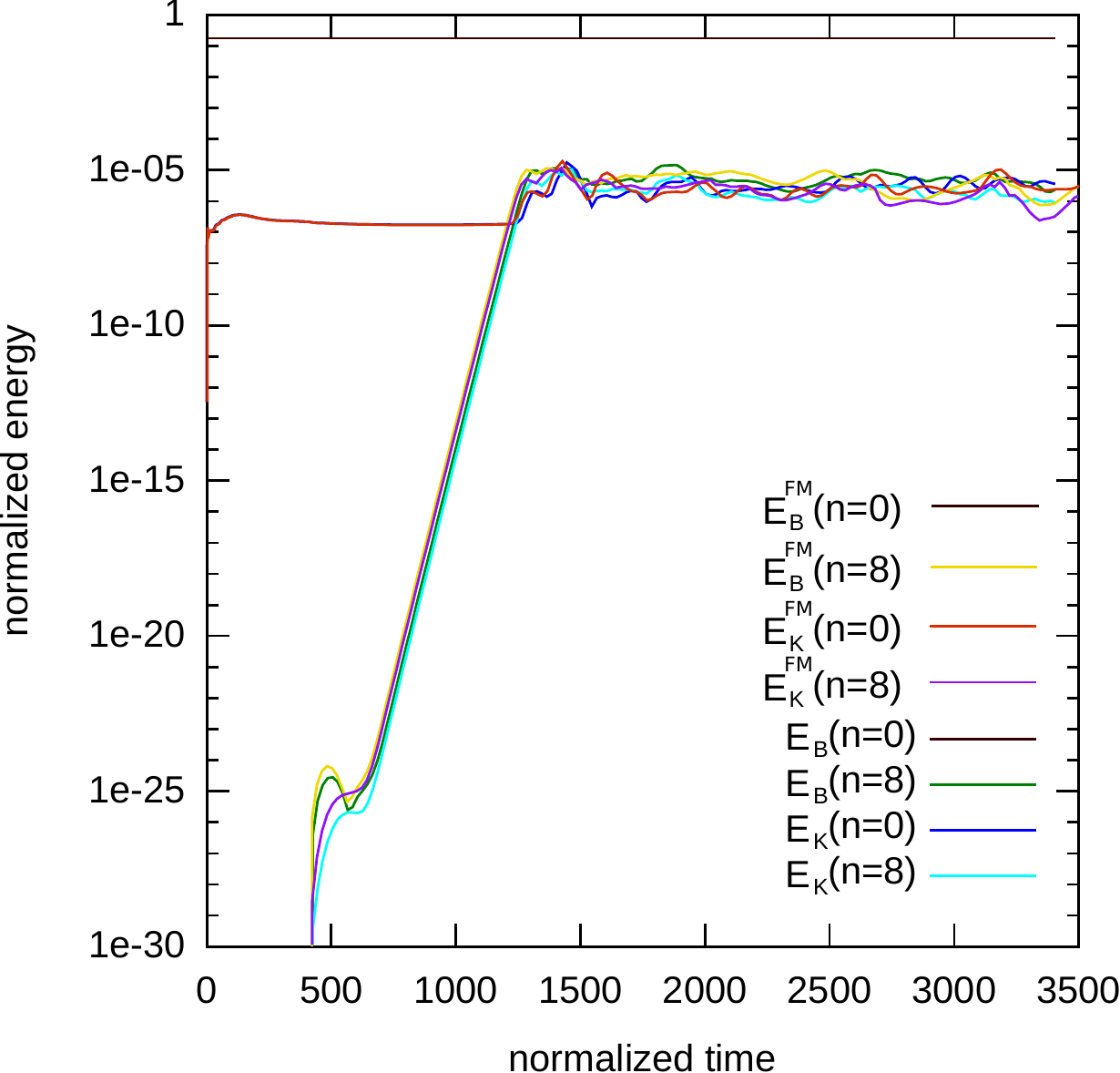}\hspace{10pt}\includegraphics[scale=.53]{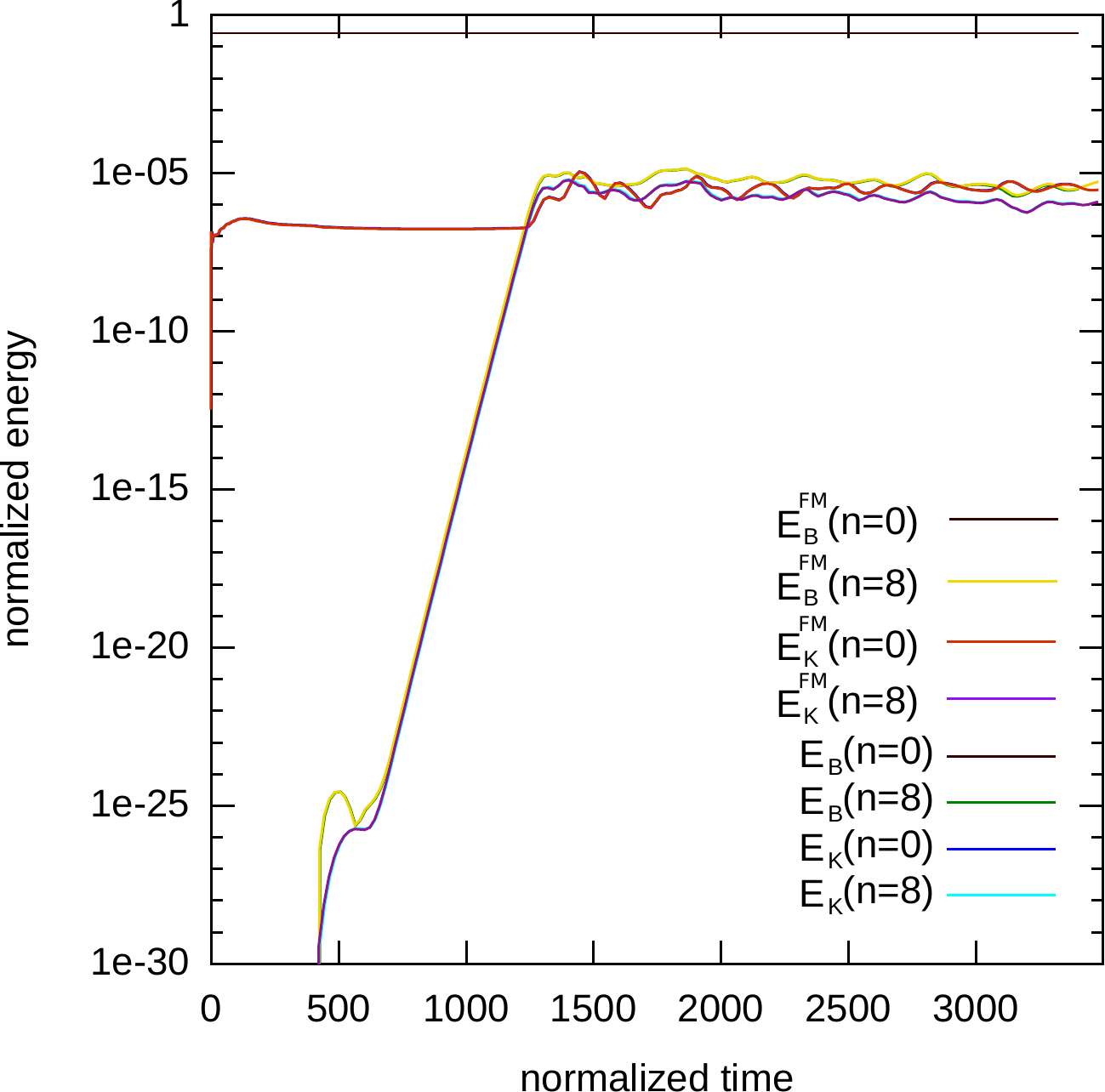}\hspace{10pt}
    \end{center}
    \begin{center}
    \includegraphics[scale=.58]{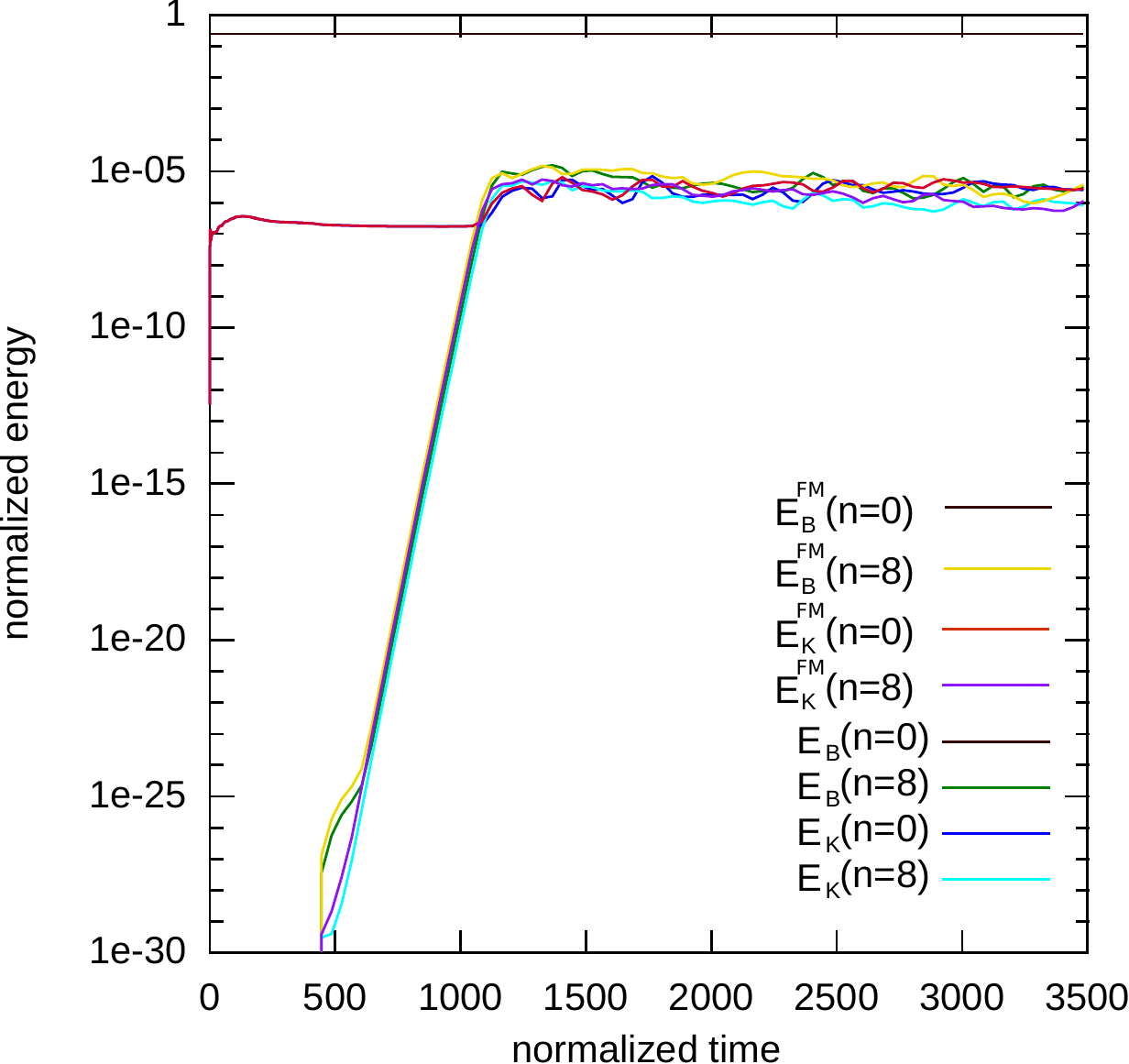}
\end{center}
  \caption{Comparaison between the full model (model with neglected terms) and the model without neglected terms. In the top and left results given by the Linearization method with $\Delta t=20$, in the top and right, results given by the Newton method with $\Delta t=20$ and in bottom results given by the Newton method with $\Delta t=40$.}
\end{figure}

\section{Conclusion}
In this paper, we have presented a rigorous analytical derivation of
the reduced MHD models implemented in the non-linear MHD code JOREK.
Starting from the potential formulation of the magnetic field vector
and fluid velocity used in JOREK we obtain a few additional terms that
have been neglected in the code but might be relevant in the
non-linear phase. We have also given a proof of the conservation (in the ideal case) or dissipation (in the resistive and viscous cases) of
total energy for this reduced MHD model if the additional terms are
taken into account. This is an important validation for the choices of
the projections and the assumptions of the derivation. Indeed we obtain an energy estimate close the energy estimate associated with the full MHD.  At the numerical level it is important to have a stable model (here we consider the dissipation of the energy as a first stability result  for the model). Indeed it is not possible to certify the numerical stability (decay of the norm or of  the energy) and the good behavior of the numerical methods if it is not the case for the continuous model. The numerical results do not show large differences between our model derived previously with the dissipative energy estimate and the model implemented initially in JOREK  which does not preserve this energy balance estimate. Perhaps because at the numerical level the discrete energy decay is not yet satisfied exactly under all circumstances (time scheme not adapted, negative density or wrong viscous coefficient can  explain this). However, now we have a model with a good energy  balance law  which makes possible the design of numerical method that are stable and robust. In the future we would like to study the derivation of the reduced MHD with the bi-fluid effects, with more physical stress tensors \cite{ionplasmas}.\\\\
The second part of the paper is focused on the time solver of JOREK.
The original method used in JOREK for the time-stepping of the
nonlinear system is a linearization solved iteratively by GMRES with
physics-based preconditioning. We have replaced this by the nonlinear
inexact Newton method in which the linear convergence accuracy of
GMRES depends on the non-linear convergence. Especially at the onset of
non-linear saturation, large numerical errors can cause numerical
instabilities and prevent convergence. The non-linear time stepping
reduces those errors and consequently allows to use larger time steps
as confirmed by numerical tests. We have also implemented and tested
an adaptive time stepping that works very efficiently with the Newton
method and allows to reduce computational costs. The Newton method is
more robust than the linearization method as it avoids certain
numerical instabilities, is well suited for adaptive time stepping,
and allows to reduce computational costs in highly non-linear cases.
The Newton method is currently implemented for the single fluid
reduced MHD equations in JOREK, and will be extended to two-fluid
terms and further extended models in the future.

\end{document}